\documentclass[12pt, reqno]{amsart}
\usepackage{amsmath,amssymb,amsthm}
\allowdisplaybreaks
\usepackage{amsmath, amssymb}
\usepackage{amsthm, amsfonts, mathrsfs}
\usepackage{mathptmx}
\usepackage{fullpage}
\usepackage{amsfonts,graphicx}
\numberwithin{equation}{section}
\usepackage[colorlinks=true, pdfstartview=FitV, linkcolor=blue, citecolor=blue, urlcolor=blue]{hyperref}

\newtheorem{theo}{Theorem}[section]
\newtheorem{lemm}[theo]{Lemma}

\newtheorem{coro}[theo]{Corollary}
\newtheorem{prop}[theo]{Proposition}
\newtheorem{rema}[theo]{Remark}
\numberwithin{equation}{section}

\newcommand{\R}{{\mathbb R}}

\newcommand\Z{{\mathbb{Z}}}

\usepackage{xcolor}

\begin{document}
\title{Global regularity and sharp decay rates for the one-dimensional hypo-viscous compressible Navier-Stokes equations in critical Besov spaces}

\author{Chen Liang}

\address{ School of Science,
	Shenzhen Campus of Sun Yat-sen University, Shenzhen 518107, China}

\email{liangch89@mail2.sysu.edu.cn}

\author{Zhaonan Luo}
\address{School of Science,
 Shenzhen Campus of Sun Yat-sen University, Shenzhen 518107, China}

\email{luozhn7@mail.sysu.edu.cn}

\author{Zhaoyang Yin}

\address{ School of Science,
Shenzhen Campus of Sun Yat-sen University, Shenzhen 518107, China}

\email{mcsyzy@mail.sysu.edu.cn }


\subjclass[2020]{35Q30, 35Q35, 35B40}

\keywords{compressible Navier-Stokes equations; hypo-viscosity; critical Besov spaces; optimal time decay rates.}

\begin{abstract}
In this paper, we study the global regularity for the one-dimensional hypo-viscous compressible Navier--Stokes equations in the critical Besov spaces, where the initial data are close to the constant equilibrium state. We introduce an effective velocity, constructed as a combination of the density and the velocity, which is used to capture the high-frequency behavior of $a$. By performing elaborate high-frequency and low-frequency estimates together with a bootstrap argument, we obtain global existence for $\frac{1}{2}\le\beta<1$. In particular, when $\frac{1}{2}\le\beta<\frac{3}{4}$, we also prove the uniqueness of solutions. Furthermore, under an additional low-frequency assumption, we establish the optimal time decay rates.

\end{abstract}

\maketitle

\section{Introduction}
In this paper, we consider the following one-dimensional compressible Navier--Stokes (CNS) equations \cite{MR5057982,MR4643428}:
\begin{align}\label{eq0}
\left\{\begin{aligned} 
&\partial_{t}\rho+(\rho u)_{x}=0,\\ 
&\partial_{t}(\rho u)+\mu(-\Delta)^{\beta}u+(\rho u^{2}+P(\rho))_{x}=0,\\
&\rho|_{t=0}=\rho_0,~~u|_{t=0}=u_0,
\end{aligned}\right.
\end{align}
where $\rho$ and $u$ denote the density and the velocity, respectively, $P=\rho^{\gamma}\ (\gamma\ge1)$ is the pressure, and $\mu>0$ is the viscosity coefficient.
  The fractional Laplacian operator $(-\Delta)^{\beta}u$ is defined via the Fourier transform
	$$
	\widehat{(-\Delta)^{\beta}u}=|\xi|^{2\beta}\widehat{u},\quad\quad \widehat{u}(\xi,t)=\int_{\mathbb{R}}e^{-ix\xi}u(x,t) dx.
	$$
Fractional diffusion can precisely characterize physical phenomena such as jump L\'evy processes \cite{MR3185174}, radiation dynamics \cite{MR1031939}, and porous media mechanics \cite{MR2737788,MR2954615}. For simplicity, we set $\mu=1$, $\gamma=1$ and $\rho=a+1$. Then $(a,u)$ takes the form
\begin{align}\label{eq1}
\left\{\begin{aligned} 
&\partial_{t}a+u_x=F,\\ 
&\partial_{t}u+(-\Delta)^{\beta}u+a_x=H,\\
&a|_{t=0}=a_0,~~u|_{t=0}=u_0,
\end{aligned}\right.
\end{align}
where
$$
F=-(au)_x,
$$
and 
$$
H=\underbrace{\left(\frac{a}{1+a}+\frac{P^{\prime}(1)- P^{\prime}(1+a)}{1+a}\right)}_{K(a)}a_x-uu_x+\underbrace{\frac{a}{1+a}}_{G(a)}(-\Delta)^{\beta}u. \\
$$
When $\beta=1$, system (\ref{eq0}) is the classical CNS system. Y. I. Kanel' \cite{MR227619,Kanel1979} solved the Cauchy problem (\ref{eq0}) in Lagrangian coordinates. A. Kazhikhov and V. Shelukhin \cite{KAZHIKHOV1977273} proved global existence and uniqueness in $H^1$. Moreover, A. Kazhikhov \cite{Kazhikhov1979} and S. Kawashima and T. Nishida \cite{MR637519} studied the large-time behavior of system (\ref{eq0}). D. Hoff \cite{DavidHoff} proved global existence and uniqueness when the initial density and velocity belong to $L^2$ and $L^2\cap BV$, respectively. J. Li and Z. Liang \cite{Li2016Some} proved that the solutions of the CNS equations tend to $0$ in the $L^p$ norm. However, they did not provide an explicit decay rate. Subsequently, Y. Chen, M. Li, Q. Yao and Z. Yao \cite{MR4549954} obtained the optimal time decay rates in $H^2$ by virtue of spectral analysis and the Fourier splitting method. More results on time decay rates can be found in \cite{MR1671920,MR971685,MR1890882,MR2812966}.

When $0<\beta<1$, Y. Li, P. Qu, Z. Zeng, and D. Zhang \cite{MR5057982} proved that there exist infinitely many weak solutions for the hypo-viscous CNS system. S. Wang and S. Zhang \cite{MR4643428} considered the $L^2$ time decay rate of (\ref{eq0}) for the case $\frac{1}{2}<\beta\le1$ in $\mathbb{R}^3$, where the initial data belong to $H^{4}\cap L^{1}$. Moreover, S. Wang and S. Zhang \cite{MR4897629} obtained the time decay rate for the 3D Navier--Stokes-Poisson system with fractional dissipation, which can be regarded as an extension of \cite{MR4643428}. Very recently, C. Liang, Z. Luo and Z. Yin \cite{liang2026} established global regularity and optimal time decay rates for the 2D hypo-viscous CNS system with $\frac{1}{2}\le\beta<1$.

To the best of our knowledge, the global regularity for system (\ref{eq0}) with $\frac{1}{2}\le\beta<1$ has not been studied in the critical Besov spaces. Indeed, there exist critical results for the classical compressible Navier--Stokes equations in high dimensions. In the $L^2$ framework, R. Danchin \cite{MR1855277,MR2354497} proved the local well-posedness when $a_0\in\dot B_{2,1}^{\frac{d}{2}}$ and $u_0\in\dot B_{2,1}^{\frac{d}{2}-1} $, where $d\ge2$ denotes the spatial dimension. Moreover, R. Danchin \cite{MR1779621} obtained global regularity if $a_0\in\dot B_{2,1}^{\frac{d}{2}}\cap\dot B_{2,1}^{\frac{d}{2}-1}$ and $u_0\in\dot B_{2,1}^{\frac{d}{2}-1} $. For the general $L^p$ framework, F. Charve and R. Danchin \cite{MR2679372}, Q. Chen, C. Miao and Z. Zhang \cite{MR2675485}, B. Haspot \cite{MR2847531} proved the global well-posedness for the CNS system if $a_0\in\dot B_{p,1}^{\frac{d}{p}}$, $u_0\in\dot B_{p,1}^{\frac{d}{p}-1}$ and $(a,u)^l\in\dot B_{2,1}^{\frac{d}{2}-1}$, where $2\le p\le\min(4,\frac{2d}{d-2})$ and $p\neq 4,d=2$. Based on this, our first goal in this paper is to establish global regularity in the critical $L^2$ framework.

Similar to the incompressible Navier--Stokes equations with fractional dissipation, we observe that if $(a,u)$ is a solution of (\ref{eq1}), then
 $$(a_\lambda, u_\lambda)\triangleq\left(a(\lambda^{2\beta}t,\lambda x), \lambda^{2\beta-1}u(\lambda^{2\beta}t,\lambda x)\right)$$ 
 is also a solution of (\ref{eq1}) provided that the function $P$ is replaced by $\lambda^{4\beta-2}P$. As in the case of the incompressible Navier--Stokes equations, a critical space for (\ref{eq1}) should be one whose norm is invariant under $(a,u)\mapsto(a_\lambda,u_\lambda)$, up to a constant independent of $\lambda$. 
The homogeneous Sobolev space $\dot H^{\frac12}\times\dot H^{\frac32-2\beta}$ seems to be a natural candidate. However, we need a $L^\infty$ control of $a$, and the homogeneous Sobolev space is not embedded in $L^\infty$.
Hence, we work in the critical homogeneous Besov space:
$$\dot B_{2,1}^{\frac{1}{2}}\times\dot B_{2,1}^{\frac{3}{2}-2\beta}.$$
We observe that when $\frac{1}{2}\le\beta<1$, $\frac{3}{2}-2\beta>-\frac{1}{2}$. Therefore, by Bony's paraproduct estimates, the nonlinear terms can be controlled in the critical Besov spaces. It should be noted that $\beta=1$ is a difficult case because there are no product laws for the homogeneous Besov space $\dot B^{-\frac{1}{2}}_{2,1}$. Recently, R. Danchin \cite{danchin2026} proved the global regularity and uniqueness for 1D CNS equations with $\beta=1$ by means of the mass Lagrangian coordinates system.

 To further analyze the effects of $(a,u)$, we perform a spectral analysis of the linearized equation. Let us first study the following linear system:
\begin{align}\label{eq2}
\left\{\begin{aligned} 
&\partial_{t}a+u_x=0,\\ 
&\partial_{t}u+(-\Delta)^{\beta}u+a_x=0,\\
&a|_{t=0}=a_0,~~u|_{t=0}=u_0.
\end{aligned}\right.
\end{align}
Taking the Fourier transform of (\ref{eq2}), we have
\begin{align*}
\left\{\begin{aligned} 
&\partial_{t}\widehat{a}+i\xi\widehat{u}=0,\\ 
&\partial_{t}\widehat{u}+|\xi|^{2\beta}\widehat{u}+i\xi\widehat{a}=0.
\end{aligned}\right.
\end{align*}
It is clear that
\begin{align}
\frac{d}{dt}
\begin{pmatrix}
        \widehat{a} \\ \widehat{u}
    \end{pmatrix}
    =
 \begin{pmatrix}
       0 & -i\xi \\ -i\xi& -|\xi|^{2\beta} 
    \end{pmatrix}   
\begin{pmatrix}
        \widehat{a} \\ \widehat{u}
    \end{pmatrix}
    \triangleq L    
    \begin{pmatrix}
        \widehat{a} \\ \widehat{u}
    \end{pmatrix}
    .
\end{align}
The eigenvalues $\lambda_{\pm}$ of $L$ can be computed directly. We consider two distinct cases in the following discussion.

\textbf{Case 1:} $\frac{1}{2}<\beta<1$. In the low-frequency $|\xi|^{4\beta}-4|\xi|^2\le 0$, we have
\begin{align*}
\lambda_{\pm}=\frac{-|\xi|^{2\beta}\pm i\sqrt{4|\xi|^{2}-|\xi|^{4\beta}}}{2}.
\end{align*}
For high frequency, when $|\xi|^{4\beta}-4|\xi|^2>0$, we have
\begin{align*}
\lambda_{\pm}=\frac{-|\xi|^{2\beta}\pm\sqrt{|\xi|^{4\beta}-4|\xi|^{2}}}{2}.
\end{align*}
From this, a parabolic behavior occurs at low frequencies. For the high-frequency part, by virtue of Taylor's expansion, there exists a parabolic effect for $(a,u)$. More precisely, $a$ behaves like the heat kernel operators $(-\Delta)^{1-\beta}$ and $u$ has a parabolic behavior like $(-\Delta)^{\beta}$. 

\textbf{Case 2:} $\beta=\frac{1}{2}$. It is easy to see 
\begin{align*}
\lambda_{\pm}=\frac{-|\xi|\pm i\sqrt{3}|\xi|}{2}.
\end{align*}
Hence, $(a,u)$ has a parabolic behavior like $(-\Delta)^{\frac{1}{2}}$ in the whole space.

This shows the distinction between the case $\beta=1$ and the case $\frac{1}{2}\le\beta<1$. It follows that $a$ admits a damping effect at high frequencies when $\beta=1$, while $a$ has a parabolic effect at high frequencies when $\frac{1}{2}\le\beta<1$. In fact, this phenomenon has already been observed in our previous study of the CNS equations in Sobolev spaces, and we refer the reader to \cite{liang2026decay,liang2026}. 

In fact, the key to closing the energy is to capture the high-frequency effects. Inspired by \cite{MR2847531}, we introduce an effective velocity
$$w=u+\Lambda^{-2\beta}a_x.$$
The critical coupling helps us obtain the parabolic effect at high frequencies. Moreover, the combination of $a$ and $u$ enables us to deduce an estimate for each individual quantity. After delicate estimates and a bootstrap argument, we obtain the global energy estimate. 

Next, we consider the uniqueness of global solutions of system (\ref{eq1}). In contrast to the higher-dimensional ($d \ge 2$) setting, the 1D case is more challenging. It is well known that the convection term causes a loss of regularity. Hence, the regularity required to close the difference energy estimate is one order lower than that needed for the existence of solutions in the multi-dimensional case. However, if one attempts to follow the higher-dimensional approach, the difference energy estimate seems to close in the space $\dot{B}_{2,1}^{\frac{1}{2}-2\beta}\times\dot{B}_{2,1}^{-\frac{1}{2}}$. When $\beta>\frac{1}{2}$, we have $\frac{1}{2}-2\beta<-\frac{1}{2}$. Unfortunately, in this case, there is no product law, and consequently the nonlinear terms cannot be controlled. 

 When $\frac{1}{2}<\beta<\frac{3}{4}$, it follows from Theorem \ref{theo1} shows that the dissipation of $a^h$ is of order $\frac{5}{2}-2\beta>1$, which is enough to compensate the loss of one derivative caused by the convection term. The case $\beta = \frac{1}{2}$ is special. Through spectral analysis, the behaviors of $a$ and $u$ are consistent in both high and low frequencies. Hence, unlike the case $\frac{1}{2}<\beta<1$, we do not need to separate high and low frequencies to close the energy. Moreover, we can obtain not only uniqueness but also continuous dependence. 
 
We denote by $\mathcal{E}$ the set of functions $(a,u)$ which satisfy
 \begin{align*}
&(a,u)^l\in\widetilde C_{b}\left(\dot B^{\frac{3}{2}-2\beta}_{2,1}\right)\cap \widetilde L^1\left(\dot B^{\frac{3}{2}}_{2,1}\right),\quad a^{h}\in\widetilde C_{b}\left( \dot B^{\frac{1}{2}}_{2,1}\right)\cap \widetilde L^1\left(\dot B^{\frac{5}{2}-2\beta}_{2,1}\right),\\
& u^{h}\in\widetilde C_{b}\left(\dot B^{\frac{3}{2}-2\beta}_{2,1}\right)\cap \widetilde L^1\left(\dot B^{\frac{3}{2}}_{2,1}\right).
 \end{align*}
For the sake of convenience, we denote by $\mathcal{E}_{T}$ the space $\mathcal{E}$ restricted to functions over $[0,T]\times\mathbb{R}$. The precise definitions of these function spaces will be given in Section \ref{section2}. Denote $\Lambda^{s} f=\mathscr{F}^{-1}(|\xi|^sf)$. In view of the above observations, our first result is stated as follows:
\begin{theo}\label{theo1}
  Let $\frac{1}{2}\le\beta<1$. There exists a small positive constant $c$ such that if $a_0$ is in $\dot B_{2,1}^{\frac{3}{2}-2\beta}\cap\dot B_{2,1}^{\frac{1}{2}}$, $u_0$ is in $\dot B_{2,1}^{\frac{3}{2}-2\beta}$ and
  \begin{align*}
\|(a_0,u_0)\|^l_{\dot B^{\frac{3}{2}-2\beta}_{2,1}}+&\|a_0\|^{h}_{\dot B^{\frac{1}{2}}_{2,1}}+\|u_0\|^{h}_{\dot B^{\frac{3}{2}-2\beta}_{2,1}}\le c,
\end{align*}
then system (\ref{eq1}) admits a global-in-time solution $(a,u)$ belonging to $\mathcal{E}$.
Moreover, when $\frac{1}{2}<\beta<1$, the following energy inequality holds:
\begin{align*}
\frac{d}{dt}\Bigg[\|(a,u)^l\|_{\dot B^{\frac{3}{2}-2\beta}_{2,1}}+&\|(\Lambda^{2\beta-1}a,u)\|^{h}_{\dot B^{\frac{3}{2}-2\beta}_{2,1}}\Bigg]\\\nonumber
&+c_0\left(\|\Lambda a\|^{h}_{\dot B^{\frac{3}{2}-2\beta}_{2,1}}+\|u\|^{h}_{\dot B^{\frac{3}{2}}_{2,1}}+\|(a,u)^l\|_{\dot B^{\frac{3}{2}}_{2,1}}\right)
\le 0,
\end{align*}
where $c_0>0$ is a small enough constant. If $\beta=\frac{1}{2}$, one also has
\begin{align*}
\frac{d}{dt}\|(a,u)\|_{\dot B^{\frac{1}{2}}_{2,1}}+c_0\|(a,u)\|_{\dot B^{\frac{3}{2}}_{2,1}}
\le 0.
\end{align*}
Furthermore, if $\frac{1}{2}\le\beta<\frac{3}{4}$, the solution is unique. When $\beta=\frac{1}{2}$, system (\ref{eq1}) is globally well-posed in $\dot B^{\frac{1}{2}}_{2,1}$.
\end{theo}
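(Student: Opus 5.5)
The proof will follow the Danchin/Haspot scheme. We first build approximate solutions, either by Friedrichs' method (spectral truncation $\dot S_n$ of the system) or by a standard local existence result. We then prove a priori estimates in $\mathcal{E}_T$ that are uniform in $n$ and $T$. Finally we pass to the limit by compactness: Aubin--Lions on bounded sets, together with the Fatou property of Besov norms. Everything therefore reduces to establishing the two differential inequalities in the statement for smooth solutions, with a right-hand side $C\mathcal{X}(t)\mathcal{D}(t)$ added, where
\[
\mathcal{X}=\|(a,u)^l\|_{\dot B^{\frac32-2\beta}_{2,1}}+\|(\Lambda^{2\beta-1}a,u)\|^h_{\dot B^{\frac32-2\beta}_{2,1}}
\]
and $\mathcal{D}$ is the dissipation. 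A bootstrap on $\sup_t\mathcal{X}\le 2Cc$ then absorbs these terms and closes the estimate.

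\textbf{Low frequencies ($j\le j_0$).} We apply $\dot\Delta_j$ to (\ref{eq1}) and use the hypocoercive functional
\[
\|\dot\Delta_j(a,u)\|_{L^2}^2+\eta\,2^{-2j(1-\beta)}(\dot\Delta_j u,\partial_x\dot\Delta_j a).
\]
The cross term produces $2^{2j}\|\dot\Delta_j a\|^2$ times $2^{-2j(1-\beta)}$, that is, the damping $2^{2j\beta}\|\dot\Delta_j a\|^2$. This matches the parabolic symbol $|\xi|^{2\beta}$ for $|\xi|\lesssim1$ given by the spectral analysis. We obtain
\[
\frac{d}{dt}\|\dot\Delta_j(a,u)\|+c2^{2j\beta}\|\dot\Delta_j(a,u)\|\lesssim\|\dot\Delta_j(F,H)\|.
\]
At low frequencies $2^{2j\beta}\ge 2^{2j}$ up to constants, which gives the $\dot B^{\frac32}$ dissipation.

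\textbf{High frequencies.} Here we use the effective velocity $w=u+\Lambda^{-2\beta}a_x$. It satisfies
\[
\partial_t w+\Lambda^{2\beta}w=H+\Lambda^{-2\beta}\partial_xF-\Lambda^{-2\beta}\partial_x u_x ,
\]
and since $\Lambda^{-2\beta}\partial_{xx}u=-\Lambda^{2-2\beta}u$ is lower order at high frequency ($2-2\beta<2\beta$ requires $\beta>\frac12$), it is absorbed. Substituting $u=w-\Lambda^{-2\beta}a_x$ into the $a$ equation gives
\[
\partial_t a+\Lambda^{2-2\beta}a=-w_x+F,
\]
which is a $(-\Delta)^{1-\beta}$ heat equation. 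This yields $a^h\in\widetilde L^1(\dot B^{\frac52-2\beta})$ with $a^h$ measured in $\dot B^{\frac12}$. Coupling the two estimates with a small weight on $a$ gives the stated inequality.

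\textbf{Main obstacle: the transport term $u a_x$ in $F$.} It loses one derivative in $a^h$. We handle it with the commutator estimate $[\dot\Delta_j,u]\partial_x a$ together with integration by parts, which gives
\[
\tfrac12\int u_x|\dot\Delta_j a|^2\lesssim\|u_x\|_{L^\infty}\|\dot\Delta_j a\|^2 ,
\]
and $\|u_x\|_{L^\infty}\in L^1_t$ via $\dot B^{\frac32}_{2,1}$. The remaining products, namely $G(a)\Lambda^{2\beta}u$, $K(a)a_x$, $uu_x$ and $a u_x$, are controlled by Bony decomposition and composition estimates for $G$ and $K$. The lowest regularity index involved is $\frac32-2\beta>-\frac12$, so the product laws $\dot B^{\frac12}\times\dot B^{s}\to\dot B^{s}$ for $s>-\frac12$ apply. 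One also has to check the interpolation $\widetilde L^2$ bounds needed in the high–low cross terms. For $\beta=\frac12$ no splitting is needed: the functional $\|\dot\Delta_j(a,u)\|^2+\eta\,2^{-j}(\dot\Delta_j u,\partial_x\dot\Delta_j a)$ works at every $j$ and gives the second inequality.

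\textbf{Uniqueness.} Let $\delta a=a_1-a_2$ and $\delta u=u_1-u_2$, and estimate them in a space one derivative lower in $a$. For $\frac12<\beta<\frac34$ we take $\delta a\in\dot B^{-\frac12}$ and $\delta u\in\dot B^{\frac12-2\beta}$ at high frequency, and keep the low-frequency part at an index above $-\frac12$. The delicate term $u\,\partial_x\delta a$ is then paid for by $a^h\in\widetilde L^1(\dot B^{\frac52-2\beta})\subset \widetilde L^1(\dot B^{\frac32+})$ when $\beta<\frac34$. We conclude by Gronwall or an Osgood argument. For $\beta=\frac12$ the symmetric structure lets us estimate the difference in $\dot B^{-\frac12}_{2,\infty}$, and continuous dependence follows by a Bona--Smith type argument. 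I expect that choosing exponents so that every product law stays above $-\frac12$ in this difference estimate is the hardest step.
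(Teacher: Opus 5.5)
\textbf{What works.} Your existence argument and the two differential inequalities follow the paper essentially step by step:
\begin{itemize}
\item the same low-frequency hypocoercive functional (your weight $2^{-2j(1-\beta)}$ is the paper's $\Lambda^{2\beta-2}$);
\item the effective velocity $w=u+\Lambda^{-2\beta}a_x$, coupled with a small multiple of the $a$-energy and $j_0$ taken large;
\item the commutator treatment of $ua_x$;
\item Friedrichs approximation followed by compactness.
\end{itemize}
That part is fine.

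For $\beta=\frac12$, your difference estimate in $\dot B^{-\frac12}_{2,\infty}$ plus Bona--Smith is plausible but heavier than needed. At $\beta=\frac12$, $\delta a$ also gains a full derivative. So the paper closes $\frac{d}{dt}\widetilde E+c_0\widetilde D\le C\widetilde E D+C\widetilde D E$ directly in $\dot B^{\frac12}_{2,1}$ and gets Lipschitz dependence at once.

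\textbf{The gap: uniqueness for $\frac12<\beta<\frac34$.}

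First, your choice of $\delta a^h$ fails. If $\delta a^h$ is measured in $\dot B^{-\frac12}_{2,1}$, the $\Lambda^{2-2\beta}$ smoothing gives at most $\delta a^h\in\widetilde L^1(\dot B^{\frac32-2\beta}_{2,1})$. Hence $\partial_x\delta a^h$ has index $\frac12-2\beta<-\frac12$. Consider the remainder $R(K(a_1),\partial_x\delta a^h)$ in the pressure term $K(a_1)\partial_x\delta a$ of $\Phi$. For every time splitting $L^q_t\times L^{q'}_t$, its total regularity is
\[
\bigl(\tfrac12+\tfrac{2-2\beta}{q}\bigr)+\bigl(-\tfrac32+(2-2\beta)(1-\tfrac1q)\bigr)=1-2\beta<0.
\]
So the high--high interactions cannot be summed, whatever output index you aim for.

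Second, your choice of $\delta u^h$ fails in the same way. With $\delta u^h\in\dot B^{\frac12-2\beta}_{2,1}$, the term $G(a_1)(-\Delta)^\beta\delta u$ has total regularity $1-2\beta+\frac{2-4\beta}{q}<0$. This is exactly the obstruction the paper points out in its introduction. It also contradicts your own closing requirement that every product law stay above $-\frac12$.

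Third, your explanation of the threshold is wrong. The inequality $\frac52-2\beta>\frac32$ holds only for $\beta<\frac12$. So the inclusion $\widetilde L^1(\dot B^{\frac52-2\beta}_{2,1})\subset\widetilde L^1(\dot B^{\frac32+}_{2,1})$ fails on the whole range. What is true is $\frac52-2\beta>1$.

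\textbf{The paper's route.} The paper lowers regularity by only half a derivative. It works with
\[
\|(\delta a,\delta u)^l\|_{\dot B^{1-2\beta}_{2,1}}+\|(\Lambda^{2\beta-1}\delta a,\delta u)\|^h_{\dot B^{1-2\beta}_{2,1}},
\]
so that $\delta a^h\in\dot B^0_{2,1}$ and $\delta u\in\dot B^{1-2\beta}_{2,1}$.
\begin{itemize}
\item The condition $\beta<\frac34$ is then precisely $1-2\beta>-\frac12$.
\item This is what the product laws $\dot B^{1-2\beta}_{2,1}\times\dot B^{\frac12}_{2,1}\to\dot B^{1-2\beta}_{2,1}$ and $\dot B^{\frac32-2\beta}_{2,1}\times\dot B^{0}_{2,1}\to\dot B^{1-2\beta}_{2,1}$ require.
\item The transport term is handled by the commutator lemma with $s=0$.
\end{itemize}

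Be warned that the leftover term $\|\delta u\|_{\dot B^{\frac12}_{2,1}}\|a^h_{1,x}\|_{\dot B^0_{2,1}}$ in that argument is itself not settled.
\begin{itemize}
\item Interpolating $L^\infty_T(\dot B^{\frac32-2\beta}_{2,1})$ and $L^1_T(\dot B^{\frac52-2\beta}_{2,1})$ gives $a_1^h\in L^q_T(\dot B^1_{2,1})$ with $\frac1q=2\beta-\frac12$, not $\frac32-2\beta$ as written.
\item Hölder against $\delta u\in L^{\frac{4\beta}{4\beta-1}}_T(\dot B^{\frac12}_{2,1})$ needs $q\ge 4\beta$, and this $q$ is below $4\beta$ whenever $\beta>\frac12$.
\item Even the best exponent available, $4-4\beta$ from $L^\infty_T(\dot B^{\frac12}_{2,1})$, is below $4\beta$.
\end{itemize}
So that step cannot simply be copied either.
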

\begin{rema}
Let us emphasize that the well-posedness considered here, in the case $\beta=\frac{1}{2}$, is in the sense of Hadamard, i.e., the system (\ref{eq1}) admits a unique solution that depends continuously on the initial data in some function space $X$. The continuous dependence of solutions for (\ref{eq1}) in the case $\frac{1}{2}<\beta<1$ remains open.
\end{rema}
\begin{rema}
As pointed out by J-Y. Chemin, I. Gallagher and M. Paicu \cite{MR2776367}, Besov spaces with negative indices contain highly oscillatory functions. When $\frac{3}{2}-2\beta<0$, i.e. $\frac{3}{4}<\beta<1$, the Besov space $\dot B^{\frac{3}{2}-2\beta}_{2,1}$ contains highly oscillatory functions; an example in one dimension is
\begin{align*}
u_{\varepsilon}(x)=\sin \left(\frac{x}{\varepsilon}\right)\phi(x),
\end{align*}
where $\phi\in\mathcal{S}(\mathbb{R})$ and $\varepsilon>0$ is small enough. Therefore, our results can be applied to highly oscillatory initial data.
\end{rema}
\begin{rema}
There are several remaining problems. First, the uniqueness in the critical space is established only for $\frac{1}{2}\le\beta<\frac{3}{4}$. Whether uniqueness holds for $\frac{3}{4}\le\beta<1$ remains open. Indeed, for $\beta>\frac{3}{4}$ the critical index $\frac{3}{2}-2\beta$ is negative, and the difference energy would have to be closed at the index $1-2\beta<-\frac{1}{2}$, which is too negative for the product laws to be available. Secondly, the existence of local solutions for large initial data in the critical $L^2$ framework for system (\ref{eq1}) is still unknown. Furthermore, can we establish the local and global solvability of system (\ref{eq1}) in the general $L^p$ framework?
\end{rema}
Next, we want to study the time decay rates for (\ref{eq1}). Inspired by the work of \cite{MR3609245,MR4188989,Xu2019,MR3360739}, we additionally assume that the initial data $(a_0,u_0)$ belong to the Besov space $\dot B_{2,\infty}^{-\sigma}$ with $2\beta-\frac{3}{2}<\sigma\le\frac{1}{2}$. In fact, the key to obtaining the optimal time decay rate is to prove the propagation of regularity, that is, we need to show that the solution $(a,u)$ also belongs to $\dot B_{2,\infty}^{-\sigma}$. Then by interpolation between Besov spaces, we can derive a Lyapunov-type inequality. By this method, we obtain the optimal time decay rates. More precisely, our second result is the following:
\begin{theo}\label{theo2}
Let $\frac{1}{2}\le\beta<1$ and $2\beta-\frac{3}{2}<\sigma\le\frac{1}{2}$. Let $(a,u)$ be the global solution under the assumptions of Theorem \ref{theo1}. If additionally $(a_0,u_0)^{l}\in \dot B_{2,\infty}^{-\sigma}$ and
\begin{align*}
\|(a_0,u_0)\|^{l}_{\dot B_{2,\infty}^{-\sigma}}\le C,
\end{align*}
 then for any $-\sigma< s\le \frac{3}{2}-2\beta$, there holds 
\begin{align*}
\|\Lambda^s(a,u)\|_{L^2}\le C(1+t)^{-\frac{s+\sigma}{2\beta}}.
\end{align*}
\end{theo}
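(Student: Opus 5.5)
The plan follows the standard route of Guo--Wang and Xiong, adapted to Besov spaces as in Danchin--Xu and Xu. There are two ingredients. The first is the propagation of the low-frequency negative regularity $\dot B^{-\sigma}_{2,\infty}$ along the solution of Theorem \ref{theo1}. The second is a Lyapunov inequality, obtained from the energy inequality of Theorem \ref{theo1} by interpolation.

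\textbf{Step 1: uniform bound in $\dot B^{-\sigma}_{2,\infty}$.} Localize (\ref{eq1}) with $\dot\Delta_j$ for $j\le j_0$. At low frequencies the linear operator $L$ has eigenvalues with real part $\approx -|\xi|^{2\beta}$, so an $L^2$ energy estimate on each block gives
\begin{align*}
\|(a,u)(t)\|^l_{\dot B^{-\sigma}_{2,\infty}}\le \|(a_0,u_0)\|^l_{\dot B^{-\sigma}_{2,\infty}}+C\int_0^t\|(F,H)\|^l_{\dot B^{-\sigma}_{2,\infty}}\,d\tau .
\end{align*}
For a cross-term-free estimate, I would use the symmetrizer $\langle \dot\Delta_j a,\dot\Delta_j u_x\rangle$ weighted by $|\xi|^{2\beta-2}$ at low frequency. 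The nonlinear terms $au$, $uu_x$, $K(a)a_x$ and $G(a)\Lambda^{2\beta}u$ are handled with the product law
\[
\|fg\|_{\dot B^{-\sigma}_{2,\infty}}\lesssim \|f\|_{\dot B^{1/2}_{2,1}}\|g\|_{\dot B^{-\sigma}_{2,\infty}},\qquad -\tfrac12<\sigma\le\tfrac12 .
\]
Where a derivative must be absorbed, one factor goes in $L^1_t(\dot B^{\frac32}_{2,1})$ or $L^1_t(\dot B^{\frac52-2\beta}_{2,1})$, and the other factor is put in $\dot B^{-\sigma}_{2,\infty}$ through its low part and, via $\dot B^{\frac32-2\beta}$ or $\dot B^{\frac12}$, through its high part. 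The high part is harmless because $-\sigma<\frac32-2\beta$, so high frequencies embed into $\dot B^{-\sigma}_{2,\infty}$. This gives
\[
X(t):=\sup_{\tau\le t}\|(a,u)(\tau)\|^l_{\dot B^{-\sigma}_{2,\infty}}\le C_0+C\,\mathcal{E}(t)\,X(t)+C\,\mathcal{E}(t)^2 ,
\]
where $\mathcal{E}(t)\lesssim c$ is the energy of Theorem \ref{theo1}. Smallness then gives $X(t)\le C$ for all $t$.

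\textbf{Step 2: Lyapunov inequality.} Set
\[
\mathcal{L}(t)=\|(a,u)^l\|_{\dot B^{\frac32-2\beta}_{2,1}}+\|(\Lambda^{2\beta-1}a,u)\|^h_{\dot B^{\frac32-2\beta}_{2,1}},
\]
or $\|(a,u)\|_{\dot B^{1/2}_{2,1}}$ when $\beta=\frac12$. Since $-\sigma<\frac32-2\beta<\frac32$, low-frequency interpolation gives
\[
\|(a,u)^l\|_{\dot B^{\frac32-2\beta}_{2,1}}\lesssim \big(\|(a,u)^l\|_{\dot B^{-\sigma}_{2,\infty}}\big)^{1-\theta}\big(\|(a,u)^l\|_{\dot B^{\frac32}_{2,1}}\big)^{\theta},\qquad \theta=\frac{\frac32-2\beta+\sigma}{\frac32+\sigma}.
\]
At high frequencies the dissipated norms dominate $\mathcal{L}$ directly, because $\|\Lambda a\|^h_{\dot B^{\frac32-2\beta}}\gtrsim\|\Lambda^{2\beta-1}a\|^h_{\dot B^{\frac32-2\beta}}$ (as $2\beta-1<1$) and $\|u\|^h_{\dot B^{3/2}}\gtrsim\|u\|^h_{\dot B^{\frac32-2\beta}}$. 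Using Step 1, the dissipation is therefore bounded below by $c\,\mathcal{L}^{1/\theta}$. The energy inequality of Theorem \ref{theo1} then yields $\mathcal{L}'+c\mathcal{L}^{1+\frac{2\beta}{\frac32-2\beta+\sigma}}\le0$, hence
\[
\mathcal{L}(t)\lesssim(1+t)^{-\frac{\frac32-2\beta+\sigma}{2\beta}} .
\]

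\textbf{Step 3: all $s$.} For $-\sigma<s\le\frac32-2\beta$, I would bound $\|\Lambda^s(a,u)\|_{L^2}\lesssim\|(a,u)\|_{\dot B^s_{2,1}}$. The low part is interpolated between $\dot B^{-\sigma}_{2,\infty}$ (bounded) and $\dot B^{\frac32-2\beta}_{2,1}$ (decaying as above), which gives the factor $(1+t)^{-\frac{s+\sigma}{2\beta}}$. The high part is bounded by $\mathcal{L}$, which decays at least as fast since $s\le\frac32-2\beta$.

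\textbf{Main obstacle.} The hardest step is Step 1. Two terms need care: the quasilinear term $G(a)\Lambda^{2\beta}u$, which costs $2\beta$ derivatives, and $(au)_x$ when $\sigma$ is near $\frac12$. For these I would put $\Lambda^{2\beta}u$ or $u_x$ in $L^1_t(\dot B^{\frac12}_{2,1})$ and $a$ in $\dot B^{-\sigma}_{2,\infty}$. The first is available because $\frac32-2\beta\le\frac12\le\frac32$ for $u^l$, and $u^h\in L^1(\dot B^{3/2})$ covers the high part. If a borderline issue arises at $\sigma=\frac12$, I would use the endpoint product law into $\dot B^{-1/2}_{2,\infty}$, which holds for $\dot B^{1/2}_{2,1}\times\dot B^{-1/2}_{2,\infty}$.
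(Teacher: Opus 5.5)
Your three steps follow the paper's proof. Step 1 is Lemma~\ref{lemma4.1}. Step 2 is (\ref{420})--(\ref{422}), with the same exponent $1/\theta=\frac{3+2\sigma}{3+2\sigma-4\beta}$. At high frequencies you should say explicitly that passing from $D^h\gtrsim\mathcal{L}^h$ to $D^h\gtrsim(\mathcal{L}^h)^{1/\theta}$ uses $\mathcal{L}\le c<1$. Step 3 is the closing interpolation.

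The step that fails as written is the one you call the main obstacle, namely the high part of $G(a)\Lambda^{2\beta}u$. For $\beta>\frac12$ you cannot put $\Lambda^{2\beta}u^h$ in $L^1_t(\dot B^{\frac12}_{2,1})$. That requires $u^h\in L^1_t(\dot B^{2\beta+\frac12}_{2,1})$, which at high frequencies is strictly stronger than the dissipated norm $\dot B^{\frac32}_{2,1}$. Interpolating with the $L^\infty_t$ bound only lowers regularity, so it does not help.

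The fix is to swap the roles of the two factors:
\begin{itemize}
\item Since $2\beta-\sigma<\frac32$, you have $\|\Lambda^{2\beta}u^h\|_{\dot B^{-\sigma}_{2,\infty}}\lesssim\|u\|^h_{\dot B^{\frac32}_{2,1}}$, which lies in $L^1_t$.
\item $G(a)$ is small in $L^\infty_t(\dot B^{\frac12}_{2,1})$, because at low frequencies the $\dot B^{\frac32-2\beta}_{2,1}$ norm controls the $\dot B^{\frac12}_{2,1}$ norm.
\item The product law then gives $\|G(a)\Lambda^{2\beta}u^h\|_{\dot B^{-\sigma}_{2,\infty}}\lesssim\|a\|_{\dot B^{\frac12}_{2,1}}\|u\|^h_{\dot B^{\frac32}_{2,1}}$.
\end{itemize}
This is what the paper does, and it is one of the places where $\sigma>2\beta-\frac32$ is genuinely used in Step 1. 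Your recipe of putting $\Lambda^{2\beta}u$ in $L^1_t(\dot B^{\frac12}_{2,1})$ and $a$ in $\dot B^{-\sigma}_{2,\infty}$ is correct only for $u^l$.

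The term $a_x^h u$ in $(au)_x$ has the same problem for $\beta>\frac12$, and your general rule does not cover it. The available bounds put $a^h$ in $L^1_t(\dot B^{\frac52-2\beta}_{2,1})$, not $L^1_t(\dot B^{\frac32}_{2,1})$, so $a^h_x$ is not in $L^1_t(\dot B^{\frac12}_{2,1})$. They also do not put $u^h$ in $L^\infty_t(\dot B^{\frac12}_{2,1})$. So the endpoint law $\dot B^{\frac12}_{2,1}\times\dot B^{-\frac12}_{2,\infty}\to\dot B^{-\frac12}_{2,\infty}$ is needed for every admissible $\sigma$, not only at $\sigma=\frac12$.

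As in the paper, first bound $\|a_x^hu\|^l_{\dot B^{-\sigma}_{2,\infty}}$ by $\|a_x^hu\|_{\dot B^{-\frac12}_{2,\infty}}$. This is legitimate at low frequencies because $\sigma\le\frac12$. Then pair $a^h\in L^{2\beta}_t(\dot B^{\frac12}_{2,1})$ with $u\in L^{\frac{2\beta}{2\beta-1}}_t(\dot B^{\frac12}_{2,1})$. Alternatively, split $u=u^l+u^h$ and use an $L^1_t\times L^\infty_t$ pairing on each piece. Either way the contribution is quadratic in the energy and fits into your $C\mathcal{E}^2$ term. With these two corrections your argument coincides with the paper's.
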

\begin{rema}
The decay rates obtained in Theorem \ref{theo2} are optimal in the sense that they coincide with those of the corresponding generalized heat equation
\[
\left\{
\begin{aligned}
&\partial_t U+(-\Delta)^\beta U=0,
    \qquad x\in\mathbb{R},\quad t>0,\\
&U(x,0)=U_0\in\dot{B}^{-\sigma}_{2,\infty}.
\end{aligned}
\right.
\]
\end{rema}
\begin{rema}
 Note $L^{p}\hookrightarrow\dot{B}_{2, \infty}^{-\sigma}$, where $\frac{1}{p}=\sigma+\frac{1}{2}$. It follows that the above results still hold true when $\left(a_{0}, u_{0}\right)\in L^{p}$ and
  \begin{align*}
  \|(a_0,u_0)\|_{L^{p}}\le C.
  \end{align*} 
\end{rema}
\begin{rema}
Recently, L. Brandolese, L. Shou, J. Xu and P. Zhang \cite{MR4792291} proved that the $\dot B_{2,\infty}^{-\sigma}$-boundedness condition is sharp for obtaining optimal time decay rates in $\mathbb{R}^d$ with $d\ge 2$. A natural question is whether this conclusion holds in the one-dimensional case with $\frac{1}{2}\le\beta<1$.
\end{rema}

\textbf{Organization of the paper}
The rest of the paper is organized as follows. In Section \ref{section2}, we introduce some lemmas that will be used
in the sequel. In Section \ref{section3}, we present the proof of global regularity for system (\ref{eq1}) with small initial data.
In Section \ref{section4}, we prove the time decay rates for system (\ref{eq1}).
\section{Preliminary}\label{section2}
We now recall the Littlewood--Paley decomposition theory and Besov spaces.
\begin{lemm}\cite{Bahouri2011}\label{lemma2.1}
Let $\mathcal{C}$ be the annulus $\{\xi\in\mathbb{R}:\frac 3 4\leq|\xi|\leq\frac 8 3\}$. There exist radial functions $\chi$ and $\varphi$, valued in the interval $[0,1]$, belonging respectively to $\mathscr{D}(B(0,\frac 4 3))$ and $\mathscr{D}(\mathcal{C})$, and such that
$$ \forall\xi\in\mathbb{R},\ \chi(\xi)+\sum_{j\geq 0}\varphi(2^{-j}\xi)=1, $$
$$ \forall\xi\in\mathbb{R}\backslash\{0\},\ \sum_{j\in\mathbb{Z}}\varphi(2^{-j}\xi)=1, $$
$$ |j-j'|\geq 2\Rightarrow\mathrm{Supp}\ \varphi(2^{-j}\cdot)\cap \mathrm{Supp}\ \varphi(2^{-j'}\cdot)=\emptyset, $$
$$ j\geq 1\Rightarrow\mathrm{Supp}\ \chi(\cdot)\cap \mathrm{Supp}\ \varphi(2^{-j}\cdot)=\emptyset. $$
The set $\widetilde{\mathcal{C}}=B(0,\frac 2 3)+\mathcal{C}$ is an annulus, and we have
$$ |j-j'|\geq 5\Rightarrow 2^{j}\mathcal{C}\cap 2^{j'}\widetilde{\mathcal{C}}=\emptyset. $$
Further, we have
$$ \forall\xi\in\mathbb{R},\ \frac 1 2\leq\chi^2(\xi)+\sum_{j\geq 0}\varphi^2(2^{-j}\xi)\leq 1, $$
$$ \forall\xi\in\mathbb{R}\backslash\{0\},\ \frac 1 2\leq\sum_{j\in\mathbb{Z}}\varphi^2(2^{-j}\xi)\leq 1. $$
\end{lemm}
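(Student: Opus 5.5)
The plan is to construct $\chi$ and $\varphi$ from a single cutoff by a telescoping (dyadic difference) construction, and then to check each listed property by elementary support and size considerations.

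\textbf{Construction.} Fix a smooth radial function $\theta$ on $\mathbb{R}$ with values in $[0,1]$, equal to $1$ on $\{|\xi|\le \frac34\}$ and supported in $B(0,\frac43)$. Choose it with a nonincreasing radial profile, $|\xi|\mapsto\theta(\xi)$. Set
$$\chi(\xi)=\theta(\xi),\qquad \varphi(\xi)=\theta(\xi/2)-\theta(\xi).$$
Monotonicity gives $\varphi\ge0$, and since $0\le\theta\le1$ we get $\varphi\le1$. The function $\varphi$ vanishes where $|\xi|\le\frac34$, because there both terms equal $1$. It also vanishes where $|\xi|\ge\frac83$, because there both terms equal $0$. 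Hence $\varphi\in\mathscr D(\mathcal C)$.

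\textbf{Partition identities.} Telescoping gives
$$\chi(\xi)+\sum_{j=0}^{N}\varphi(2^{-j}\xi)=\theta(2^{-N-1}\xi).$$
This tends to $1$ as $N\to\infty$ for every fixed $\xi$, which proves the first identity. For $\xi\neq0$,
$$\sum_{j=-M}^{N}\varphi(2^{-j}\xi)=\theta(2^{-N-1}\xi)-\theta(2^{M}\xi),$$
and this tends to $1-0=1$, since $2^{M}|\xi|\to\infty$. This proves the second identity.

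\textbf{Support properties.} Each is a comparison of radii.
\begin{itemize}
\item For $|j-j'|\ge2$, say $j'\ge j+2$: the outer radius of $2^j\mathcal C$ is $2^j\cdot\frac83$, which is less than $2^{j+2}\cdot\frac34=3\cdot 2^{j}$, the inner radius of $2^{j'}\mathcal C$. So the supports are disjoint.
\item For $j\ge1$: $\operatorname{Supp}\chi\subset B(0,\frac43)$, and $\frac43<2\cdot\frac34$, so $\operatorname{Supp}\chi$ misses $2^j\mathcal C$.
\item The set $\widetilde{\mathcal C}=B(0,\frac23)+\mathcal C$ lies in $\{\frac1{12}\le|\xi|\le\frac{10}{3}\}$, so it is an annulus. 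If $j'\ge j+5$, the inner radius of $2^{j'}\widetilde{\mathcal C}$ is $2^{j}\cdot\frac{32}{12}=2^j\cdot\frac83$, which only equals the outer radius of $2^j\mathcal C$. If $j'\le j-5$, the comparison $2^{j-5}\cdot\frac{10}3<2^j\cdot\frac34$ is strict.
\end{itemize}

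\textbf{Main obstacle.} The only delicate point is the boundary case $j'\ge j+5$ in the last item, where the radii coincide. I would handle it by using that $\theta$ is supported in the \emph{open} ball $B(0,\frac43)$. Then $\varphi$ is compactly supported inside $\{\frac34<|\xi|<\frac83\}$, so all the support statements become strict. Equivalently, one may shrink $\mathcal C$ slightly, which is harmless for how the lemma is used.

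\textbf{Quadratic bounds.} The upper bound follows because all terms are nonnegative and sum to $1$, so the sum of squares is at most the square of the sum, which is $1$. For the lower bound, the support disjointness shows that at each $\xi$ at most two consecutive terms are nonzero, say $a$ and $b$ with $a+b=1$. Then
$$a^2+b^2\ge\tfrac12(a+b)^2=\tfrac12.$$
This gives both two-sided bounds.
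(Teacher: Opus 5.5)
Your telescoping construction is sound. Your verification of the two partition identities, of the disjointness statements for $\chi$ and $\varphi(2^{-j}\cdot)$, and of the quadratic bounds is correct.

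\textbf{The gap: the case $j'=j+5$.} Your resolution of this touching case in the $\widetilde{\mathcal C}$ statement does not work. The assertion $2^{j}\mathcal C\cap 2^{j'}\widetilde{\mathcal C}=\emptyset$ involves only the fixed sets $\mathcal C$ and $\widetilde{\mathcal C}$. Neither $\theta$ nor $\varphi$ appears in it, so knowing where $\varphi$ is supported cannot make it strict. Shrinking $\mathcal C$ proves a different statement.

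The missing observation is that $B(0,\frac23)$ is the open ball. For $|\eta|<\frac23$ and $\frac34\le|\zeta|\le\frac83$ one has $\frac1{12}<|\zeta|-|\eta|\le|\eta+\zeta|\le|\zeta|+|\eta|<\frac{10}3$. In fact $\widetilde{\mathcal C}=\{\frac1{12}<|\xi|<\frac{10}3\}$ exactly, and this is also what shows it is an annulus; being contained in one does not. Hence $2^{j'}\widetilde{\mathcal C}\subset\{|\xi|>2^{j}\cdot\frac83\}$ for $j'\ge j+5$, which misses $2^j\mathcal C$.

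The openness is essential. With a closed ball, the point $2^{j+5}(\frac34-\frac23)=2^j\cdot\frac83$ lies in both sets, and the claim fails for every choice of $\theta$. Separately, your side claim that $\varphi$ is supported in $\{\frac34<|\xi|<\frac83\}$ is right only at the outer radius. Strictness at $\frac34$ would require $\theta\equiv1$ on a neighbourhood of $\{|\xi|\le\frac34\}$, though nothing in the lemma needs it.

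\textbf{Comparison with the cited proof.} The paper gives no proof and cites \cite{Bahouri2011}, where the construction runs the other way. One takes a radial $\theta\in\mathscr{D}(\mathcal C)$ equal to $1$ near a slightly smaller annulus whose dyadic dilates cover $\mathbb{R}\setminus\{0\}$. One sets $S=\sum_{j\in\mathbb{Z}}\theta(2^{-j}\cdot)$, which is locally finite, positive, and invariant under $\xi\mapsto 2\xi$. Then $\varphi=\theta/S$, and only afterwards $\chi=1-\sum_{j\ge0}\varphi(2^{-j}\cdot)$.

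That route gets $\varphi\ge0$ and the homogeneous identity for free. It must then check separately that $\chi$ takes values in $[0,1]$ and is supported in $B(0,\frac43)$. Your route designs $\chi$ first and gets both identities by telescoping; the price is the monotone radial profile needed for $\varphi\ge0$. The first identity forces $\varphi=\chi(\cdot/2)-\chi$ in any case, so both constructions produce pairs of the same form.
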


$\mathscr{F}$ represents the Fourier transform and its inverse is denoted by $\mathscr{F}^{-1}$.
Let $u$ be a tempered distribution in $\mathcal{S}_{h}'(\mathbb{R})=\mathcal{S}'(\mathbb{R})/P(\mathbb{R})$. For all $j\in\mathbb{Z}$, define
$$
\dot\Delta_j u=\mathscr{F}^{-1}(\varphi(2^{-j}\cdot)\mathscr{F}u),\quad
\dot S_j u=\mathscr{F}^{-1}(\chi(2^{-j}\cdot)\mathscr{F}u).
$$
Then the Littlewood--Paley decomposition is given as follows:
$$ u=\sum_{j\in\mathbb{Z}}\dot\Delta_j u \quad \text{in}\ \mathcal{S}_{h}'(\mathbb{R}). $$
Let $s\in\mathbb{R},\ 1\leq p,r\leq\infty.$ The homogeneous Besov spaces $\dot B^s_{p,r}$ are defined by
$$ \dot B^s_{p,r}=\left\{u\in S'_{h}(\mathbb{R}):\|u\|_{\dot B^s_{p,r}}=\Big\|(2^{js}\|\dot\Delta_j u\|_{L^p})_j \Big\|_{l^r(\mathbb{Z})}<\infty\right\}.$$
For any positive time $T$, the time-space Besov spaces are defined by
\begin{align*}
L^{\rho}_T(\dot B^s_{p,r}) = \left\{u\in L^{\rho}(0,T;S'_{h}(\mathbb{R})):\|u\|_{L^{\rho}_T(\dot B^s_{p,r})} = \left\|\left\|2^{js}\|\dot\Delta_ju\|_{L^p}\right\|_{l^r({\mathbb{Z}})}\right\|_{L^{\rho}_T}<\infty\right\},
\end{align*}
and
\begin{align*}
\widetilde{L}^{\rho}_T(\dot B^s_{p,r}) = \left\{u\in L^{\rho}(0,T;S'_{h}(\mathbb{R})):\|u\|_{\tilde{L}^{\rho}_T(\dot B^s_{p,r})} = \left\|2^{js}\|\dot\Delta_ju\|_{L^{\rho}_T(L^p)}\right\|_{l^r({\mathbb{Z}})}<\infty\right\}.
\end{align*}
For simplicity, when $T=+\infty$, we denote by $L^{\rho}(\dot B^s_{p,r})$ and $\widetilde{L}^{\rho}(\dot B^s_{p,r})$ the spaces $L^{\rho}_{\mathbb{R}^{+}}(\dot B^s_{p,r})$ and $\widetilde{L}^{\rho}_{\mathbb{R}^{+}}(\dot B^s_{p,r})$, respectively. Moreover, the following embedding relationships hold:
$$
L^{\rho}_T(\dot B^s_{p,r})\hookrightarrow \tilde{L}^{\rho}_T(\dot B^s_{p,r}) ~~~~\text{if}~~~~r\geq\rho~~~~\text{and}~~~~
\tilde{L}^{\rho}_T(\dot B^s_{p,r})\hookrightarrow L^{\rho}_T(\dot B^s_{p,r})  ~~~~\text{if}~~~~r\leq\rho.$$
We shall also use the following functional spaces:
\begin{align*}
\widetilde{C}_T(\dot B^s_{p,r}) = \left\{u\in C(0,T;S'_{h}(\mathbb{R})):\|u\|_{\tilde{L}^{\infty}_T(\dot B^s_{p,r})} <\infty\right\}.
\end{align*}
\begin{align*}
\widetilde C_{b}(\dot B_{p,r}^{s})=\left\{u\in C(\mathbb{R}^{+}, \dot B_{p,r}^{s}):\|u\|_{\widetilde{L}^{\infty}(\dot B^s_{p,r})}<+\infty\right\}.
\end{align*}
In this paper, we restrict our attention to the case $p=2$. Because of the different behaviors at low and high frequencies, we introduce the following hybrid Besov spaces. Let us fix an integer $j_{0}$ (which will be determined in the proof of the main theorem) and set
$$\left\| u\right\| _{\dot{B}_{2,1}^{s}}^{\ell} \triangleq \sum_{j\leq
j_{0}}2^{js}\| \dot{\Delta}_{j}f\|_{L^{2}} \ \ \mbox{and} \ \ \|u\|_{\dot{B}_{2,1}^{s}}^{h}\triangleq \sum_{j\geq j_{0}-1}2^{js}\| \dot{\Delta}_{j}u\| _{L^{2}},$$
$$\left\|u\right\| _{\tilde{L}_{T}^{\infty} (\dot{B}_{2,1}^{s})}^{\ell} \triangleq
\sum_{j\leq j_{0}}2^{js}\|\dot{\Delta}_{j}u\|_{L_{T}^{\infty} (L^{2})} \ \
\mbox{and} \ \ \|u\| _{\tilde{L}_{T}^{\infty} (\dot{B}_{2,1}^{s})}^{h}\triangleq \sum_{j\geq j_{0}-1}2^{js}\| \dot{\Delta}_{j}u\|
_{L_{T}^{\infty} (L^{2})}.$$
The following equalities will be used frequently in our proof.
\begin{lemm}\cite{Bahouri2011}\label{lemma2.2}
Let $\mathscr{C}$ be an annulus and $\mathscr{B}$ a ball. There exists a constant $C$ such that for any $k\in\mathbb{N}$, $1\leq p\leq q\leq\infty$, and any function $u\in L^p(\mathbb{R}^d)$, we have
$$
\mathrm{Supp~}\widehat{u}\subset\lambda\mathscr{B}\Longrightarrow\|D^ku\|_{L^q}=\sup_{|\alpha|=k}\|\partial^\alpha u\|_{L^q}\leq C^{k+1}\lambda^{k+d(\frac{1}{p}-\frac{1}{q})}\|u\|_{L^p},
$$
$$
\mathrm{Supp~}\widehat{u}\subset\lambda\mathscr{C}\Longrightarrow C^{-k-1}\lambda^k\|u\|_{L^p}\leq\|D^ku\|_{L^p}\leq C^{k+1}\lambda^k\|u\|_{L^p}.
$$
\end{lemm}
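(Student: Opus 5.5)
The statement is Bernstein's inequality, and I would prove it by reducing to $\lambda=1$ via scaling and then writing $u$ (or its derivatives) as a convolution with a fixed Schwartz kernel. Concretely, if $\mathrm{Supp}\,\widehat u\subset\lambda\mathscr B$, set $u_\lambda(x)=u(x/\lambda)$, so that $\mathrm{Supp}\,\widehat{u_\lambda}\subset\mathscr B$. Since
\begin{align*}
\|\partial^\alpha u_\lambda\|_{L^q}=\lambda^{-|\alpha|-\frac dq}\|\partial^\alpha u\|_{L^q},\qquad \|u_\lambda\|_{L^p}=\lambda^{-\frac dp}\|u\|_{L^p},
\end{align*}
the general inequalities follow from the case $\lambda=1$, with exactly the powers $\lambda^{k+d(\frac1p-\frac1q)}$ and $\lambda^k$ claimed. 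The same rescaling handles the annulus case.

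\textbf{Ball case.} Fix $\theta\in\mathscr D(\mathbb R^d)$ with $\theta\equiv1$ on a neighbourhood of $\mathscr B$, and let $h=\mathscr F^{-1}\theta$. Then $u=h*u$, hence $\partial^\alpha u=(\partial^\alpha h)*u$. Young's inequality with $1+\frac1q=\frac1r+\frac1p$ (and $r\in[1,\infty]$ since $p\le q$) gives
\begin{align*}
\|\partial^\alpha u\|_{L^q}\le\|\partial^\alpha h\|_{L^r}\|u\|_{L^p}.
\end{align*}
Since $\|\cdot\|_{L^r}\le\|\cdot\|_{L^1}+\|\cdot\|_{L^\infty}$, it suffices to show
\begin{align*}
\|(1+|x|^2)^{d}\partial^\alpha h\|_{L^\infty}\le C^{k+1}.
\end{align*}
Write $(1+|x|^2)^d\partial^\alpha h=\mathscr F^{-1}\big[(\mathrm{Id}-\Delta_\xi)^d\big((i\xi)^\alpha\theta\big)\big]$ and bound it by the $L^1$ norm of the symbol. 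This is the only delicate point, because the constant must be geometric in $k$. At most $2d$ derivatives fall on $\xi^\alpha$, each producing a factor at most $k$, and on $\mathrm{Supp}\,\theta\subset B(0,R)$ we have $|\xi^\alpha|\le R^k$. So the symbol is bounded by $C_d\,k^{2d}R^k$, and $k^{2d}R^k\le C^{k+1}$ for a suitable $C$.

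\textbf{Annulus case.} The upper bound follows from the ball case, since an annulus lies in a ball; take $q=p$. For the lower bound, choose $\widetilde\varphi\in\mathscr D(\mathbb R^d\setminus\{0\})$ equal to $1$ near $\mathscr C$. Use the multinomial identity
\begin{align*}
|\xi|^{2k}=\sum_{|\alpha|=k}\frac{k!}{\alpha!}\,\xi^{2\alpha},
\end{align*}
and define $g_\alpha=\mathscr F^{-1}\big((-i\xi)^\alpha|\xi|^{-2k}\widetilde\varphi(\xi)\big)$. This gives the representation
\begin{align*}
u=\sum_{|\alpha|=k}\frac{k!}{\alpha!}\,g_\alpha*\partial^\alpha u,
\end{align*}
and therefore
\begin{align*}
\|u\|_{L^p}\le\Big(\sum_{|\alpha|=k}\frac{k!}{\alpha!}\|g_\alpha\|_{L^1}\Big)\sup_{|\alpha|=k}\|\partial^\alpha u\|_{L^p}.
\end{align*}
The bound $\|g_\alpha\|_{L^1}\le C^{k+1}$ comes from the same weighted-$L^\infty$ argument. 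Since $\widetilde\varphi$ is supported away from the origin, $|\xi|^{-2k}$ and its first $2d$ derivatives cost at most factors like $k^{2d}(4/3)^{2k}$ on the support. Together with $\sum_{|\alpha|=k}\frac{k!}{\alpha!}=d^k$, this yields $\|u\|_{L^p}\le C^{k+1}\|D^ku\|_{L^p}$, which is the lower bound at $\lambda=1$.

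\textbf{Main obstacle.} The only step needing care is the uniformity in $k$ of the kernel bounds. I expect the polynomial-times-geometric estimate $C_d\,k^{2d}R^k$ on the symbol derivatives to settle it.
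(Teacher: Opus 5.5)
Your proposal is the standard proof of Bernstein's lemma from Bahouri--Chemin--Danchin, which is the reference the paper cites (the paper itself gives no proof). The steps match: rescaling to $\lambda=1$, writing $\partial^\alpha u=(\partial^\alpha h)*u$ with $h$ the inverse Fourier transform of a cutoff, Young's inequality, the weighted $L^\infty$ bound through $(\mathrm{Id}-\Delta_\xi)^d$ with constants geometric in $k$, and the multinomial identity $|\xi|^{2k}=\sum_{|\alpha|=k}\frac{k!}{\alpha!}\xi^{2\alpha}$ for the reverse inequality.

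There is one slip in the rescaling step. With $u_\lambda(x)=u(x/\lambda)$ the correct identities are $\|\partial^\alpha u_\lambda\|_{L^q}=\lambda^{-|\alpha|+\frac dq}\|\partial^\alpha u\|_{L^q}$ and $\|u_\lambda\|_{L^p}=\lambda^{\frac dp}\|u\|_{L^p}$. The signs you wrote on $\frac dq$ and $\frac dp$ would give the power $\lambda^{k-d(\frac1p-\frac1q)}$ in the ball case when $p<q$, not the claimed one. The annulus case, where $p=q$, is unaffected.

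A minor point: for a general annulus, the factor $(4/3)^{2k}$ should read $r^{-2k}$, where $r$ is the inner radius of $\mathrm{Supp}\,\widetilde\varphi$.
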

The following classical properties are often used:
\begin{lemm}\cite{Bahouri2011}\label{lemma2.3}
\begin{itemize}
\item \ \emph{Scaling invariance:} For any $s\in \mathbb{R}$ and $r\in
[1,\infty ]$, there exists a constant $C=C(s,r)$ such that for all $\lambda >0$ and $u\in \dot{B}_{2,r}^{s}$,
$$
C^{-1}\lambda ^{s-\frac {1}{2}}\left\|u\right\|_{\dot{B}_{2,r}^{s}}
\leq \left\|u(\lambda \cdot)\right\|_{\dot{B}_{2,r}^{s}}\leq C\lambda ^{s-\frac {1}{2}}\left\|u\right\|_{\dot{B}_{2,r}^{s}}.
$$

\item \ \emph{Completeness:} $\dot{B}^{s}_{2,r}$ is a Banach space whenever $%
s<\frac{1}{2}$ or $s\leq \frac{1}{2}$ and $r=1$.

\item \ \emph{Action of Fourier multipliers:} If $F$ is a smooth function on $\mathbb{R}^{d}\backslash \{0\}$,
homogeneous of degree $m$, then
$$F(D):\dot{B}_{2,r}^{s}\rightarrow \dot{B}_{2,r}^{s-m}.$$

\item  Complex interpolation: Let $1\leq r,r_{1},r_{2}\leq \infty$. if $u\in \dot{B}_{2,r_1}^{s}\cap \dot{B}_{2,r_2}^{\tilde{s}} $ and $s\neq \tilde{s}$, then $u\in \dot{B}_{2,r}^{\theta s+(1-\theta)\tilde{s}}$ for all $\theta\in (0,1)$ and
$$\left\|u\right\|_{\dot{B}_{2,r}^{\theta s+(1-\theta )\tilde{s}}}\leq \left\|u\right\|_{\dot{B}_{2,r_{1}}^{s}}^{\theta} \left\|u\right\|_{\dot{B}_{2,r_2}^{\tilde{s}}}^{1-\theta }$$
with $\frac{1}{r}=\frac{\theta}{r_{1}}+\frac{1-\theta}{r_{2}}$.

\item Real interpolation: if $u\in \dot{B}_{2,\infty}^{s}\cap \dot{B}_{2,\infty}^{\tilde{s}} $ and $s<\tilde{s}$, then $u\in \dot{B}_{2,1}^{\theta s+(1-\theta)\tilde{s}}$ for all $\theta\in (0,1)$ and
$$\left\|u\right\|_{\dot{B}_{2,1}^{\theta s+(1-\theta )\tilde{s}}}\leq \frac{C}{\theta(1-\theta)(\tilde{s}-s)} \left\|u\right\| _{\dot{B}_{2,\infty}^{s}}^{\theta} \left\|u\right\|_{\dot{B}_{2,\infty}^{\tilde{s}}}^{1-\theta }.$$
\end{itemize}
\end{lemm}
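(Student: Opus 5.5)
These properties are classical (see \cite{Bahouri2011}), and I would prove each item directly from the definition of $\dot B^s_{2,r}$ via the dyadic blocks of Lemma \ref{lemma2.1}, using Lemma \ref{lemma2.2} where Bernstein-type bounds are needed.

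\textbf{Scaling invariance.} Write $u_\lambda=u(\lambda\cdot)$, so that $\widehat{u_\lambda}(\xi)=\lambda^{-1}\widehat u(\xi/\lambda)$. The point is that $\dot\Delta_j u_\lambda$ localizes at frequencies of size $2^j$, which corresponds to frequencies $2^j/\lambda$ for $u$.
\begin{itemize}
\item When $\lambda=2^k$ is a dyadic power, one has exactly $\dot\Delta_j u_\lambda=(\dot\Delta_{j-k}u)(\lambda\cdot)$.
\item Together with $\|f(\lambda\cdot)\|_{L^2}=\lambda^{-1/2}\|f\|_{L^2}$, reindexing the $\ell^r$ sum gives the identity with constant $1$.
\item For general $\lambda\in[2^k,2^{k+1})$, $\dot\Delta_j u_\lambda$ only involves the blocks $\dot\Delta_{j'}u$ with $|j'-(j-k)|\le 2$. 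Inserting $\sum_{|j''-j|\le1}\dot\Delta_{j''}$ and using that $\varphi(2^{-j}\cdot)$ is an $L^2$-bounded multiplier gives the two-sided bound with a constant $C(s,r)$.
\end{itemize}

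\textbf{Completeness.} I would argue in three steps.
\begin{itemize}
\item First, for $s<\frac12$, or $s=\frac12$ with $r=1$, the series $\sum_j\dot\Delta_j u$ converges in $L^\infty$ at low frequencies. Indeed, by Lemma \ref{lemma2.2}, $\|\dot\Delta_j u\|_{L^\infty}\le C2^{j/2}\|\dot\Delta_j u\|_{L^2}=C2^{j(\frac12-s)}c_j$, where $(c_j)\in\ell^r$; summing over $j\le0$ requires exactly the stated condition on $(s,r)$. At high frequencies the series converges in $\mathcal S'$.
\item Consequently every element of $\dot B^s_{2,r}$ is a genuine tempered distribution vanishing at low frequencies, rather than a class modulo polynomials. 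This is what makes the realization injective.
\item For a Cauchy sequence $(u_n)$, the blocks $\dot\Delta_j u_n$ converge in $L^2$ to some $u_j$, and the sum $u=\sum_j u_j$ converges by the first step. By Fatou's lemma in $\ell^r$, $u\in\dot B^s_{2,r}$ and $u_n\to u$.
\end{itemize}
This step is the only genuinely delicate point, because of the low-frequency convergence.

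\textbf{Fourier multipliers.} Homogeneity of degree $m$ gives $F(2^j\xi)\varphi(\xi)=2^{jm}F(\xi)\varphi(\xi)$. Hence $\dot\Delta_jF(D)u$ is $2^{jm}$ times a fixed smooth compactly supported multiplier, rescaled and applied to $\widetilde{\dot\Delta}_j u=\sum_{|j'-j|\le1}\dot\Delta_{j'}u$. By Plancherel,
\[
\|\dot\Delta_jF(D)u\|_{L^2}\le C2^{jm}\|\widetilde{\dot\Delta}_ju\|_{L^2},
\]
and the claim $F(D):\dot B^s_{2,r}\to\dot B^{s-m}_{2,r}$ follows after multiplying by $2^{j(s-m)}$ and taking the $\ell^r$ norm.

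\textbf{Complex interpolation.} This is Hölder's inequality on sequences. Write
\[
2^{j(\theta s+(1-\theta)\tilde s)}\|\dot\Delta_ju\|_{L^2}=\left(2^{js}\|\dot\Delta_ju\|_{L^2}\right)^\theta\left(2^{j\tilde s}\|\dot\Delta_ju\|_{L^2}\right)^{1-\theta},
\]
and apply Hölder's inequality in $\ell^r$ with $\frac1r=\frac\theta{r_1}+\frac{1-\theta}{r_2}$.

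\textbf{Real interpolation.} Set $s_\theta=\theta s+(1-\theta)\tilde s$, and let $A=\|u\|_{\dot B^{s}_{2,\infty}}$ and $B=\|u\|_{\dot B^{\tilde s}_{2,\infty}}$. Split $\sum_j 2^{js_\theta}\|\dot\Delta_ju\|_{L^2}$ at an integer $N$.
\begin{itemize}
\item For $j\le N$, bound $\|\dot\Delta_ju\|_{L^2}\le 2^{-js}A$. This gives a geometric sum $\lesssim\frac{2^{N(1-\theta)(\tilde s-s)}}{(1-\theta)(\tilde s-s)}A$.
\item For $j>N$, bound $\|\dot\Delta_ju\|_{L^2}\le 2^{-j\tilde s}B$. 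This gives $\lesssim\frac{2^{-N\theta(\tilde s-s)}}{\theta(\tilde s-s)}B$.
\item Choose $N$ so that $2^{N(\tilde s-s)}\approx B/A$. Both terms then become comparable to $A^\theta B^{1-\theta}$, with the constant $\frac{C}{\theta(1-\theta)(\tilde s-s)}$.
\end{itemize}
Note that the low-frequency sum uses $s_\theta-s=(1-\theta)(\tilde s-s)>0$ and the high-frequency sum uses $s_\theta-\tilde s=-\theta(\tilde s-s)<0$.
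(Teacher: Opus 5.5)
Your proposal is correct and reproduces the standard arguments of \cite{Bahouri2011}, which is all the paper offers here since it cites the lemma without proof; using Plancherel instead of Bernstein-type kernel bounds for the multiplier estimate is the natural simplification for $p=2$. The only imprecision, inherited from the statement itself, is in the low-frequency sum of the real interpolation step: $\sum_{j\le N}2^{jx}=2^{Nx}/(1-2^{-x})$ is bounded by $C\,2^{Nx}/x$ only when $x=(1-\theta)(\tilde s-s)$ stays bounded, so your constant must depend on an upper bound for $\tilde s-s$ --- and this cannot be avoided, because any $u$ with $\widehat u$ supported in $\{\varphi=1\}$ (e.g.\ $\frac43<|\xi|<\frac32$) has $\|u\|_{\dot B^{\sigma}_{2,r}}=\|u\|_{L^2}$ for every $\sigma$ and $r$, and therefore violates the inequality with a universal $C$ once $\theta(1-\theta)(\tilde s-s)>C$.
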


Let $u$ and $v$ be tempered distributions in $\mathscr{S}_{h}'$. The homogeneous paraproduct of $v$ by $u$ is defined as follows:
\begin{align*}
	 T_u v = \sum_{j} \dot S_{j-1}u\dot\Delta_j v,
\end{align*}
and the homogeneous remainder of $u$ and $v$ is defined by
\begin{align*}
	 R(u,v) = \sum_{|k-j|\leq1} \dot\Delta_{k} u  \dot\Delta_{j} v\triangleq\sum_{k\geq-1} \dot\Delta_{k}u\tilde{\dot\Delta}_k v.
\end{align*}
At least formally, we obtain the so-called Bony decomposition:
\begin{align*}
	uv= T_u v + T_v u +  R(u,v).
\end{align*}
The following lemmas are concerned with the continuity of the paraproduct operator $T$ and remainder operator $R$.

\begin{lemm}\cite{Bahouri2011}\label{lemma2.4}
	For any $(s,t)\in\mathbb{R}\times(-\infty,0)$ and $(p,p_1,p_2,r,r_1,r_2)\in[1,\infty]^6$, there exists a constant $C$ such that
	\begin{align*}
		\| T_u v\|_{\dot B^s_{p,r}} \leq C^{1+|s|} \|u\|_{L^{p_1}}\|v\|_{\dot B^s_{p_2,r}},
	\end{align*}
	with $(u,v)\in L^{p_1}\times \dot B^s_{p_2,r}$ and $\frac{1}{p}=\frac{1}{p_1}+\frac{1}{p_2}$. Moreover, we have
	\begin{align*}
		\| T_u v\|_{\dot B^{s+t}_{p,r}} \leq \frac{C^{1+|s+t|}}{-t} \|u\|_{\dot B^{t}_{\infty,r_1}}\|v\|_{\dot B^s_{p,r_2}},
	\end{align*}
	with $(u,v) \in \dot B^{t}_{\infty,r_1}\times \dot B^s_{p,r_2}$ and $\frac{1}{r}=\min\{1,\frac{1}{r_1}+\frac{1}{r_2}\}$.
\end{lemm}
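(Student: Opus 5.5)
This is the classical paraproduct estimate of Bahouri--Chemin--Danchin, and I would prove it directly from the Littlewood--Paley decomposition. The structure of $T_uv$ is the key: since $\dot S_{j-1}u$ has Fourier support in a ball of radius about $2^{j-1}\cdot\frac43$ and $\dot\Delta_jv$ has support in $2^j\mathcal{C}$, each product $\dot S_{j-1}u\,\dot\Delta_jv$ has Fourier support in $2^j\widetilde{\mathcal{C}}$. This is exactly the annulus $\widetilde{\mathcal{C}}=B(0,\frac23)+\mathcal{C}$ from Lemma \ref{lemma2.1}. By that lemma, $\dot\Delta_{j'}(\dot S_{j-1}u\,\dot\Delta_jv)=0$ unless $|j-j'|\le 4$. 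Hence
\begin{align*}
\|\dot\Delta_{j'}T_uv\|_{L^p}\le C\sum_{|j-j'|\le4}\|\dot S_{j-1}u\,\dot\Delta_jv\|_{L^p}.
\end{align*}
Here I use that $\dot\Delta_{j'}$ is bounded on $L^p$ uniformly in $j'$, which follows from Young's inequality with the rescaled $L^1$ kernel.

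For the first inequality, I apply H\"older with $\frac1p=\frac1{p_1}+\frac1{p_2}$. The operator $\dot S_{j-1}$ is bounded on $L^{p_1}$ uniformly in $j$, again by Young's inequality. This gives
\begin{align*}
\|\dot S_{j-1}u\,\dot\Delta_jv\|_{L^p}\le C\|u\|_{L^{p_1}}\|\dot\Delta_jv\|_{L^{p_2}}.
\end{align*}
Multiplying by $2^{j's}$ and writing $2^{j's}=2^{(j'-j)s}2^{js}$ with $|j'-j|\le4$ produces the factor $C^{1+|s|}$ (for instance $2^{4|s|}$). Taking the $\ell^r$ norm over $j'$, a finite convolution in $\ell^r$, yields $\|T_uv\|_{\dot B^s_{p,r}}\le C^{1+|s|}\|u\|_{L^{p_1}}\|v\|_{\dot B^s_{p_2,r}}$. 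This step requires no condition on $s$.

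For the second inequality, I bound $\dot S_{j-1}u$ in $L^\infty$ using the negative regularity $t<0$:
\begin{align*}
2^{jt}\|\dot S_{j-1}u\|_{L^\infty}\le\sum_{k\le j-2}2^{(j-k)t}\,2^{kt}\|\dot\Delta_ku\|_{L^\infty}.
\end{align*}
Since $t<0$, the weights $2^{(j-k)t}$ with $j-k\ge2$ form a summable geometric series of size $\lesssim\frac{1}{1-2^{t}}\lesssim\frac{C}{-t}$ for $t$ near $0$; for $t$ bounded away from $0$ this is bounded by a constant. By Young's inequality for series, $\big(2^{jt}\|\dot S_{j-1}u\|_{L^\infty}\big)_j\in\ell^{r_1}$ with norm at most $\frac{C}{-t}\|u\|_{\dot B^t_{\infty,r_1}}$. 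I then use H\"older in space with $L^\infty\times L^p$, and write $2^{j'(s+t)}\sim C^{|s+t|}2^{jt}2^{js}$ on $|j-j'|\le4$. Finally I take H\"older in $\ell$ with $\frac1r=\min\{1,\frac1{r_1}+\frac1{r_2}\}$, using $\ell^{q}\subset\ell^1$ when $\frac1{r_1}+\frac1{r_2}>1$, combined with the finite convolution. This gives the claimed bound with constant $\frac{C^{1+|s+t|}}{-t}$.

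The main point of care is the convergence of the low-frequency sum $\sum_{k\le j-2}$ and the tracking of the $\frac1{-t}$ dependence; this is exactly where $t<0$ is needed. A secondary issue is justifying that $\sum_j\dot S_{j-1}u\,\dot\Delta_jv$ converges in $\mathcal{S}'_h$. This follows from the almost-orthogonal support property and the bounds just derived, since a series of functions spectrally localized in dyadic annuli with summable $\dot B^{s}$-type weights converges in $\mathcal{S}'_h$ under the stated conditions. All remaining steps are routine.
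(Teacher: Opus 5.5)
Your proposal is correct, and it is the standard Bahouri--Chemin--Danchin proof: localize $\dot S_{j-1}u\,\dot\Delta_j v$ spectrally in $2^j\widetilde{\mathcal{C}}$, apply H\"older's inequality, and use Young's inequality for series on $\dot S_{j-1}u$ when $t<0$; the paper cites this result rather than reproving it. One point to tighten is that $\sum_{m\ge 2}2^{mt}=\frac{2^{2t}}{1-2^{t}}$ is bounded by $\frac{C}{-t}$ uniformly for all $t<0$ (it decays like $2^{2t}$ as $t\to-\infty$), and it is this uniform bound, not mere boundedness for $t$ away from $0$, that the stated constant $\frac{C^{1+|s+t|}}{-t}$ requires.
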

\begin{lemm}\cite{Bahouri2011}\label{lemma2.4.2}
There exists a constant $C$ such that the following estimates hold.

Let $(s_1, s_2) \in \mathbb{R}^2$ and $(p_1, p_2, r_1, r_2) \in [1, \infty]^4$.
Assume that
\[
\frac{1}{p} = \frac{1}{p_1} + \frac{1}{p_2} \leq 1
\quad\text{and}\quad
\frac{1}{r}= \frac{1}{r_1} + \frac{1}{r_2} \leq 1.
\]
Then the following estimates hold for the remainder operator $R$:\\
(i): If $s_1 + s_2 > 0$, then for all  $(u, v) \in \dot{B}^{s_1}_{p_1, r_1} \times \dot{B}^{s_2}_{p_2, r_2}$,
\begin{align*}
 \|{R}(u, v)\|_{\dot{B}^{s_1 + s_2}_{p, r}}
\leq \frac{C^{\,|s_1 + s_2| + 1}}{s_1 + s_2}
\|u\|_{\dot{B}^{s_1}_{p_1, r_1}} \|v\|_{\dot{B}^{s_2}_{p_2, r_2}},
\end{align*}
(ii) If  $r = 1$  and  $s_1 + s_2 \geq 0$, then for all $ (u, v) \in \dot{B}^{s_1}_{p_1, r_1} \times \dot{B}^{s_2}_{p_2, r_2}$,
\begin{align*}
 \|R(u, v)\|_{\dot{B}^{s_1 + s_2}_{p, \infty}}
\leq C^{\,|s_1 + s_2| + 1}
\|u\|_{\dot{B}^{s_1}_{p_1, r_1}} \|v\|_{\dot{B}^{s_2}_{p_2, r_2}}.
\end{align*}
\end{lemm}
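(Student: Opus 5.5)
The plan is the standard dyadic-block argument from \cite{Bahouri2011}: localize $R(u,v)$ in frequency, estimate each block by H\"older's inequality, and sum with a discrete Young inequality. Write $R(u,v)=\sum_{k}\dot\Delta_k u\,\widetilde{\dot\Delta}_k v$ with $\widetilde{\dot\Delta}_k=\dot\Delta_{k-1}+\dot\Delta_k+\dot\Delta_{k+1}$. By Lemma \ref{lemma2.1}, $\mathscr{F}(\dot\Delta_k u\,\widetilde{\dot\Delta}_k v)$ is supported in a ball $2^{k}\mathscr{B}$, with $\mathscr{B}$ a fixed ball of radius about $\frac{8}{3}\cdot 2+\frac{8}{3}\cdot 4$. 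Since $\dot\Delta_j$ localizes to $2^j\mathcal{C}$, there is a fixed integer $N_0$ such that $\dot\Delta_j(\dot\Delta_k u\,\widetilde{\dot\Delta}_k v)=0$ whenever $k<j-N_0$. Hence
\begin{align*}
\dot\Delta_j R(u,v)=\sum_{k\ge j-N_0}\dot\Delta_j\big(\dot\Delta_k u\,\widetilde{\dot\Delta}_k v\big).
\end{align*}

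Next, the operators $\dot\Delta_j$ are uniformly bounded on $L^p$, and H\"older's inequality applies with $\frac1p=\frac1{p_1}+\frac1{p_2}\le1$. Multiplying by $2^{j(s_1+s_2)}$ and writing $s=s_1+s_2$, this gives
\begin{align*}
2^{js}\|\dot\Delta_j R(u,v)\|_{L^p}\le C\sum_{k\ge j-N_0}2^{(j-k)s}\,c_k,\qquad c_k:=2^{ks_1}\|\dot\Delta_k u\|_{L^{p_1}}\,2^{ks_2}\|\widetilde{\dot\Delta}_k v\|_{L^{p_2}}.
\end{align*}
H\"older's inequality in $\ell^r$, with $\frac1r=\frac1{r_1}+\frac1{r_2}$, then gives $\|(c_k)\|_{\ell^r}\le C\|u\|_{\dot B^{s_1}_{p_1,r_1}}\|v\|_{\dot B^{s_2}_{p_2,r_2}}$. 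The shift by $\pm1$ in $\widetilde{\dot\Delta}_k$ only costs a factor $2^{|s_2|}$, which is absorbed into $C^{|s|+1}$ after adjusting $C$.

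For (i), where $s>0$, the right-hand side is the convolution of $(c_k)$ with the sequence $a_m=2^{-ms}\mathbf 1_{m\ge -N_0}$. Its $\ell^1$ norm is
\begin{align*}
\sum_{m\ge -N_0}2^{-ms}=\frac{2^{N_0 s}}{1-2^{-s}}\le \frac{C^{s+1}}{s},
\end{align*}
using $1-2^{-s}\ge c\min(s,1)$. Young's inequality $\ell^1*\ell^r\subset\ell^r$ then gives the bound in $\dot B^{s}_{p,r}$. For (ii), where $r=1$ and $s\ge0$, I bound $2^{(j-k)s}\le 2^{N_0 s}$ for $k\ge j-N_0$. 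Then $\sup_j 2^{js}\|\dot\Delta_jR\|_{L^p}\le C^{s+1}\|(c_k)\|_{\ell^1}$, which is the $\dot B^{s}_{p,\infty}$ estimate.

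The only delicate point is the bookkeeping of constants. One has to see that the geometric series produces exactly the factor $\frac{1}{s_1+s_2}$, which blows up as $s_1+s_2\to0$, and that this is why the endpoint case forces $r=1$ and a loss to $\ell^\infty$. The remaining steps are routine applications of H\"older and Young.
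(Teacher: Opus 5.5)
Your argument is the standard dyadic proof from \cite{Bahouri2011}, which is all the paper relies on here: the lemma is quoted with a citation and no proof. Its structure is sound. The localization $\dot\Delta_jR(u,v)=\sum_{k\ge j-N_0}\dot\Delta_j(\dot\Delta_ku\,\widetilde{\dot\Delta}_kv)$ is correct. So are the H\"older steps in $x$ and in $k$, and the Young step with a kernel of $\ell^1$ norm $2^{N_0s}/(1-2^{-s})\le C^{s+1}/s$. The bound $2^{(j-k)s}\le 2^{N_0s}$ for (ii) is also fine.

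The step that fails is the claim that the factor produced by the shifts in $\widetilde{\dot\Delta}_k$ ``is absorbed into $C^{|s|+1}$ after adjusting $C$''. That factor is $2^{-s_2}+1+2^{s_2}$, and $|s_2|$ is not controlled by $|s_1+s_2|$. No cleverer bookkeeping repairs this, because the dependence on $s_2$ is genuine.

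To see this, take nonzero $u,v$ with $\widehat u$ supported in $\{\frac43<|\xi|<\frac32\}$ and $\widehat v$ supported in $\{\frac83<|\xi|<3\}$. By Lemma \ref{lemma2.1}, $\dot\Delta_0u=u$ and $\dot\Delta_1v=v$, and all other blocks vanish. Hence $R(u,v)=uv$, which is nonzero since both factors are entire by Paley--Wiener. On the other hand,
\[
\|u\|_{\dot B^{s_1}_{p_1,r_1}}\|v\|_{\dot B^{s_2}_{p_2,r_2}}=2^{s_2}\|u\|_{L^{p_1}}\|v\|_{L^{p_2}}.
\]
The left-hand side $\|uv\|_{\dot B^{s}_{p,r}}$ depends only on $u$, $v$ and $s=s_1+s_2$. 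Letting $s_2\to-\infty$ with $s$ fixed therefore forces the constant to grow like $2^{|s_2|}$.

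So, with a universal $C$, the estimates hold only with constants such as $C^{1+|s_1+s_2|}2^{|s_2|}/(s_1+s_2)$ in (i) and $C^{1+|s_1+s_2|}2^{|s_2|}$ in (ii). Equivalently, replace $C^{1+|s_1+s_2|}$ by $C^{1+|s_1|+|s_2|}$. This is exactly what your argument proves.

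This is harmless for the paper, where the lemma is only applied with fixed indices. Still, you should state the constant in that form rather than assert the absorption.
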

The following product estimates in Besov spaces play a key role in our analysis of the nonlinear terms of (\ref{eq1}).
\begin{lemm}\label{lemma2.5}\cite{Bahouri2011}
Let $s>0$ and $1\leq p,r\leq\infty$. Then $\dot{B}^{s}_{p,r}\cap L^{\infty}$ is an algebra and
\begin{equation*}
\left\|uv\right\|_{\dot{B}^{s}_{p,r}}\le C\left\|u\right\|_{L^{\infty}}\left\|v\right\|_{\dot{B}^{s}_{p,r}}+\left\|v\right\|_{L^{\infty}}\left\|u\right\|_{\dot{B}^{s}_{p,r}}.
\end{equation*}
Let $-\frac{1}{2}<s_{2}\le\frac{1}{2}$.
Then it holds that
\begin{equation}\label{201}
\left\|uv\right\|_{\dot{B}^{s_{2}}_{2,1}}\le C\left\|u\right\|_{\dot{B}^{\frac{1}{2}}_{2,1}}\left\|v\right\|_{\dot{B}^{s_{2}}_{2,1}}.
\end{equation}
Additionally, for exponents $-\frac{1}{2}<s\le\frac{1}{2}$, one has
\begin{equation}\label{202}
\left\|uv\right\|_{\dot{B}^{-s}_{2,\infty}}\le C\left\|u\right\|_{\dot{B}^{\frac{1}{2}}_{2,1}}\left\|v\right\|_{\dot{B}^{-s}_{2,\infty}}.
\end{equation}
\end{lemm}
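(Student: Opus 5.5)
The statement to prove is Lemma \ref{lemma2.5}. The plan is to split each product with Bony's decomposition $uv=T_uv+T_vu+R(u,v)$ and bound the three pieces using Lemmas \ref{lemma2.4} and \ref{lemma2.4.2}. Throughout we use the one-dimensional embedding $\dot B^{\frac12}_{2,1}\hookrightarrow L^\infty$. This embedding follows from Bernstein's inequality (Lemma \ref{lemma2.2}), which gives $\|\dot\Delta_j u\|_{L^\infty}\le C2^{j/2}\|\dot\Delta_ju\|_{L^2}$, followed by summation in $j$.

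\textbf{The algebra estimate for $s>0$.} This is standard.
\begin{itemize}
\item The first part of Lemma \ref{lemma2.4}, with $p_1=\infty$ and $p_2=p$, gives $\|T_uv\|_{\dot B^s_{p,r}}\lesssim\|u\|_{L^\infty}\|v\|_{\dot B^s_{p,r}}$. The term $T_vu$ is handled symmetrically.
\item For the remainder, we use $\|v\|_{L^\infty}$ in the form $v\in\dot B^0_{\infty,\infty}$. Since $s+0>0$, Lemma \ref{lemma2.4.2}(i) applies. The indices can be arranged directly: each block $\dot\Delta_k u\,\tilde{\dot\Delta}_kv$ has spectrum in a ball of radius $\sim2^k$, so $2^{js}\|\dot\Delta_jR\|_{L^p}\lesssim\sum_{k\ge j-3}2^{(j-k)s}2^{ks}\|\dot\Delta_ku\|_{L^p}\|v\|_{L^\infty}$. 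Because $s>0$, the kernel $2^{(j-k)s}$ is summable, and Young's inequality for series closes the bound.
\end{itemize}

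\textbf{Estimate (\ref{201}), with $-\frac12<s_2\le\frac12$.}
\begin{itemize}
\item For $T_uv$, apply Lemma \ref{lemma2.4} with $u\in L^\infty$, controlled by $\|u\|_{\dot B^{1/2}_{2,1}}$.
\item For $T_vu$, there are two cases. If $s_2<\frac12$, set $t=s_2-\frac12<0$ and use $\dot B^{s_2}_{2,1}\hookrightarrow\dot B^{s_2-\frac12}_{\infty,1}$, again by Bernstein. The second part of Lemma \ref{lemma2.4} then gives $\|T_vu\|_{\dot B^{s_2}_{2,1}}\lesssim\|v\|_{\dot B^{s_2-1/2}_{\infty,1}}\|u\|_{\dot B^{1/2}_{2,1}}$. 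If $s_2=\frac12$, then $v\in L^\infty$ and we use the first part of Lemma \ref{lemma2.4} instead.
\item For $R(u,v)$, note $s_1+s_2=\frac12+s_2>0$. Lemma \ref{lemma2.4.2}(i) with $p_1=p_2=2$ gives $R\in\dot B^{s_2+\frac12}_{1,1}$, and the 1D Bernstein embedding $\dot B^{s_2+\frac12}_{1,1}\hookrightarrow\dot B^{s_2}_{2,1}$ finishes.
\end{itemize}
This step is where the lower bound $s_2>-\frac12$ is used.

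\textbf{Estimate (\ref{202}), in $\dot B^{-s}_{2,\infty}$ with $-\frac12<s\le\frac12$.} The argument repeats the previous one with third index $\infty$ on $v$.
\begin{itemize}
\item For $T_uv$, use Lemma \ref{lemma2.4} with $u\in L^\infty$.
\item For $T_vu$, when $-s<\frac12$ use $v\in\dot B^{-s-\frac12}_{\infty,\infty}$ with $t=-s-\frac12<0$. Together with $u\in\dot B^{\frac12}_{2,1}$, and since $\frac1r=\min\{1,\frac1\infty+1\}$ gives $r=1$, the output lies in $\dot B^{-s}_{2,1}\hookrightarrow\dot B^{-s}_{2,\infty}$. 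The condition $-s<\frac12$ holds exactly because $s>-\frac12$.
\item For the remainder, $\frac12-s\ge0$. Lemma \ref{lemma2.4.2}(ii) applies with $r_1=1$ and $r_2=\infty$, so $r=1$, and gives $R\in\dot B^{\frac12-s}_{1,\infty}$. Bernstein's inequality then yields $\dot B^{-s}_{2,\infty}$. The endpoint $s=\frac12$ is exactly the case that requires (ii) rather than (i).
\end{itemize}

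\textbf{Main obstacle.} The delicate points are the endpoint cases: $s_2=\frac12$ in (\ref{201}), and $s=\frac12$ (zero regularity sum) in (\ref{202}). Both are handled by switching to the $L^\infty$ version of the paraproduct bound and to the $\ell^\infty$ remainder estimate (ii), respectively. Everything else is bookkeeping of indices.
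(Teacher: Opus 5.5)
Your proposal is correct and is the standard argument behind \cite{Bahouri2011}: Bony decomposition, the paraproduct and remainder bounds of Lemmas \ref{lemma2.4}--\ref{lemma2.4.2}, and one-dimensional Bernstein embeddings. The paper cites that reference rather than giving a proof; note that your argument puts the constant $C$ on both terms of the first inequality, which is the intended reading of that display. The one index detail to state explicitly is in the remainder for (\ref{201}): Lemma \ref{lemma2.4.2} requires $\frac{1}{r_1}+\frac{1}{r_2}\le 1$, so you should first use $\dot B^{s_2}_{2,1}\hookrightarrow\dot B^{s_2}_{2,\infty}$, taking $r_1=1$, $r_2=\infty$, $r=1$, before landing in $\dot B^{s_2+\frac12}_{1,1}$.
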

The following lemma is concerned with composition estimates and will be used later.
\begin{lemm}\label{lemma2.6}\cite{Bahouri2011,MR3609245}
Let $F:\R\rightarrow\R$ be smooth with $F(0)=0$.
For all $1\leq r\leq\infty$ and $s>0$, it holds that
$F(u)\in \dot {B}^{s}_{2,r}\cap L^{\infty}$ for $u\in \dot {B}^{s}_{2,r}\cap L^{\infty}$,  and
$$\left\|F(u)\right\|_{\dot B^{s}_{2,r}}\leq C\left\|u\right\|_{\dot B^{s}_{2,r}}$$
with $C$ depending only on $\left\|u\right\|_{L^{\infty}}$, $F'$ \emph{(}and higher derivatives\emph{)}, $s$.

In the case $s>-\frac {1}{2}$ then $u\in\dot {B}^{s}_{2,r}\cap\dot {B}^{\frac {1}{2}}_{2,1}$
implies that $F(u)\in \dot {B}^{s}_{2,r}\cap\dot {B}^{\frac {1}{2}}_{2,1}$, and
\begin{equation*}
\left\|F(u)\right\|_{\dot B^{s}_{2,r}}\leq C\left(1+\|u\|_{\dot {B}^{\frac {1}{2}}_{2,1}}\right)\|u\|_{\dot {B}^{s}_{2,r}}.
\end{equation*}
Moreover, the case $s=-\frac{1}{2}$ holds true when $r=\infty$.
 For all $1\leq r, \theta\leq\infty$ and $-\frac{1}{2}<s<\frac{1}{2}$, if $f_{i}\in\dot{B}^{s}_{2,r}\cap\dot {B}^{\frac {1}{2}}_{2,1}$ ($i=1, 2$), we have that
$$\|F(f_{1})-F(f_{2})\|_{\dot B^{s}_{2,r}}\leq C\|f_{1}-f_{2}\|_{\dot {B}^{s}_{2,r}},$$
where $C>0$ is a constant depending on $\|f_{i}\|_{\dot B^{\frac{1}{2}}_{2,1}}$, $F$ and $s$. Moreover, the case $s=\frac{1}{2}$ holds true when $r=1$.
\end{lemm}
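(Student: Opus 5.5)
The plan is to prove the three assertions of Lemma \ref{lemma2.6} in turn. The first is the basis; the other two reduce to it through the product estimates of Lemma \ref{lemma2.5}.

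\textbf{Step 1 (case $s>0$).} I will use Meyer's first linearization. Since $F(0)=0$ and $\dot S_ju\to0$ as $j\to-\infty$ (for $u\in\mathcal S_h'$), we can write
\begin{align*}
F(u)=\sum_{j\in\Z}\big(F(\dot S_{j+1}u)-F(\dot S_ju)\big)=\sum_{j\in\Z} m_j\,\dot\Delta_ju,\qquad m_j=\int_0^1F'(\dot S_ju+\tau\dot\Delta_ju)\,d\tau.
\end{align*}
The function $\dot S_ju+\tau\dot\Delta_ju$ is spectrally supported in a ball of size $2^j$. Hence Lemma \ref{lemma2.2} and the chain rule give $\|D^k m_j\|_{L^\infty}\le C_k2^{jk}$, with $C_k$ depending only on $\|u\|_{L^\infty}$ and $F^{(l)}$, $l\le k+1$.

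We then bound $\dot\Delta_q F(u)$ by splitting the sum at $j=q$. For $j\ge q$ we use $\|\dot\Delta_q(m_j\dot\Delta_ju)\|_{L^2}\le C\|\dot\Delta_ju\|_{L^2}$. For $j<q$ we apply Lemma \ref{lemma2.2} with $k>s$ derivatives to get
\begin{align*}
\|\dot\Delta_q(m_j\dot\Delta_ju)\|_{L^2}\le C2^{-qk}\|D^k(m_j\dot\Delta_ju)\|_{L^2}\le C2^{-(q-j)k}\|\dot\Delta_ju\|_{L^2}.
\end{align*}
Multiplying by $2^{qs}$ gives a convolution of $(2^{js}\|\dot\Delta_ju\|_{L^2})_j$ with the kernel $2^{-(q-j)s}\mathbf 1_{j\ge q}+2^{-(q-j)(k-s)}\mathbf 1_{j<q}$. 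This kernel is summable exactly because $s>0$ and $k>s$, so Young's inequality in $\ell^r$ gives $\|F(u)\|_{\dot B^s_{2,r}}\le C\|u\|_{\dot B^s_{2,r}}$. The fact that $F(u)\in L^\infty$ is trivial.

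\textbf{Step 2 (case $s>-\frac12$).} Write $F(u)=F'(0)u+u\,\widetilde H(u)$ with $\widetilde H(u)=\int_0^1\big(F'(\tau u)-F'(0)\big)d\tau$. The map $\widetilde H$ is smooth and vanishes at $0$, and $\dot B^{\frac12}_{2,1}\hookrightarrow L^\infty$ in one dimension. Step 1 with $s=\frac12$, $r=1$ therefore gives $\|\widetilde H(u)\|_{\dot B^{\frac12}_{2,1}}\le C\|u\|_{\dot B^{\frac12}_{2,1}}$. It remains to bound $\|u\,\widetilde H(u)\|_{\dot B^s_{2,r}}\le C\|\widetilde H(u)\|_{\dot B^{\frac12}_{2,1}}\|u\|_{\dot B^s_{2,r}}$, which is the $\dot B^s_{2,r}$ version of \eqref{201}.

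I would prove that product estimate directly from Bony's decomposition. The paraproduct $T_{\widetilde H}u$ is handled by Lemma \ref{lemma2.4} with the $L^\infty$ bound. The paraproduct $T_u\widetilde H$ uses Lemma \ref{lemma2.4} with $u\in\dot B^{s-\frac12}_{\infty,r}$ (negative index when $s<\frac12$; the case $s\ge\frac12$ falls under Lemma \ref{lemma2.5}). The remainder uses Lemma \ref{lemma2.4.2}(i), since $s+\frac12>0$. At the endpoint $s=-\frac12$, $r=\infty$, the remainder is instead controlled by Lemma \ref{lemma2.4.2}(ii) in $\dot B^{0}_{1,\infty}$, and then the embedding $\dot B^0_{1,\infty}\hookrightarrow\dot B^{-\frac12}_{2,\infty}$ gives the claimed bound.

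\textbf{Step 3 (difference estimate).} Write
\begin{align*}
F(f_1)-F(f_2)=(f_1-f_2)\Big(F'(0)+\int_0^1\big(F'(f_2+\tau(f_1-f_2))-F'(0)\big)d\tau\Big).
\end{align*}
The bracketed integral is a smooth function of $(f_1,f_2)$ vanishing at $0$. Hence a two-variable version of Step 1 places it in $\dot B^{\frac12}_{2,1}$, with norm bounded in terms of $\|f_i\|_{\dot B^{\frac12}_{2,1}}$. The product estimate of Step 2 then yields the claim for $-\frac12<s<\frac12$, and the algebra property of $\dot B^{\frac12}_{2,1}$ (Lemma \ref{lemma2.5}) covers $s=\frac12$, $r=1$.

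I expect the main obstacle to be Step 1, specifically the uniform derivative bounds on $m_j$ and the summation for $j<q$. A secondary obstacle is the product law in Step 2 for general $r$ near the endpoint $s\to-\frac12$, where the remainder term is the delicate piece.
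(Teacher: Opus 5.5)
Your proposal is correct and follows the standard argument of the references the paper cites (the paper itself gives no proof): Meyer's first linearization for $s>0$ as in Bahouri--Chemin--Danchin, then the splitting $F(u)=F'(0)u+u\,\widetilde H(u)$ with the Bony-decomposition product law $\dot B^{\frac12}_{2,1}\times\dot B^{s}_{2,r}\to\dot B^{s}_{2,r}$ (and its $\dot B^{-\frac12}_{2,\infty}$ endpoint via Lemma~\ref{lemma2.4.2}(ii)), and the mean-value identity for differences. Only cosmetic fixes are needed:
\begin{itemize}
\item For $j\ge q$ the kernel is $2^{-(j-q)s}$, not $2^{-(q-j)s}$.
\item In the remainder, put the $\dot B^{\frac12}_{2,1}$ factor in $\dot B^{\frac12}_{2,\infty}$, so that the condition $\frac1{r_1}+\frac1{r_2}\le1$ of Lemma~\ref{lemma2.4.2}(i) holds when $r<\infty$.
\item In Step 2, the case $s>0$ is just Step 1 combined with $\|u\|_{L^\infty}\le C\|u\|_{\dot B^{\frac12}_{2,1}}$. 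The product law is therefore only needed for $-\frac12<s\le 0$, and you need not invoke it for $s\ge\frac12$.
\end{itemize}
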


\begin{lemm}\label{lemma2.7}
Let $-\frac{1}{2}<s\leq\frac {3}{2}$ and $0\le\alpha\le 1$.
There exists a constant $C>0$ depending only on $s$ such that for all $j\in\Z$, it holds that
\begin{equation*}
\|[v\partial_x, \Lambda^{\alpha}\dot{\Delta}_{j}]a\|_{L^{2}}\leq
Cc_{j}2^{-j\left(s-\alpha\right)}\|v_x\|_{\dot{B}^{\frac{1}{2}}_{2,1}}\|a\|_{\dot{B}^{s}_{2,1}},
\end{equation*}
where the commutator
$\left[\cdot,\cdot\right]$ is defined by $[f,g]=fg-gf$ and $(c_{j})_{j\in\Z}$ denotes
a sequence such that $\|(c_{j})\|_{l^{1}}\leq 1$.\\
In the limit case $s=-\frac{1}{2}$, we have
\begin{equation*}
\sup_{j\in\mathbb{Z}}2^{(-\alpha-\frac{1}{2})j}\|[v\partial_x, \Lambda^{\alpha}\dot{\Delta}_{j}]a\|_{L^{2}}\leq
C\|v_x\|_{\dot{B}^{\frac{1}{2}}_{2,1}}\|a\|_{\dot{B}^{-\frac{1}{2}}_{2,\infty}},
\end{equation*}
\end{lemm}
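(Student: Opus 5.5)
The approach is Bony's decomposition of the commutator. Write
\[
[v\partial_x,\Lambda^{\alpha}\dot\Delta_j]a=[T_v\partial_x,\Lambda^{\alpha}\dot\Delta_j]a+T_{\partial_x\Lambda^{\alpha}\dot\Delta_j a}v-\Lambda^{\alpha}\dot\Delta_j T_{\partial_x a}v+R(v,\partial_x\Lambda^{\alpha}\dot\Delta_j a)-\Lambda^{\alpha}\dot\Delta_j R(v,\partial_x a),
\]
and estimate each of the five pieces in $L^2$ by $c_j2^{-j(s-\alpha)}\|v_x\|_{\dot B^{1/2}_{2,1}}\|a\|_{\dot B^s_{2,1}}$. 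Throughout, the embedding $\dot B^{1/2}_{2,1}\hookrightarrow L^\infty$ in one dimension gives $\|v_x\|_{L^\infty}\lesssim\|v_x\|_{\dot B^{1/2}_{2,1}}$. Since $\Lambda^\alpha\dot\Delta_j$ localizes to frequencies $\sim2^j$, each term carries at most a factor $2^{j\alpha}$ from $\Lambda^\alpha$. This is what produces the shift $2^{-j(s-\alpha)}$, so the argument is essentially the classical one for $\alpha=0$ (as in \cite{Bahouri2011}).

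\textbf{Step 1: the paraproduct commutator.} By spectral localization,
\[
[T_v\partial_x,\Lambda^\alpha\dot\Delta_j]a=\sum_{|k-j|\le4}[\dot S_{k-1}v,\Lambda^\alpha\dot\Delta_j]\partial_x\dot\Delta_k a.
\]
The operator $\Lambda^\alpha\dot\Delta_j$ is convolution with $2^{j(1+\alpha)}h_\alpha(2^j\cdot)$, where $h_\alpha=\mathscr F^{-1}(|\xi|^\alpha\varphi)$ is Schwartz because $\varphi$ is supported away from $0$. The first-order Taylor formula then gives the commutator bound
\[
\|[\dot S_{k-1}v,\Lambda^\alpha\dot\Delta_j]f\|_{L^2}\lesssim2^{j(\alpha-1)}\|\partial_x\dot S_{k-1}v\|_{L^\infty}\|f\|_{L^2}.
\]
Combining this with Bernstein's inequality (Lemma \ref{lemma2.2}) for $\partial_x\dot\Delta_k a$ yields the bound $2^{j\alpha}\|v_x\|_{L^\infty}\sum_{|k-j|\le4}\|\dot\Delta_k a\|_{L^2}$, which has the required form. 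This step places no restriction on $s$.

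\textbf{Step 2: the remaining terms.} The term $T_{\partial_x\Lambda^\alpha\dot\Delta_j a}v$ and the term $\Lambda^\alpha\dot\Delta_j T_{\partial_x a}v$ both have the structure low-frequency part of $\partial_x a$ times high-frequency part of $v$. To bound them, write $\dot\Delta_k v=2^{-k}\cdot(2^k\dot\Delta_k v)$ and use
\[
\|\dot S_{k-1}\partial_x a\|_{L^\infty}\lesssim\sum_{k'\le k-2}2^{k'(3/2)}\|\dot\Delta_{k'}a\|_{L^2}.
\]
Summing, the $2^{-k}$ compensates the extra derivative, and this requires $s\le\frac32$. This is the source of the upper endpoint, and in the equality case the sum is handled via $\ell^1$ convolution. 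The remainder terms $R(v,\partial_x a)$ and $R(v,\partial_x\Lambda^\alpha\dot\Delta_j a)$ are estimated by Bernstein in the form $L^2\times L^2\to L^1\to L^2$ at frequency $2^j$. This gives a factor
\[
2^{j/2}\sum_{k\ge j-3}\|\dot\Delta_k v_x\|_{L^2}\|\dot\Delta_k a\|_{L^2}\,2^{j\alpha},
\]
which sums to the desired bound exactly when $s+\frac12>0$, i.e.\ when $s>-\frac12$. This is the lower restriction.

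\textbf{Step 3: the endpoint and the main obstacle.} In the limit case $s=-\frac12$ with $a\in\dot B^{-1/2}_{2,\infty}$, the same bookkeeping holds: every term except the remainder sums trivially with a $\sup_j$ instead of $\ell^1$. The remainder has index sum $s_1+s_2=0$, which is exactly the borderline allowed by Lemma \ref{lemma2.4.2}(ii) with $r=1$ coming from $v_x\in\dot B^{1/2}_{2,1}$. It therefore lands in $\dot B^{0}_{2,\infty}$-type bounds at frequency $2^j$, giving the $\sup_j 2^{(-\alpha-1/2)j}$ estimate. The main obstacle I anticipate is precisely this remainder term together with the paraproduct $T_{\partial_x a}v$ at the endpoints $s=\frac32$ and $s=-\frac12$, where the geometric summations become critical and one must track the $\ell^1$ (respectively $\ell^\infty$) structure carefully rather than lose a logarithm. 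The Step 1 commutator for fractional $\alpha$ is routine once the kernel $h_\alpha$ is recognized to be Schwartz.
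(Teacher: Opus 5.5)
Your proposal is correct and follows essentially the same route as the paper: the same five-term Bony decomposition, the same kernel/mean-value argument for $[T_v,\Lambda^{\alpha}\dot\Delta_j]\partial_x a$, and the same sources of the restrictions $s\le\frac32$ (from $\Lambda^{\alpha}\dot\Delta_jT_{\partial_x a}v$) and $s>-\frac12$ (from $\Lambda^{\alpha}\dot\Delta_jR(v,\partial_x a)$). The only cosmetic difference is that you bound $\Lambda^{\alpha}\dot\Delta_jT_{\partial_x a}v$ and $\Lambda^{\alpha}\dot\Delta_jR(v,\partial_x a)$ directly dyadic block by dyadic block, whereas the paper obtains those two bounds from the global continuity results Lemma \ref{lemma2.4} and Lemma \ref{lemma2.4.2}.
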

\begin{proof}
The proof is similar to that of Lemma 2.100 in \cite{Bahouri2011}. Using Bony's decomposition for $[v\partial_x, \Lambda^{\alpha}\dot{\Delta}_{j}]a$, we have
\begin{align}\label{203}
[v\partial_x, \Lambda^{\alpha}\dot{\Delta}_{j}]a=[T_{v},\Lambda^{\alpha} \dot{\Delta}_{j}]\partial_x a&+T_{\Lambda^{\alpha}\dot\Delta_j\partial_x a}v-\Lambda^{\alpha}\dot\Delta_jT_{\dot\partial_x a}v\\\nonumber
&+R(v,\partial_x\Lambda^{\alpha}\dot\Delta_j a)-\Lambda^{\alpha}\dot\Delta_j R(v,\partial_x a)\nonumber.
\end{align}
By virtue of Lemma \ref{lemma2.1}, we have 
\begin{align*}
[T_{v},\Lambda^{\alpha} \dot{\Delta}_{j}]\partial_x a=2^{j\alpha}\sum_{|j-j'|\le 4}[\dot S_{j'-1}v,\widetilde{\Delta}_{j}]\partial_x \dot\Delta_{j'}a,
\end{align*}
where $\widetilde{\Delta}_{j}f=(|\xi|^{\alpha}\varphi)(2^{-j}D)f$. Let $g=\mathscr{F}^{-1}(|\xi|^{\alpha}\varphi)$, then we conclude that
\begin{align*}
[\dot S_{j'-1}v,\widetilde{\Delta}_{j}]\partial_x \dot\Delta_{j'}a=2^{j}\int_{\mathbb{R}}g(2^{j}(x-y))\left(S_{j'-1}v(x)-S_{j'-1}v(y)\right)\partial_x \dot\Delta_{j'}a(y)dy.
\end{align*}
By Bernstein's inequality and the mean value theorem, we obtain
\begin{align*}
\|[T_{v},\Lambda^{\alpha} \dot{\Delta}_{j}]\partial_x a\|_{L^2}&\le C\|\partial_x v\|_{L^\infty}\sum_{|j-j'|\le 4}\|\Lambda^{\alpha}\dot\Delta_{j'}a\|_{L^2}\\
&\le C\|\partial_x v\|_{\dot{B}^{\frac{1}{2}}_{2,1}}\sum_{|j-j'|\le 4}\|\Lambda^{\alpha}\dot\Delta_{j'}a\|_{L^2}.
\end{align*}
To handle the second term, we may write 
\begin{align*}
\|T_{\Lambda^{\alpha}\dot\Delta_j\partial_x a}v\|_{L^2}&\le C\sum_{j'\ge j-3}\|\dot S_{j'-1}\Lambda^{\alpha}\partial_x\dot\Delta_ja\dot\Delta_{j'}v\|_{L^2}\\
&\le C\sum_{j'\ge j-3}2^{j-j'}\|\Lambda^{\alpha}\dot\Delta_ja\|_{L^2}\|\dot\Delta_{j'}\partial_x v\|_{L^\infty} \\
&\le C2^{-j(s-\alpha)}\|a\|_{\dot{B}^{s}_{2,1}}c_j\sum_{j'\ge j-3}2^{j-j'}\|\dot\Delta_{j'}\partial_x v\|_{L^\infty} \\
&\le C2^{-j(s-\alpha)}\|a\|_{\dot{B}^{s}_{2,1}}\|v_x\|_{\dot{B}^{\frac{1}{2}}_{2,1}}c_j, 
\end{align*}
where $\displaystyle c_j=\frac{2^{j(s-\alpha)}\|\Lambda^{\alpha}\dot\Delta_ja\|_{L^2}}{\|a\|_{\dot{B}^{s}_{2,1}}}$. If $s<\frac{3}{2}$, using Lemma \ref{lemma2.4} leads to
\begin{align*}
\|\Lambda^{\alpha}T_{\partial_x a}v\|_{\dot{B}^{s-\alpha}_{2,1}}&\le C\|T_{\dot\Delta_j\partial_x a}v\|_{\dot{B}^{s}_{2,1}}\le C\|\partial_x a\|_{\dot{B}^{s-\frac{3}{2}}_{\infty,\infty}}\|v\|_{\dot{B}^{\frac{3}{2}}_{2,1}}\le C\|a\|_{\dot{B}^{s}_{2,1}}\|v\|_{\dot{B}^{\frac{3}{2}}_{2,1}}.
\end{align*}
When $s=\frac{3}{2}$, we have
\begin{align*}
\|\Lambda^{\alpha}T_{\partial_x a}v\|_{\dot{B}^{\frac{3}{2}-\alpha}_{2,1}}&\le C\|T_{\dot\Delta_j\partial_x a}v\|_{\dot{B}^{\frac{3}{2}}_{2,1}}\le C\|\partial_x a\|_{L^{\infty}}\|v\|_{\dot{B}^{\frac{3}{2}}_{2,1}}\le C\|a\|_{\dot{B}^{\frac{3}{2}}_{2,1}}\|v\|_{\dot{B}^{\frac{3}{2}}_{2,1}}.
\end{align*}
For the second last term, we obtain
\begin{align*}
\|R(v,\partial_x\Lambda^{\alpha}\dot\Delta_j a)\|_{L^2}\le  &\ C\sum_{|j-j'|\le1}\|(\dot\Delta_{j'-1}+\dot\Delta_{j'}+\dot\Delta_{j'+1})v\|_{L^\infty}\|\dot\Delta_j\partial_x\Delta_{j'}\Lambda^{\alpha} a\|_{L^2}\\
\le&\ C\|\partial_x v\|_{L^\infty}2^{j(\alpha-s)}\sum_{|j-j'|\le1}2^{(j'-j)(\alpha-s)}2^{j's}\|\dot\Delta_{j'} a\|_{L^2}\\
\le&\ C2^{j(\alpha-s)}\|v_x\|_{\dot{B}^{\frac{1}{2}}_{2,1}}\|a\|_{\dot{B}^{s}_{2,1}}c_j. 
\end{align*}
Thanks to $s>-\frac{1}{2}$ and Lemma \ref{lemma2.4.2}, we arrive at 
\begin{align*}
\|\Lambda^{\alpha} R(v,\partial_x a)\|_{\dot{B}^{s-\alpha}_{2,1}}\le &\ C\| R(v,\partial_x a)\|_{\dot{B}^{s}_{2,1}}\le\ C\|R(v,\partial_x a)\|_{\dot{B}^{s+\frac{1}{2}}_{1,1}}\le \ C\|v\|_{\dot{B}^{\frac{3}{2}}_{2,1}}\|\partial_x a\|_{\dot{B}^{s-1}_{2,1}}.
\end{align*}
Inserting the above estimates into (\ref{203}) yields the desired result. The limit case follows by a similar argument. This completes the proof of Lemma \ref{lemma2.7}.
\end{proof}
\section{Global existence and uniqueness to the CNS equations}\label{section3}
In this section, we consider the global regularity for the CNS system (\ref{eq1}). We first consider the case $\frac{1}{2}<\beta<1$. 
First, we derive energy estimates for the low and high frequencies.
\begin{lemm}\label{lemma3.1}
Let $\frac{1}{2}<\beta<1$. Let $(a,u)\in\mathcal{E}_{T}$ be a local strong solution of system (\ref{eq1}) under the assumptions of Theorem \ref{theo1}. Then we have
\begin{align*}
&\frac{d}{dt}\left[\|(a,u)^l\|_{\dot B^{\frac{3}{2}-2\beta}_{2,1}}+\|(\Lambda^{2\beta-1}a,u)\|^{h}_{\dot B^{\frac{3}{2}-2\beta}_{2,1}}\right]+c_0\left[\|\Lambda a\|^{h}_{\dot B^{\frac{3}{2}-2\beta}_{2,1}}+\|u\|^{h}_{\dot B^{\frac{3}{2}}_{2,1}}+\|(a,u)^l\|_{\dot B^{\frac{3}{2}}_{2,1}}\right]\\
\le&\ C\|u_x\|_{\dot B^{\frac{1}{2}}_{2,1}}\|a\|_{\dot B^{\frac{1}{2}}_{2,1}}+C\|F\|^{h}_{\dot B^{\frac{5}{2}-4\beta}_{2,1}}+C\|H\|^{h}_{\dot B^{\frac{3}{2}-2\beta}_{2,1}}+C\|(F,H)^{l}\|_{\dot B^{\frac{3}{2}-2\beta}_{2,1}},
\end{align*}
where $c_0>0$ is a small enough constant. 
\end{lemm}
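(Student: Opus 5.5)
We split the frequencies at $j_0$ and estimate the low- and high-frequency blocks separately. We localize (\ref{eq1}) with $\dot\Delta_j$ and, at low frequencies, work with a Lyapunov functional; at high frequencies we use the effective velocity $w=u+\Lambda^{-2\beta}a_x$.

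\textbf{Low frequencies ($j\le j_0$).} Set $a_j=\dot\Delta_j a$ and $u_j=\dot\Delta_j u$. Take the $L^2$ inner products of the localized equations with $a_j$ and $u_j$, and add a small cross term $\eta\,2^{-2j(1-\beta)}(\partial_x a_j\mid u_j)$. More precisely, we use
\begin{align*}
\mathcal{L}_j^2=\|a_j\|_{L^2}^2+\|u_j\|_{L^2}^2+2\eta\, 2^{j(2\beta-2)}(u_j\mid \partial_x a_j),
\end{align*}
which is suitably normalized by the dissipation rate $2^{2j\beta}$. The cross term produces $\eta 2^{2j\beta}\|a_j\|^2$ dissipation (since $|\xi|^2/|\xi|^{2-2\beta}=|\xi|^{2\beta}$), at the cost of $\eta 2^{2j}\cdot 2^{j(2\beta-2)}\|u_j\|^2$, which is absorbed by $2^{2j\beta}\|u_j\|^2$ for $\eta$ small. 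The term $|\xi|^{2\beta}$ acting on the cross term is harmless because $j\le j_0$. This gives
\begin{align*}
\frac{d}{dt}\mathcal{L}_j+c\,2^{2j\beta}\mathcal{L}_j\le C\|(\dot\Delta_j F,\dot\Delta_j H)\|_{L^2}.
\end{align*}
Multiplying by $2^{j(\frac32-2\beta)}$ and summing over $j\le j_0$ yields the low-frequency part, including the dissipation $\|(a,u)^l\|_{\dot B^{3/2}_{2,1}}$ and the source term $\|(F,H)^l\|_{\dot B^{3/2-2\beta}_{2,1}}$.

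\textbf{High frequencies ($j\ge j_0-1$).} The effective velocity satisfies
\begin{align*}
\partial_t w+\Lambda^{2\beta}w = \Lambda^{-2\beta}\partial_x(F-u_x)+H,
\end{align*}
where $\Lambda^{-2\beta}\partial_x(-u_x)=\Lambda^{2-2\beta}u=\Lambda^{2-4\beta}\Lambda^{2\beta}(w-\Lambda^{-2\beta}a_x)$. At high frequencies this is lower order, by the factor $2^{j(2-4\beta)}\ll1$ since $\beta>\frac12$, provided $j_0$ is large. Substituting $u=w-\Lambda^{-2\beta}a_x$ into the $a$-equation gives
\begin{align*}
\partial_t a+\Lambda^{2-2\beta}a=-w_x+F,
\end{align*}
which is parabolic of order $2-2\beta$. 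The key difficulty is the convection part $-u a_x$ of $F=-(au)_x$, which loses a derivative. We handle it through the transport structure: apply $\Lambda^{2\beta-1}\dot\Delta_j$ (to match the norm $\|\Lambda^{2\beta-1}a\|_{\dot B^{3/2-2\beta}}=\|a\|_{\dot B^{1/2}}$), take the inner product, and write
\begin{align*}
\Lambda^{2\beta-1}\dot\Delta_j(ua_x)=u\,\partial_x\Lambda^{2\beta-1}\dot\Delta_j a-[u\partial_x,\Lambda^{2\beta-1}\dot\Delta_j]a.
\end{align*}
The first term integrates by parts to $\frac12\|u_x\|_{L^\infty}\|\cdot\|^2$. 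The commutator is estimated by Lemma \ref{lemma2.7} with $\alpha=2\beta-1\in[0,1]$ and $s=\frac12$. Together these give the term $C\|u_x\|_{\dot B^{1/2}_{2,1}}\|a\|_{\dot B^{1/2}_{2,1}}$. The remaining part $-a u_x$ of $F$ is placed in $\|F\|^h_{\dot B^{5/2-4\beta}_{2,1}}$; this norm also appears in the $w$-equation source, since $\Lambda^{-2\beta}\partial_xF$ in $\dot B^{3/2-2\beta}$ is $F$ in $\dot B^{5/2-4\beta}$. For this reason we keep the convective part separate and put only the non-convective part of $F$ into that norm.

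\textbf{Combining.} The $w$-estimate gives $\|w\|^h_{\dot B^{3/2}}$ dissipation, with error $2^{-j_0(4\beta-2)}\|w\|^h_{\dot B^{3/2}}+\|a\|^h_{\dot B^{5/2-2\beta}}$-type coupling. The $a$-estimate gives $\|a\|^h_{\dot B^{5/2-2\beta}}$ dissipation with error $\|w\|^h_{\dot B^{3/2}}$. We form the combination $\|w\|^h+\varepsilon\|\Lambda^{2\beta-1}a\|^h$ with $\varepsilon$ small and then $j_0$ large, which closes. Finally we return from $w$ to $u$: $\|u\|^h\le\|w\|^h+\|\Lambda^{1-2\beta}a\|^h$ at the same regularity, and the two functionals are equivalent. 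We work blockwise with $\frac{d}{dt}\|\cdot\|_{L^2}$, dividing by the norm in the standard way. Adding this to the low-frequency estimate, and absorbing the overlap at $j=j_0-1,j_0$ by the choice of constants, gives the stated inequality.

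The main obstacle is balancing the coupling between $w$ and $a$ at high frequencies, and in particular keeping the convective loss isolated in the commutator term.
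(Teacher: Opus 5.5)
Your overall scheme matches the paper's. For $j\le j_0$ you use a hypocoercive functional with the cross term $2^{j(2\beta-2)}(u_j\mid\partial_x a_j)$. For $j\ge j_0-1$ you couple the effective velocity $w=u+\Lambda^{-2\beta}a_x$ with the $\Lambda^{2\beta-1}\dot\Delta_j$-localized equation $\partial_t a+\Lambda^{2-2\beta}a=-w_x+F$. You treat the transport part by integration by parts and the commutator by Lemma~\ref{lemma2.7} with $\alpha=2\beta-1$, $s=\frac12$. Then you take $\varepsilon$ small, $j_0$ large, and pass back from $w$ to $u$. These steps are sound. In the low-frequency part, $\eta$ must be fixed after $j_0$, because the costs you call harmless carry the factors $2^{j_0(4\beta-2)}$ and $2^{j_0(2\beta-1)}$.

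The step that fails as written is your handling of the non-convective piece $-au_x$ in the $a$-equation. That equation is estimated at the level $\|\Lambda^{2\beta-1}a\|^h_{\dot B^{\frac32-2\beta}_{2,1}}\approx\|a\|^h_{\dot B^{\frac12}_{2,1}}$. Since you close it blockwise with $\frac{d}{dt}\|\cdot\|_{L^2}$, the source must be measured at that same level, i.e.\ you need $\|au_x\|^h_{\dot B^{\frac12}_{2,1}}$. This norm is $4\beta-2>0$ derivatives stronger than $\dot B^{\frac52-4\beta}_{2,1}$, so $\|F\|^h_{\dot B^{\frac52-4\beta}_{2,1}}$ cannot bound it. Moreover $au_x$ is not $F$, and writing $au_x=-F-ua_x$ would bring back exactly the convective term you isolated.

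The fix is immediate and is what the paper does. Since $\dot B^{\frac12}_{2,1}(\mathbb{R})$ is an algebra (Lemma~\ref{lemma2.5}), $\|au_x\|_{\dot B^{\frac12}_{2,1}}\le C\|a\|_{\dot B^{\frac12}_{2,1}}\|u_x\|_{\dot B^{\frac12}_{2,1}}$. This term therefore joins the transport and commutator contributions in the first term of the right-hand side. Correspondingly, $\|F\|^h_{\dot B^{\frac52-4\beta}_{2,1}}$ arises only from the source $\Lambda^{-2\beta}\partial_x F$ of the $w$-equation. It contains the full $F=-(au)_x$, convective part included. If you put only the non-convective part of $F$ into that norm, the piece $\Lambda^{-2\beta}\partial_x(ua_x)$ of the $w$-source would be left unaccounted for. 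Bounding that full norm is then the business of Lemma~\ref{lemma3.2}.
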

\begin{proof}The proof consists of two steps.\\
\textbf{Step 1: Low-frequency estimates}\\
Let $f^l=\dot S_{j_{0}}f$ and $f^h=f-f^l$. Taking $\dot\Delta_{j}\dot S_{j_{0}}$ to the system (\ref{eq1}), we have
\begin{align}\label{301}
\left\{\begin{aligned} 
&\partial_{t}a_j^{l}+u_{j,x}^{l}=F_j^{l},\\ 
&\partial_{t}u_j^{l}+(-\Delta)^{\beta}u^{l}_j+a_{j,x}^{l}=H_j^{l}.
\end{aligned}\right.
\end{align}
Taking the standard $L^2$ inner product, we have
\begin{align}\label{302}
\frac{1}{2}\frac{d}{dt}\|(a_j^{l},u_j^{l})\|_{L^2}^{2}+\|\Lambda^{\beta}u_j^{l}\|_{L^{2}}^{2}=\langle F_j^{l},a_j^{l}\rangle+\langle H_j^{l},u_j^{l}\rangle,
\end{align}
\begin{align}\label{303}
\frac{d}{dt}\langle\Lambda^{2\beta-2}a_{j,x}^{l},u_j^{l}\rangle+&\|\Lambda^{\beta}a_j^{l}\|_{L^{2}}^{2}=\langle \Lambda^{2\beta-2}F_{j,x}^{l},u_j^{l}\rangle+\langle\Lambda^{2\beta-2} H_j^{l},a_{j,x}^{l}\rangle\\\nonumber
&+\|\Lambda^{\beta}u_j^{l}\|_{L^{2}}^{2}-\langle\Lambda^{2\beta-2} (-\Delta)^{\beta}u^{l}_j,a_{j,x}^{l}\rangle,
\end{align}
and
\begin{align}\label{304}
\frac{1}{2}\frac{d}{dt}\|\Lambda^{2\beta-2}a_{j,x}^{l}\|_{L^2}^{2}=\langle \Lambda^{2\beta-2}F_{j,x}^{l},\Lambda^{2\beta-2}a_{j,x}^{l}\rangle-\langle \Lambda^{2\beta-2}u_{j,xx}^{l},\Lambda^{2\beta-2}a_{j,x}^{l}\rangle,
\end{align}
Combining above inequalities, we have
\begin{align}
\frac{1}{2}\frac{d}{dt}&\underbrace{\left[2\|(a_j^{l},u_j^{l})\|_{L^2}^{2}+2\langle\Lambda^{2\beta-2}a_{j,x}^{l},u_j^{l}\rangle+\|\Lambda^{2\beta-2}a_{j,x}^{l}\|_{L^2}^{2}\right]}_{L_{j}^{2}}+
\|\Lambda^{\beta}(a_j^{l},u_j^{l})\|_{L^{2}}^{2}\\\nonumber
=&2\langle F_j^{l},a_j^{l}\rangle+2\langle H_j^{l},u_j^{l}\rangle+\langle \Lambda^{2\beta-2}F_{j,x}^{l},u_j^{l}\rangle+\langle\Lambda^{2\beta-2} H_j^{l},a_{j,x}^{l}\rangle+\langle \Lambda^{2\beta-2}F_{j,x}^{l},\Lambda^{2\beta-2}a_{j,x}^{l}\rangle.
\end{align}
Since $j\le j_0$, by Lemma \ref{lemma2.2} it is easy to check
\begin{align*}
L_{j}^{2}\approx \|(a_j^{l},u_j^{l})\|_{L^2}^{2},\quad \|\Lambda^{\beta}(a_j^{l},u_j^{l})\|_{L^{2}}^{2}\approx 2^{2j\beta}\|(a_j^{l},u_j^{l})\|_{L^2}^{2}.
\end{align*}
We end up with 
\begin{align*}
\frac{1}{2}\frac{d}{dt}L_{j}^{2}+c_{0}2^{2j\beta}L_{j}^{2}\le CL_j\|(F_{j}^{l},H_{j}^{l})\|_{L^2},
\end{align*}
which gives rise to
\begin{align}\label{306}
\frac{d}{dt}L_{j}+c_{0}2^{2j\beta}L_{j}\le C\|(F_{j}^{l},H_{j}^{l})\|_{L^2}.
\end{align}
Multiplying (\ref{306}) by $2^{j(\frac{3}{2}-2\beta)}$ and taking the $l^{1}$ norm, we have
\begin{align}\label{307}
\frac{d}{dt}\|(a,u)^l\|_{\dot B^{\frac{3}{2}-2\beta}_{2,1}}+c_{0}\|(a,u)^l\|_{\dot B^{\frac{3}{2}}_{2,1}}\le C\|(F,H)^{l}\|_{\dot B^{\frac{3}{2}-2\beta}_{2,1}}.
\end{align}
\textbf{Step 2: High-frequency estimates}

Let $w=u+\Lambda^{-2\beta}a_x$. It is easy to check that $w$ satisfies
\begin{align}\label{308}
\frac{d}{dt}w+(-\Delta)^{\beta}w=\Lambda^{2-2\beta}w+\Lambda^{-2\beta}F_x+H-\Lambda^{2-4\beta}a_x.
\end{align}
Applying $\dot\Delta_{j}$ to (\ref{308}) and taking the standard $L^2$ inner product, we obtain
\begin{align*}
\frac{1}{2}\frac{d}{dt}\|w_j\|_{L^2}^{2}+c2^{2j\beta}\|w_j\|_{L^2}^{2}\le \left(\|\Lambda^{-2\beta}F_{j,x}+H_j\|_{L^2}+\|\Lambda^{2-2\beta}w_j-\Lambda^{2-4\beta}a_{j,x}\|_{L^2}\right)\|w_j\|_{L^2},
\end{align*}
which gives rise to
\begin{align}\label{3010}
\frac{1}{2}\frac{d}{dt}\|w_j\|_{L^2}+c2^{2j\beta}\|w_j\|_{L^2}\le \|\Lambda^{-2\beta}F_{j,x}+H_j\|_{L^2}+\|\Lambda^{2-2\beta}w_j-\Lambda^{2-4\beta}a_{j,x}\|_{L^2}.
\end{align}
Noting $a$ fulfills
\begin{align}\label{310}
\frac{d}{dt}a+(au)_x+\Lambda^{2-2\beta}a=-w_x.
\end{align}
Applying the operator $\Lambda^{2\beta-1}\dot\Delta_{j}$ to (\ref{310}), we obtain
\begin{align}\label{311}
\frac{d}{dt}\Lambda^{2\beta-1}a_j+u\Lambda^{2\beta-1}a_{j,x}+\Lambda a_{j}=-\Lambda^{2\beta-1}w_x-\Lambda^{2\beta-1}(au_x)_{j}+[u\partial_x,\dot\Delta_{j}\Lambda^{2\beta-1}]a.
\end{align}
Taking the $L^2$ inner product in (\ref{311}), we have
\begin{align*}
\frac{1}{2}\frac{d}{dt}\|\Lambda^{2\beta-1}a_j\|_{L^2}^{2}+\|\Lambda^{\beta} a_j\|_{L^2}^{2}=&\frac{1}{2}\int_{\mathbb{R}}u_{x}(\Lambda^{2\beta-1}a_{j})^{2}dx-\int_{\mathbb{R}}\Lambda ^{2\beta-1}w_{x,j}\Lambda^{2\beta-1}a_{j}dx\\\nonumber
&-\int_{\mathbb{R}}\left(\Lambda^{2\beta-1}(au_x)_{j}+[u\partial_x,\dot\Delta_{j}\Lambda^{2\beta-1}]a\right)\Lambda^{2\beta-1}a_jdx.
\end{align*}
By virtue of Hölder's inequality, we obtain
\begin{align*}
\frac{1}{2}\frac{d}{dt}\|\Lambda^{2\beta-1}a_j\|_{L^2}^{2}+\|\Lambda^{\beta} a_j\|_{L^2}^{2}\le& C\|u_x\|_{L^{\infty}}\|\Lambda^{2\beta-1}a_{j}\|_{L^{2}}^{2}+\|\Lambda ^{2\beta-1}w_{x,j}\|_{L^2}\|\Lambda^{2\beta-1}a_{j}\|_{L^{2}}\\\nonumber
&+\left(\|\Lambda^{2\beta-1}(au_x)_{j}\|_{L^2}+\|[u\partial_x,\dot\Delta_{j}\Lambda^{2\beta-1}]a\|_{L^{2}}\right)\|\Lambda^{2\beta-1}a_j\|_{L^2}.
\end{align*}
Then we achieve
\begin{align}\label{314}
\frac{1}{2}\frac{d}{dt}\|\Lambda^{2\beta-1}a_j\|_{L^2}+c\|\Lambda a_j\|_{L^2}\le& C\|u_x\|_{L^{\infty}}\|\Lambda^{2\beta-1}a_{j}\|_{L^{2}}+\|\Lambda ^{2\beta-1}w_{x,j}\|_{L^2}\\\nonumber
&+\|\Lambda^{2\beta-1}(au_x)_{j}\|_{L^2}+\|[u\partial_x,\dot\Delta_{j}\Lambda^{2\beta-1}]a\|_{L^{2}}.
\end{align}
Combining (\ref{3010}) and (\ref{314}), we obtain
\begin{align}\label{3141}
&\frac{1}{2}\frac{d}{dt}\left[\|w_j\|_{L^2}+\epsilon\|\Lambda^{2\beta-1}a_j\|_{L^2}\right]+c\epsilon\|\Lambda a_j\|_{L^2}+c2^{2j\beta}\|w_j\|_{L^2}\\\nonumber
\le& C\epsilon\|u_x\|_{L^{\infty}}\|\Lambda^{2\beta-1}a_{j}\|_{L^{2}}+\epsilon\|\Lambda ^{2\beta-1}w_{x,j}\|_{L^2}\\\nonumber
&+\epsilon\|\Lambda^{2\beta-1}(au_x)_{j}\|_{L^2}+\epsilon\|[u\partial_x,\dot\Delta_{j}\Lambda^{2\beta-1}]a\|_{L^{2}}\\\nonumber
&+\|\Lambda^{-2\beta}F_{j,x}+H_j\|_{L^2}+\|\Lambda^{2-2\beta}w_j-\Lambda^{2-4\beta}a_{j,x}\|_{L^2}.
\end{align}
Thanks to Lemma \ref{lemma2.2} and the fact $\frac{1}{2}<\beta<1$, we see that
\begin{align*}
\|\Lambda^{2-4\beta}a_{j,x}\|_{L^2}\le C2^{j(2-4\beta)}\|\Lambda a_{j}\|_{L^2}\le C2^{j_{0}(2-4\beta)}\|\Lambda a_{j}\|_{L^2},\quad j\ge j_0-1,
\end{align*}
and
\begin{align*}
 \|\Lambda^{2-2\beta}w_{j}\|_{L^2}\le C2^{j(2-2\beta)}\| w_{j}\|_{L^2}\le C2^{j_{0}(2-4\beta)}2^{2j\beta}\| w_{j}\|_{L^2},\quad j\ge j_0-1.   
\end{align*}
Taking $\epsilon$ small enough and $j_0$ large enough in (\ref{3141}), we have
\begin{align}\label{315}
&\frac{d}{dt}\left[\|w_j\|_{L^2}+\|\Lambda^{2\beta-1}a_j\|_{L^2}\right]+c_0\|\Lambda a_j\|_{L^2}+c_02^{2j\beta}\|w_j\|_{L^2}\\\nonumber
\le& C\epsilon\|u_x\|_{L^{\infty}}\|\Lambda^{2\beta-1}a_{j}\|_{L^{2}}+\epsilon\|\Lambda^{2\beta-1}(au_x)_{j}\|_{L^2}+\epsilon\|[u\partial_x,\dot\Delta_{j}\Lambda^{2\beta-1}]a\|_{L^{2}}\\\nonumber
&+2^{(1-2\beta)j}\|F_{j}\|_{L^2}+\|H_j\|_{L^2}.
\end{align}
Recall
\begin{align*}
u=w-\Lambda^{-2\beta}a_x.
\end{align*}
As $\frac{1}{2}<\beta<1$, we conclude that
\begin{align}\label{316}
&\frac{d}{dt}\left[\|u_j\|_{L^2}+\|\Lambda^{2\beta-1}a_j\|_{L^2}\right]+c_0\|\Lambda a_j\|_{L^2}+c_02^{2j\beta}\|u_j\|_{L^2}\\\nonumber
\le& C\epsilon\|u_x\|_{L^{\infty}}\|\Lambda^{2\beta-1}a_{j}\|_{L^{2}}+\epsilon\|\Lambda^{2\beta-1}(au_x)_{j}\|_{L^2}+\epsilon\|[u\partial_x,\dot\Delta_{j}\Lambda^{2\beta-1}]a\|_{L^{2}}\\\nonumber
&+2^{-2j\beta+j}\|F_{j}\|_{L^2}+\|H_j\|_{L^2}.
\end{align}
Multiplying (\ref{316}) by $2^{(\frac{3}{2}-2\beta)j}$ and taking the $l^1$ norm for $j\ge j_0-1$, we have
\begin{align}\label{317}
&\frac{d}{dt}\|(\Lambda^{2\beta-1}a,u)\|^{h}_{\dot B^{\frac{3}{2}-2\beta}_{2,1}}+c_0\|\Lambda a\|^{h}_{\dot B^{\frac{3}{2}-2\beta}_{2,1}}+c_0\|u\|^{h}_{\dot B^{\frac{3}{2}}_{2,1}}\\\nonumber
\le& C\epsilon\|u_x\|_{\dot B^{\frac{1}{2}}_{2,1}}\|a\|_{\dot B^{\frac{1}{2}}_{2,1}}+\epsilon\|[u\partial_x,\dot\Delta_{j}\Lambda^{2\beta-1}]a\|^{h}_{\dot B^{\frac{3}{2}-2\beta}_{2,1}}+\|F\|^{h}_{\dot B^{\frac{5}{2}-4\beta}_{2,1}}+\|H\|^{h}_{\dot B^{\frac{3}{2}-2\beta}_{2,1}}.
\end{align}
 Combining (\ref{307}) and (\ref{317}) and taking $\alpha=2\beta-1$ and $s=\frac{1}{2}$ in Lemma \ref{lemma2.7}, it shows that
\begin{align}\label{318}
&\frac{d}{dt}\left[\|(a,u)^l\|_{\dot B^{\frac{3}{2}-2\beta}_{2,1}}+\|(\Lambda^{2\beta-1}a,u)\|^{h}_{\dot B^{\frac{3}{2}-2\beta}_{2,1}}\right]+c_{0}\left(\|\Lambda a\|^{h}_{\dot B^{\frac{3}{2}-2\beta}_{2,1}}+\|u\|^{h}_{\dot B^{\frac{3}{2}}_{2,1}}+\|(a,u)^l\|_{\dot B^{\frac{3}{2}}_{2,1}}\right)\\\nonumber
\le& C\epsilon\|u_x\|_{\dot B^{\frac{1}{2}}_{2,1}}\|a\|_{\dot B^{\frac{1}{2}}_{2,1}}+\epsilon\|[u\partial_x,\dot\Delta_{j}\Lambda^{2\beta-1}]a\|^{h}_{\dot B^{\frac{3}{2}-2\beta}_{2,1}}+\|F\|^{h}_{\dot B^{\frac{5}{2}-4\beta}_{2,1}}+\|H\|^{h}_{\dot B^{\frac{3}{2}-2\beta}_{2,1}}+C\|(F,H)^{l}\|_{\dot B^{\frac{3}{2}-2\beta}_{2,1}}.\\\nonumber
\le& C\epsilon\|u_x\|_{\dot B^{\frac{1}{2}}_{2,1}}\|a\|_{\dot B^{\frac{1}{2}}_{2,1}}+\|F\|^{h}_{\dot B^{\frac{5}{2}-4\beta}_{2,1}}+\|H\|^{h}_{\dot B^{\frac{3}{2}-2\beta}_{2,1}}+C\|(F,H)^{l}\|_{\dot B^{\frac{3}{2}-2\beta}_{2,1}}.
\end{align}
This completes the proof of Lemma \ref{lemma3.1}.
\end{proof}

We are now in a position to derive the global energy estimate.
\begin{lemm}\label{lemma3.2}
Let $\frac{1}{2}<\beta<1$. Let $(a,u)\in\mathcal{E}_{T}$ be a local strong solution of system (\ref{eq1}) under the assumptions of Theorem \ref{theo1}. Then we have
\begin{align*}
\frac{d}{dt}\Bigg[\|(a,u)^l\|_{\dot B^{\frac{3}{2}-2\beta}_{2,1}}+&\|(\Lambda^{2\beta-1}a,u)\|^{h}_{\dot B^{\frac{3}{2}-2\beta}_{2,1}}\Bigg]\\\nonumber
&+c_0\left(\|\Lambda a\|^{h}_{\dot B^{\frac{3}{2}-2\beta}_{2,1}}+\|u\|^{h}_{\dot B^{\frac{3}{2}}_{2,1}}+\|(a,u)^l\|_{\dot B^{\frac{3}{2}}_{2,1}}\right)
\le 0.
\end{align*}
\end{lemm}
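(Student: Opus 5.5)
The plan is to start from Lemma \ref{lemma3.1} and show that every term on its right-hand side is bounded by $C\,\mathcal{X}(t)\,\mathcal{D}(t)$. Here
\[
\mathcal{X}(t)=\|(a,u)^l\|_{\dot B^{\frac{3}{2}-2\beta}_{2,1}}+\|(\Lambda^{2\beta-1}a,u)\|^{h}_{\dot B^{\frac{3}{2}-2\beta}_{2,1}}
\]
is the energy and
\[
\mathcal{D}(t)=\|\Lambda a\|^{h}_{\dot B^{\frac{3}{2}-2\beta}_{2,1}}+\|u\|^{h}_{\dot B^{\frac{3}{2}}_{2,1}}+\|(a,u)^l\|_{\dot B^{\frac{3}{2}}_{2,1}}
\]
is the dissipation. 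A bootstrap on the smallness of $\mathcal{X}$ then absorbs these terms into the left-hand side. Since the initial energy is at most $c$, we assume on $[0,T]$ that $\sup_t\mathcal{X}(t)\le 2\sqrt{c}$, or some other small threshold. This gives $\|a\|_{L^\infty}\le C\|a\|_{\dot B^{1/2}_{2,1}}\le 1/2$, so $G$ and $K$ are smooth functions of $a$ and the composition estimates of Lemma \ref{lemma2.6} apply. Once $C\mathcal{X}\mathcal{D}\le \frac{c_0}{2}\mathcal{D}$, the differential inequality with dissipation $\frac{c_0}{2}\mathcal{D}$ shows that $\mathcal{X}$ is nonincreasing. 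This closes the bootstrap and gives the stated inequality after renaming $c_0$.

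The first step is to record the norms of $a$ and $u$ that are controlled by $\mathcal{X}$ and $\mathcal{D}$.
\begin{itemize}
\item At high frequency, $\|a\|^h_{\dot B^{1/2}_{2,1}}\approx\|\Lambda^{2\beta-1}a\|^h_{\dot B^{\frac32-2\beta}_{2,1}}$. At low frequency, $\|a\|^l_{\dot B^{1/2}_{2,1}}$ interpolates between $\dot B^{\frac32-2\beta}$ and $\dot B^{\frac32}$ (recall $\frac32-2\beta<\frac12<\frac32$), so it is bounded by $\mathcal{X}+\mathcal{D}$. In fact $\|a\|_{\dot B^{1/2}_{2,1}}\lesssim \mathcal{X}$, because the low-frequency $\dot B^{\frac12}$ norm is bounded by $2^{j_0(2\beta-1)}$ times the $\dot B^{\frac32-2\beta}$ norm.
\item $\|u_x\|_{\dot B^{1/2}_{2,1}}=\|u\|_{\dot B^{3/2}_{2,1}}\lesssim\mathcal{D}$.
\item $\|a\|^h_{\dot B^{\frac52-2\beta}_{2,1}}\lesssim\mathcal{D}$ and $\|a\|^l_{\dot B^{\frac32}_{2,1}}\lesssim\mathcal{D}$.
\item $\|u\|_{\dot B^{\frac32-2\beta}_{2,1}}\lesssim\mathcal{X}$.
\end{itemize}
With these, the first term on the right of Lemma \ref{lemma3.1} is immediately bounded by $C\mathcal{X}\mathcal{D}$.

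The second step estimates $F=-(au)_x$. At low frequency, $\|(au)_x\|^l_{\dot B^{\frac32-2\beta}}\lesssim \|au\|_{\dot B^{\frac52-2\beta}}$. Since $\frac52-2\beta\in(\frac12,\frac32)$ is positive, Lemma \ref{lemma2.5} applies, and the product splits into $\|a\|_{L^\infty}\|u\|_{\dot B^{\frac52-2\beta}}$ plus $\|u\|_{L^\infty}\|a\|_{\dot B^{\frac52-2\beta}}$. For the first piece, interpolation gives $\|u\|_{\dot B^{\frac52-2\beta}}\le\|u\|_{\dot B^{\frac32-2\beta}}^{\theta}\|u\|_{\dot B^{\frac32}}^{1-\theta}$. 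Used in this form, however, it does not produce a product $\mathcal{X}\mathcal{D}$ directly, so I would instead use \eqref{201} on $u_xa$ and $ua_x$ separately:
\[
\|u_xa\|_{\dot B^{\frac32-2\beta}}\lesssim\|a\|_{\dot B^{\frac12}}\|u_x\|_{\dot B^{\frac32-2\beta}}.
\]
This is valid because $\frac32-2\beta>-\frac12$. The term $ua_x$ is delicate at high frequency for $a$. Here I would use $\|a_x\|^h_{\dot B^{\frac32-2\beta}}\lesssim\mathcal{D}$ together with $\|u\|_{\dot B^{1/2}}$. At low frequency I would use $\|a_x\|^l_{\dot B^{\frac12}}\lesssim\mathcal{D}$ together with $\|u\|_{\dot B^{\frac32-2\beta}}$, applying \eqref{201} with the roles reversed. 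For the high-frequency norm $\|F\|^h_{\dot B^{\frac52-4\beta}}$, the weight $\frac52-4\beta\le\frac32-2\beta$ lets high frequencies be bounded by the $\dot B^{\frac32-2\beta}$ norm, and the same product bounds then apply.

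The third step treats $H$ in $\dot B^{\frac32-2\beta}_{2,1}$, using \eqref{201} with regularity index $\frac32-2\beta\in(-\frac12,\frac12)$.
\begin{itemize}
\item $uu_x$: bounded by $\|u\|_{\dot B^{1/2}}\|u_x\|_{\dot B^{\frac32-2\beta}}$, and interpolation puts one factor in $\mathcal{X}$ and the other in $\mathcal{D}$.
\item $G(a)(-\Delta)^\beta u$: bounded by $\|G(a)\|_{\dot B^{1/2}}\|u\|_{\dot B^{\frac32}}\lesssim\mathcal{X}\mathcal{D}$.
\item $K(a)a_x$: bounded by $\|K(a)\|_{\dot B^{1/2}}\|a_x\|_{\dot B^{\frac32-2\beta}}$. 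The last norm is $\lesssim\mathcal{D}$ at high frequency; at low frequency it interpolates between $\mathcal{X}$ and $\mathcal{D}$.
\end{itemize}

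I expect the main obstacle to be the terms that are \emph{not} naturally of the form $\mathcal{X}\mathcal{D}$. These include the low-frequency part of $\|u\|_{\dot B^{1/2}}$ multiplied by the high-frequency part of $a_x$, and the interpolated quantities that come out as $\mathcal{X}^{\theta}\mathcal{D}^{1-\theta}$. The remedy I would try first is to pair the exponents so that every product has total regularity $\frac32-2\beta+\frac32$, which is exactly one energy norm plus one dissipation norm. Mixed interpolants can then be bounded by Young's inequality, for instance
\[
\mathcal{X}^{\theta}\mathcal{D}^{1-\theta}\cdot\mathcal{X}^{1-\theta}\mathcal{D}^{\theta}=\mathcal{X}\mathcal{D}.
\]
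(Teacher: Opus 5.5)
Your architecture matches the paper's: start from Lemma \ref{lemma3.1}, bound each right-hand term by $C\mathcal X\mathcal D$, then bootstrap. Your bounds for $H$ and for $a\,u_x$ can be made to work. The gap is the convection piece $u\,\partial_x a^h$ of $F$, which you propose to control by $\|u\|_{\dot B^{\frac12}_{2,1}}\|\partial_x a^h\|_{\dot B^{\frac32-2\beta}_{2,1}}$ at both low and high frequencies. For the $u^h$ component this does not close. The norm $\|u^h\|_{\dot B^{\frac12}_{2,1}}$ is \emph{not} controlled by $\mathcal X$, because the index $\frac12$ exceeds the energy index $\frac32-2\beta$ of $u^h$. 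Meanwhile $\|a^h\|_{\dot B^{\frac52-2\beta}_{2,1}}$ is already pure dissipation. The best you get is
\[
\|u^h\|_{\dot B^{\frac12}_{2,1}}\|\partial_x a^h\|_{\dot B^{\frac32-2\beta}_{2,1}}\lesssim \mathcal X^{\frac{1}{2\beta}}\mathcal D^{2-\frac{1}{2\beta}},
\]
which is superlinear in $\mathcal D$ and cannot be absorbed. Your ``total regularity $3-2\beta$'' bookkeeping hides this because $u$ and $a^h$ have different gains ($2\beta$ versus $2-2\beta$). The index pair $(\frac12,\frac52-2\beta)$ has the right total but uses more than one full dissipation norm. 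You also placed the obstacle in the wrong factor: $\|u^l\|_{\dot B^{\frac12}_{2,1}}\le C2^{j_0(2\beta-1)}\mathcal X$, so $u^l\,\partial_x a^h$ is harmless.

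At high frequencies the problem is worse, because you replaced $\|F\|^h_{\dot B^{\frac52-4\beta}_{2,1}}$ by the stronger $\|F\|^h_{\dot B^{\frac32-2\beta}_{2,1}}$. In $T_{u^h}\partial_x a^h$ one has $\|\dot S_{j-1}u^h\|_{L^\infty}\lesssim 2^{j(2\beta-1)}\mathcal X$. A $\dot B^{\frac32-2\beta}_{2,1}$ bound would therefore need $a^h\in\dot B^{\frac32}_{2,1}$, whereas $\mathcal D$ only reaches $\dot B^{\frac52-2\beta}_{2,1}$, and $\frac52-2\beta<\frac32$. Wave packets with $u$ at frequency $2^k$ and $a$ at $2^m$, $m\gg k\ge j_0$, make $\|u\,a_x\|^h_{\dot B^{\frac32-2\beta}_{2,1}}/(\mathcal X\mathcal D)$ unbounded. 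So the inequality you need is false, not merely hard to prove.

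The index $\frac52-4\beta$ is exactly the room required. For $\beta<\frac34$ the paper uses $\dot B^{\frac32-2\beta}_{2,1}\times\dot B^{\frac32-2\beta}_{2,1}\to\dot B^{\frac52-4\beta}_{2,1}$, giving $\|a^h_xu\|^h_{\dot B^{\frac52-4\beta}_{2,1}}\lesssim\|a^h\|_{\dot B^{\frac52-2\beta}_{2,1}}\|u\|_{\dot B^{\frac32-2\beta}_{2,1}}$. For $\frac34\le\beta<1$ it keeps the divergence form and bounds $\|a^hu\|^h_{\dot B^{\frac72-4\beta}_{2,1}}\lesssim\|a^h\|_{\dot B^{\frac12}_{2,1}}\|u\|_{\dot B^{\frac72-4\beta}_{2,1}}$.

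At low frequencies the fix is to split $u=u^l+u^h$ and keep $(au^h)_x$ in divergence form, since there the derivative only costs $2^{j_0}$:
\[
\|(au^h)_x\|^l_{\dot B^{\frac32-2\beta}_{2,1}}\lesssim\|a\|_{\dot B^{\frac12}_{2,1}}\|u^h\|_{\dot B^{\frac12}_{2,1}}\lesssim\mathcal X\mathcal D.
\]
Only $a^h_xu^l$ then needs \eqref{201}, with $\|u^l\|_{\dot B^{\frac12}_{2,1}}\lesssim\mathcal X$.
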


\begin{proof}
Let
\begin{align*}
E(t)&=\|(a,u)^l\|_{\dot B^{\frac{3}{2}-2\beta}_{2,1}}+\|(\Lambda^{2\beta-1}a,u)\|^{h}_{\dot B^{\frac{3}{2}-2\beta}_{2,1}},\\
D(t)&=\|\Lambda a\|^{h}_{\dot B^{\frac{3}{2}-2\beta}_{2,1}}+\|u\|^{h}_{\dot B^{\frac{3}{2}}_{2,1}}+\|(a,u)^l\|_{\dot B^{\frac{3}{2}}_{2,1}}.
\end{align*}
Before proceeding further, we assume a priori that
\begin{align*}
E(t)\le c,
\end{align*}
where $c>0$ is a small enough constant.

In order to close the energy estimate, we now estimate the nonlinear terms. Taking $s_2=\frac{3}{2}-2\beta$ in Lemma \ref{lemma2.5}, we obtain the following product laws
\begin{align}\label{3181}
\dot B^{\frac{3}{2}-2\beta}_{2,1}\times\dot B^{\frac{1}{2}}_{2,1}\to\dot B^{\frac{3}{2}-2\beta}_{2,1},
\end{align}
 which will be used frequently. We first deal with the high-frequency case. When $\frac{1}{2}<\beta<\frac{3}{4}$, we decompose $(au)_{x}=au_x+a^{l}_xu^{l}+a^{l}_xu^{h}+a^{h}_xu$. We can easily deduce from Lemma \ref{lemma2.5} that
\begin{align*}
\|au_x\|^{h}_{\dot B^{\frac{5}{2}-4\beta}_{2,1}}\le C\|au_x\|^{h}_{\dot B^{\frac{1}{2}}_{2,1}}\le C\|a\|_{\dot B^{\frac{1}{2}}_{2,1}}\|u_x\|_{\dot B^{\frac{1}{2}}_{2,1}}\le cCD,
\end{align*}
and
\begin{align*}
\|a_x^lu^l\|^{h}_{\dot B^{\frac{5}{2}-4\beta}_{2,1}}\le C\|a_x^lu^l\|^{h}_{\dot B^{\frac{1}{2}}_{2,1}}\le C\|a_x^l\|_{\dot B^{\frac{1}{2}}_{2,1}}\|u^l\|_{\dot B^{\frac{1}{2}}_{2,1}}\le cCD.
\end{align*}
Thanks to $\frac{1}{2}<\beta<\frac{3}{4}$, by Lemma \ref{lemma2.4} and Lemma \ref{lemma2.4.2}, we have
\begin{align*}
\|a_x^hu\|^{h}_{\dot B^{\frac{5}{2}-4\beta}_{2,1}}\le C\|a_x^h\|_{\dot B^{\frac{3}{2}-2\beta}_{2,1}}\|u\|_{\dot B^{\frac{3}{2}-2\beta}_{2,1}}\le cCD,
\end{align*}
where we use the following product law
\begin{align*}
\dot B^{\frac{3}{2}-2\beta}_{2,1}\times\dot B^{\frac{3}{2}-2\beta}_{2,1}\to\dot B^{\frac{5}{2}-4\beta}_{2,1}.
\end{align*}
Taking $s_2=\frac{5}{2}-4\beta$ in (\ref{201}), we obtain
\begin{align*}
\|a_x^lu^h\|^{h}_{\dot B^{\frac{5}{2}-4\beta}_{2,1}}\le C\|a_x^l\|_{\dot B^{\frac{1}{2}}_{2,1}}\|u^h\|_{\dot B^{\frac{5}{2}-4\beta}_{2,1}}\le cCD.
\end{align*}
Combining above inequalities yields
\begin{align}\label{3191}
\|F\|^{h}_{\dot B^{\frac{5}{2}-4\beta}_{2,1}}\le cCD.
\end{align}
When $\frac{3}{4}\le\beta<1$, we notice that $-\frac{1}{2}<\frac{7}{2}-4\beta\le\frac{1}{2}$. Then by Lemma \ref{lemma2.5}, we have the following product law:
\begin{align*}
\dot B^{\frac{7}{2}-4\beta}_{2,1}\times\dot B^{\frac{1}{2}}_{2,1}\to\dot B^{\frac{7}{2}-4\beta}_{2,1}.
\end{align*}
Hence, we write $au=a^hu+a^lu^h+a^lu^l$. Using the above product laws, we obtain
\begin{align*}
\|a^h u\|^{h}_{\dot B^{\frac{7}{2}-4\beta}_{2,1}}\le C\|a^h\|_{\dot B^{\frac{1}{2}}_{2,1}}\left(\|u^l\|_{\dot B^{\frac{7}{2}-4\beta}_{2,1}}+\|u^h\|_{\dot B^{\frac{7}{2}-4\beta}_{2,1}}\right)\le cCD,
\end{align*}
and
\begin{align*}
\|a^l u^h\|^{h}_{\dot B^{\frac{7}{2}-4\beta}_{2,1}}\le C\|a^l\|_{\dot B^{\frac{1}{2}}_{2,1}}\|u^h\|_{\dot B^{\frac{7}{2}-4\beta}_{2,1}}\le cCD,
\end{align*}
For the last term, using Lemma \ref{lemma2.5} again, we obtain
\begin{align*}
\|a^l u^l\|^{h}_{\dot B^{\frac{7}{2}-4\beta}_{2,1}}\le C\|a^l u^l\|^{h}_{\dot B^{\frac{3}{2}}_{2,1}}\le C\|a^l\|_{\dot B^{\frac{3}{2}}_{2,1}}\|u^l\|_{L^{\infty}}+C\|u^l\|_{\dot B^{\frac{3}{2}}_{2,1}}\|a^l\|_{L^{\infty}}\le cCD,
\end{align*}
Therefore, we conclude that when $\frac{3}{4}\le\beta<1$,
\begin{align}\label{3192}
\|F\|^{h}_{\dot B^{\frac{5}{2}-4\beta}_{2,1}}\le cCD.
\end{align}
Recall $H=K(a)a_x-uu_x+\frac{a}{1+a}(-\Delta)^{\beta}u$, we first estimate the term $K(a)a_x$, 
 it follows from (\ref{3181}) that
 \begin{align}\label{3201}
\|K(a)a_x\|^{h}_{\dot B^{\frac{3}{2}-2\beta}_{2,1}}&\le \|K(a)a_x^{l}\|_{\dot B^{\frac{3}{2}-2\beta}_{2,1}}+\|K(a)a_x^{h}\|_{\dot B^{\frac{3}{2}-2\beta}_{2,1}}\\\nonumber
&\le C\|a\|_{\dot B^{\frac{3}{2}-2\beta}_{2,1}}\|a_x^l\|_{\dot B^{\frac{1}{2}}_{2,1}}+C\|a\|_{\dot B^{\frac{1}{2}}_{2,1}}\|a^h_x\|_{\dot B^{\frac{3}{2}-2\beta}_{2,1}}\\\nonumber
&\le cCD.
 \end{align}
The remaining two terms can be treated as follows
\begin{align}\label{320}
\left\|-uu_x+G(a)(-\Delta)^{\beta}u\right\|^{h}_{\dot B^{\frac{3}{2}-2\beta}_{2,1}}\le C\|u\|_{\dot B^{\frac{3}{2}-2\beta}_{2,1}}\|u\|_{\dot B^{\frac{3}{2}}_{2,1}}+C\|a\|_{\dot B^{\frac{1}{2}}_{2,1}}\|u\|_{\dot B^{\frac{3}{2}}_{2,1}},
\end{align}
Therefore, by (\ref{3201}) and (\ref{320}),  we arrive at 
\begin{align}\label{321}
\|H\|^{h}_{\dot B^{\frac{3}{2}-2\beta}_{2,1}}\le cCD.
\end{align}
Now we are in a position to estimate the nonlinear terms at low frequencies. For $F$, we decompose $(au)_{x}=(au^l)_x+(au^h)_x$. Lemma \ref{lemma2.5} ensures that
\begin{align*}
\|(au^h)_x^{l}\|_{\dot B^{\frac{3}{2}-2\beta}_{2,1}}\le C\|au^h\|_{\dot B^{\frac{1}{2}}_{2,1}}\le C\|a\|_{\dot B^{\frac{1}{2}}_{2,1}}\|u^h\|_{\dot B^{\frac{1}{2}}_{2,1}}\le cCD.
\end{align*}
and due to (\ref{3181}) again, we have
\begin{align*}
\|(au^l)_x^l\|_{\dot B^{\frac{3}{2}-2\beta}_{2,1}}&\le C\left(\|(a_xu^l)^l\|_{\dot B^{\frac{3}{2}-2\beta}_{2,1}}+\|(au_x^l)^l\|_{\dot B^{\frac{3}{2}-2\beta}_{2,1}}\right)\\
&\le C\left(\|a^l_x\|_{\dot B^{\frac{1}{2}}_{2,1}}\|u^l\|_{\dot B^{\frac{3}{2}-2\beta}_{2,1}}+\|a^h_x\|_{\dot B^{\frac{3}{2}-2\beta}_{2,1}}\|u^l\|_{\dot B^{\frac{1}{2}}_{2,1}}+\|a\|_{\dot B^{\frac{3}{2}-2\beta}_{2,1}}\|u\|_{\dot B^{\frac{3}{2}}_{2,1}}\right)\\
&\le cCD.
\end{align*}
Hence, we obtain
\begin{align}\label{322}
\|F^{l}\|_{\dot B^{\frac{3}{2}-2\beta}_{2,1}}\le cCD.
\end{align}
For the nonlinear term $H$, according to (\ref{3201}) and (\ref{320}), we conclude that  
\begin{align}\label{323}
\|H^{l}\|_{\dot B^{\frac{3}{2}-2\beta}_{2,1}}\le cCD.
\end{align}
Combining the estimates (\ref{3191}), (\ref{3192}),  (\ref{321}), (\ref{322}) and (\ref{323}), we conclude that
\begin{align}\label{324}
\frac{d}{dt}\Bigg[\|(a,u)^l\|_{\dot B^{\frac{3}{2}-2\beta}_{2,1}}+&\|(\Lambda^{2\beta-1}a,u)\|^{h}_{\dot B^{\frac{3}{2}-2\beta}_{2,1}}\Bigg]\\\nonumber
&+c_{0}\left(\|\Lambda a\|^{h}_{\dot B^{\frac{3}{2}-2\beta}_{2,1}}+\|u\|^{h}_{\dot B^{\frac{3}{2}}_{2,1}}+\|(a,u)^l\|_{\dot B^{\frac{3}{2}}_{2,1}}\right)
\le 0.
\end{align}
A standard bootstrap argument then yields Lemma \ref{lemma3.2}.
\end{proof}

We now state a uniqueness result for equations (\ref{eq1}).
\begin{prop}\label{prop3.3}
Under the same conditions as in Lemmas \ref{lemma3.1}-\ref{lemma3.2}, if additionally $\frac{1}{2}<\beta<\frac{3}{4}$, then the global solution of system (\ref{eq1}) is unique.
\end{prop}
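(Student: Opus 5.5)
Let $(a_1,u_1)$ and $(a_2,u_2)$ be two solutions in $\mathcal{E}$ with the same data, both satisfying the smallness bound of Lemma \ref{lemma3.2}. Set $\delta a=a_1-a_2$ and $\delta u=u_1-u_2$. The difference solves
\begin{align*}
\left\{\begin{aligned}
&\partial_t\delta a+u_2\,\delta a_x+\delta u_x=\delta F,\\
&\partial_t\delta u+(-\Delta)^{\beta}\delta u+\delta a_x=\delta H,
\end{aligned}\right.
\end{align*}
where $\delta F=-\delta u\,a_{1,x}-a_1\delta u_x-\delta a\,u_{2,x}$. The term $\delta H$ collects $K(a_1)\delta a_x$, $(K(a_1)-K(a_2))a_{2,x}$, $-u_1\delta u_x-\delta u\,u_{2,x}$, $G(a_1)(-\Delta)^\beta\delta u$ and $(G(a_1)-G(a_2))(-\Delta)^\beta u_2$.

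The plan is to close an estimate one derivative below the existence level. We put $\delta a^h$ in $\dot B^{-\frac12}_{2,1}$, or rather $\Lambda^{2\beta-1}\delta a^h$ in $\dot B^{\frac12-2\beta}_{2,1}$, and $(\delta a,\delta u)^l$, $\delta u^h$ in $\dot B^{\frac12-2\beta}_{2,1}$. Since $\beta<\frac34$, we have $\frac12-2\beta>-1$. The difference norm is therefore
\[
\delta E(t)=\|(\delta a,\delta u)^l\|_{\dot B^{\frac12-2\beta}_{2,1}}+\|(\Lambda^{2\beta-1}\delta a,\delta u)\|^h_{\dot B^{\frac12-2\beta}_{2,1}},
\]
with dissipation
\[
\delta D=\|\Lambda\delta a\|^h_{\dot B^{\frac12-2\beta}_{2,1}}+\|\delta u\|^h_{\dot B^{\frac12}_{2,1}}+\|(\delta a,\delta u)^l_{\,}\|_{\dot B^{\frac12}_{2,1}}.
\]
The key gain is that $\delta a^h$ is then controlled in $\widetilde L^1(\dot B^{\frac32-2\beta}_{2,1})$, a positive-regularity space. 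For the nonlinear terms this means $\delta a$ in $L^1(\dot B^{\frac12}_{2,1})$ up to low frequencies, precisely because $\frac32-2\beta>0$. This is where the dissipation of order $\frac52-2\beta>1$ noted in the introduction compensates the derivative lost in $u_2\delta a_x$.

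The linear part is Lemma \ref{lemma3.1} repeated verbatim at regularity shifted by $-1$. The low-frequency step uses the $L_j$ functional. The high-frequency step uses the effective velocity $\delta w=\delta u+\Lambda^{-2\beta}\delta a_x$ and the transported equation for $\Lambda^{2\beta-1}\delta a_j$. We keep the convection $u_2\partial_x$ on the left, integrate by parts (losing only $\|u_{2,x}\|_{L^\infty}$), and treat the commutator $[u_2\partial_x,\Lambda^{2\beta-1}\dot\Delta_j]\delta a$ with Lemma \ref{lemma2.7}. There we take $\alpha=2\beta-1$ and $s=2\beta-\frac12\in(\frac12,1)$, which gives $2^{-j(\frac12-2\beta)}$ up to sign conventions, so that $s-\alpha=\frac12$ and the indices match. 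This yields
\[
\frac{d}{dt}\delta E+c_0\delta D\le C\|u_{2,x}\|_{\dot B^{\frac12}_{2,1}}\|\delta a\|_{\dot B^{2\beta-\frac12}}\text{-type terms}+\|\delta F\|^h_{\dot B^{\frac32-4\beta}_{2,1}}+\|\delta H\|_{\dot B^{\frac12-2\beta}_{2,1}}+\|\delta F^l\|_{\dot B^{\frac12-2\beta}_{2,1}}.
\]
If the commutator index does not fit directly, I would instead estimate $\delta a$ at index $-\frac12$ using the endpoint case of Lemma \ref{lemma2.7}.

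The nonlinear terms are handled by Lemmas \ref{lemma2.5}--\ref{lemma2.6} with $s_2=\frac12-2\beta\in(-1,-\frac12)$. This is below the range of \eqref{201}, and it is the main obstacle. I would try two routes.
\begin{itemize}
\item Measure $\delta F^h$ and $\delta H$ in $\dot B^{-\frac12}_{2,\infty}$ or $\dot B^{-\frac12}_{2,1}$, using \eqref{202} and the limit case of Lemma \ref{lemma2.6}, and then embed into the lower index at high frequencies via $2^{j(\frac12-2\beta)}\le C2^{-j/2}$ for $j\ge j_0$.
\item At low frequencies, use the remainder estimate of Lemma \ref{lemma2.4.2} in $\dot B^{s}_{1,\infty}$ together with $\dot B^{s+\frac12}_{1,\infty}\hookrightarrow\dot B^{s}_{2,\infty}$.
\end{itemize}
This will likely force the estimate to be done in $\dot B^{\cdot}_{2,\infty}$ and closed with a logarithmic (Osgood-type) interpolation, in the spirit of Danchin's uniqueness arguments. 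Each product pairs a difference factor carrying at most the dissipated regularity with a coefficient in $\dot B^{\frac12}_{2,1}$ or $L^1(\dot B^{\frac32}_{2,1})$, the latter coming from $u_i$, $a_i^l$ and $a_i^h\in L^1(\dot B^{\frac52-2\beta})$. Smallness of $a_i$ absorbs the top-order terms $a_1\delta u_x$, $G(a_1)(-\Delta)^\beta\delta u$ and $K(a_1)\delta a_x$. The rest are bounded by $Z(t)\,\delta E$ with $Z\in L^1(\mathbb{R}^+)$.

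Gronwall's inequality (or Osgood's lemma) with $\delta E(0)=0$ then gives $\delta E\equiv0$ on every $[0,T]$.
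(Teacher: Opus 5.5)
Your proposal has a genuine gap. It sits exactly at the step you call the main obstacle, and neither fallback route removes it. The level you choose, $(\delta a,\delta u)^l,\delta u^h\in\dot B^{\frac12-2\beta}_{2,1}$ and $\delta a^h\in\dot B^{-\frac12}_{2,1}$, is the one the paper's introduction singles out as unusable, since $\frac12-2\beta<-\frac12$ for $\beta>\frac12$. The failure is at low frequencies, and it is a power loss.

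Consider the terms of $\delta H$ that are not in divergence form, $G(a_1)(-\Delta)^{\beta}\delta u$ and $(G(a_1)-G(a_2))(-\Delta)^{\beta}u_2$; you intend to absorb the first by smallness.
\begin{itemize}
\item For $j\le j_0$, the high--high$\to$low part of $G(a_1)(-\Delta)^{\beta}\delta u$ is only bounded by $C2^{j/2}\sum_{k\ge j_0}\|\dot\Delta_kG(a_1)\|_{L^2}\|\dot\Delta_k(-\Delta)^{\beta}\delta u\|_{L^2}$. This is a $j$-independent multiple of $2^{j/2}$, so its contribution to $\|\cdot\|^l_{\dot B^{\frac12-2\beta}_{2,1}}$ is a multiple of $\sum_{j\le j_0}2^{j(1-2\beta)}=\infty$. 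For divergence-form terms an extra factor $2^j$ would save you, but not here.
\item This is not an artifact of the estimate: an integrable function with nonzero mean satisfies $\|\dot\Delta_jf\|_{L^2}\approx c\,2^{j/2}$ as $j\to-\infty$.
\item Your first route, embedding via $2^{j(\frac12-2\beta)}\le C2^{-j/2}$, is valid only for $j\ge j_0$.
\item Your second route requires $s_1+s_2\ge0$ in Lemma \ref{lemma2.4.2}, but every pairing available from $\mathcal E$ and from $\delta E,\delta D$ gives $s_1+s_2\le1-2\beta<0$. Even an output in $\dot B^{-\frac12}_{2,\infty}$ would not control the low-frequency $\dot B^{\frac12-2\beta}_{2,1}$ norm, because at low frequencies the lower index is the stronger one.
\item A loss of a positive power of $2^{-j}$ cannot be recovered by switching to $\dot B^{s}_{2,\infty}$-type norms and an Osgood-type argument, which handles only logarithmic losses.
\end{itemize}
Two auxiliary claims are also off. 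Lemma \ref{lemma2.7} with $\alpha=2\beta-1$, $s=2\beta-\frac12$ yields $2^{-j/2}$, not $2^{-j(\frac12-2\beta)}$; matching forces the $\ell^\infty$ endpoint $s=-\frac12$. And $\delta a^h\in\widetilde L^1(\dot B^{\frac32-2\beta}_{2,1})$ does not give $L^1(\dot B^{\frac12}_{2,1})$, since $\frac32-2\beta<\frac12$.

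What is missing is the right intermediate level. The paper goes down only half a derivative and closes the estimate for
\[
\widetilde E=\|(\delta a,\delta u)^l\|_{\dot B^{1-2\beta}_{2,1}}+\|(\Lambda^{2\beta-1}\delta a,\delta u)\|^h_{\dot B^{1-2\beta}_{2,1}},\qquad
\widetilde D=\|\Lambda\delta a\|^h_{\dot B^{1-2\beta}_{2,1}}+\|\delta u\|^h_{\dot B^{1}_{2,1}}+\|(\delta a,\delta u)^l\|_{\dot B^{1}_{2,1}},
\]
that is, $\delta a^h\in\dot B^0_{2,1}$ with dissipation in $L^1(\dot B^{2-2\beta}_{2,1})$. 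This is where $\beta<\frac34$ genuinely enters; in your plan it only ensured $\frac12-2\beta>-1$, which plays no role.
\begin{itemize}
\item It gives $1-2\beta>-\frac12$ and $\frac32-2\beta>0$, hence the product laws $\dot B^{1-2\beta}_{2,1}\times\dot B^{\frac12}_{2,1}\to\dot B^{1-2\beta}_{2,1}$ and $\dot B^{\frac32-2\beta}_{2,1}\times\dot B^{0}_{2,1}\to\dot B^{1-2\beta}_{2,1}$.
\item It makes Lemma \ref{lemma2.7} applicable with $s=0$, $\alpha=2\beta-1$.
\item The derivative lost in $u_2\delta a_x$ is paid at high frequencies by the $\dot B^{2-2\beta}$ dissipation of $\delta a^h$: the paraproduct is taken against $u_2\in\dot B^{\frac32-2\beta}_{2,1}$, the remainder against $u_2\in L^1(\dot B^{\frac32}_{2,1})$.
\item One term is not of the form $(\text{small})\times\widetilde D$ or $Z(t)\widetilde E$ with $Z\in L^1_{loc}$, namely $\|\delta u\|_{\dot B^{\frac12}_{2,1}}\|a^h_{1,x}\|_{\dot B^0_{2,1}}$. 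It is treated by H\"older in time and interpolation, and absorbed using the smallness of $a^h_1$.
\end{itemize}
Gronwall on a short interval with $\widetilde E(0)=0$ then gives $\delta a=\delta u=0$, and one continues in time.
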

\begin{proof}
Let $(\delta a,\delta u)=(a_1-a_2,u_1-u_2)$.
It is clear that $(\delta a,\delta u)$ satisfies the following equations:
\begin{align}\label{eq5}
\left\{\begin{aligned} 
&\partial_{t}\delta a+\delta u_x=\Psi,\\ 
&\partial_{t}\delta u+(-\Delta)^{\beta}\delta u+\delta a_x=\Phi,
\end{aligned}\right.
\end{align}
where 
\begin{align*}
\Psi=&-(u_{2}\delta a)_x-(a_1\delta u)_x,\\
\Phi=&(K(a_1)-K(a_2))a_{2,x}+K(a_1)\delta a_x-u_{2,x}\delta u-u_1\delta u_x\\
&+(G(a_1)-G(a_2))(-\Delta)^{\beta}u_{2}+G(a_1)(-\Delta)^{\beta}\delta u=\sum_{i=1}^{6}L_i.
\end{align*}
Let
\begin{align*}
\widetilde E(t)&=\|(\delta a,\delta u)^l\|_{\dot B^{1-2\beta}_{2,1}}+\|(\Lambda^{2\beta-1}\delta a,\delta u)\|^{h}_{\dot B^{1-2\beta}_{2,1}},\\
\widetilde D(t)&=\|\Lambda \delta a\|^{h}_{\dot B^{1-2\beta}_{2,1}}+\|\delta u\|^{h}_{\dot B^{1}_{2,1}}+\|(\delta a,\delta u)^l\|_{\dot B^{1}_{2,1}}.
\end{align*}
Repeating the same argument from (\ref{301}) to (\ref{307}), we have
\begin{align}\label{340}
\frac{d}{dt}\|(\delta a,\delta u)^l\|_{\dot B^{1-2\beta}_{2,1}}+\|(\delta a,\delta u)^l\|_{\dot B^{1}_{2,1}}\le C\|(\Psi,\Phi)^{l}\|_{\dot B^{1-2\beta}_{2,1}}.
\end{align}
For $\Psi$, thanks to Lemmas \ref{lemma2.4}-\ref{lemma2.4.2} and the fact $\frac{1}{2}<\beta<\frac{3}{4}$, the following product law holds
\begin{align}\label{3401}
\dot B^{\frac{3}{2}-2\beta}_{2,1}\times\dot B^{0}_{2,1}\to\dot B^{1-2\beta}_{2,1},   
\end{align}
Taking $s_2=1-2\beta$ in (\ref{201}) and using the fact $\frac{1}{2}<\beta<\frac{3}{4}$ again, we have
\begin{align}\label{3402}
\dot B^{1-2\beta}_{2,1}\times\dot B^{\frac{1}{2}}_{2,1}\to\dot B^{1-2\beta}_{2,1}.  
\end{align}
The product law (\ref{3401}) ensures that
\begin{align*}
\|(u_2\delta a)^l_x\|_{\dot B^{1-2\beta}_{2,1}}\le C\|\delta a\|_{\dot B^{0}_{2,1}}\|u_2\|_{\dot B^{\frac{3}{2}-2\beta}_{2,1}}\le C\widetilde E E,
\end{align*}
and
\begin{align*}
\|(a_1\delta u)^l_x\|_{\dot B^{1-2\beta}_{2,1}}\le C\left(\|\delta u^l\|_{\dot B^{0}_{2,1}}\|a_1\|_{\dot B^{\frac{3}{2}-2\beta}_{2,1}}+\|\delta u^h\|_{\dot B^{0}_{2,1}}\|a_1\|_{\dot B^{\frac{3}{2}-2\beta}_{2,1}}\right)\le C\widetilde E E+C\widetilde D E.
\end{align*}
For $\Phi$, we first estimate $L_1$. Applying (\ref{3401})-(\ref{3402}) and Lemma \ref{lemma2.6}, we have
\begin{align*}
\|L_1^l\|_{\dot B^{1-2\beta}_{2,1}}\le C\left(\|\delta a\|_{\dot B^{1-2\beta}_{2,1}}\|a^l_{2,x}\|_{\dot B^{\frac{1}{2}}_{2,1}}+\|\delta a\|_{\dot B^{0}_{2,1}}\|a^h_{2,x}\|_{\dot B^{\frac{3}{2}-2\beta}_{2,1}}\right)\le C\widetilde E D.
\end{align*}
Thanks to (\ref{3402}) and Lemma \ref{lemma2.6} again, we find that
\begin{align*}
\|L_2^l\|_{\dot B^{1-2\beta}_{2,1}}\le C\left(\|\delta a^l\|_{\dot B^{2-2\beta}_{2,1}}\|a_1\|_{\dot B^{\frac{1}{2}}_{2,1}}+\|\delta a^h\|_{\dot B^{2-2\beta}_{2,1}}\|a_1\|_{\dot B^{\frac{1}{2}}_{2,1}}\right)\le C\widetilde E E+C\widetilde D E,
\end{align*}
and 
\begin{align*}
\|L_3^l\|_{\dot B^{1-2\beta}_{2,1}}\le C\|\delta u\|_{\dot B^{1-2\beta}_{2,1}}\|u_{2}\|_{\dot B^{\frac{3}{2}}_{2,1}}\le C\widetilde E D.
\end{align*}
From (\ref{3401}) and Lemma \ref{lemma2.6}, we deduce that
\begin{align*}
\|L_4^l\|_{\dot B^{1-2\beta}_{2,1}}\le C\|\delta u\|_{\dot B^{1}_{2,1}}\|u_1\|_{\dot B^{\frac{3}{2}-2\beta}_{2,1}}\le C\widetilde D E,
\end{align*}
and
\begin{align*}
\|L_5^l\|_{\dot B^{1-2\beta}_{2,1}}\le C\|\delta a\|_{\dot B^{0}_{2,1}}\|u_2\|_{\dot B^{\frac{3}{2}}_{2,1}}\le C\widetilde E D.
\end{align*}
Finally, by virtue of (\ref{3402}) and Lemma \ref{lemma2.6} again, we obtain
\begin{align*}
\|L_6^l\|_{\dot B^{1-2\beta}_{2,1}}\le C\|\delta u\|_{\dot B^{1}_{2,1}}\|a_1\|_{\dot B^{\frac{1}{2}}_{2,1}}\le C\widetilde D E.
\end{align*}
Plugging the estimates for $L_1$ through $L_6$ into (\ref{340}), we end up with
\begin{align}\label{341}
\frac{d}{dt}\|(\delta a,\delta u)^l\|_{\dot B^{1-2\beta}_{2,1}}+\|(\delta a,\delta u)^l\|_{\dot B^{1}_{2,1}}\le\ C\widetilde E E+C\widetilde E D+C\widetilde D E.
\end{align}
Now we deal with the high-frequency estimates. For high frequencies, we define
\begin{align*}
\widetilde w=\delta u+\Lambda^{-2\beta}\delta a_x.
\end{align*}
$\widetilde w$ satisfies the following equation:
\begin{align*}
\frac{d}{dt}\widetilde w+(-\Delta)^{\beta}\widetilde w=\Lambda^{2-2\beta}\widetilde w+\Lambda^{-2\beta}\Psi_x+\Phi-\Lambda^{2-4\beta}\delta a_x.
\end{align*}
It is not difficult to derive that 
\begin{align}\label{342}
\frac{1}{2}\frac{d}{dt}\|\widetilde w_j\|_{L^2}+c2^{2j\beta}\|\widetilde w_j\|_{L^2}\le \|\Lambda^{-2\beta}\Psi_{j,x}+\Phi_j\|_{L^2}+\|\Lambda^{2-2\beta}\widetilde w_j-\Lambda^{2-4\beta}\delta a_{j,x}\|_{L^2}.
\end{align}
Noting that $\delta a$ fulfills
\begin{align}\label{343}
\frac{d}{dt}\delta a+(u_{2}\delta a)_x+(a_1\delta u)_x+\Lambda^{2-2\beta}\delta a=-\widetilde w_x.
\end{align}
Applying the operator $\Lambda^{2\beta-1}\dot\Delta_{j}$ to (\ref{343}), we obtain
\begin{align}\label{344}
\frac{d}{dt}\Lambda^{2\beta-1}\delta a_j+&u_2\Lambda^{2\beta-1}\delta a_{j,x}+\Lambda\delta a_{j}\\\nonumber
&=-\Lambda^{2\beta-1}\widetilde w_x-\Lambda^{2\beta-1}(\delta au_{2,x})_{j}-\Lambda^{2\beta-1}(a_1\delta u)_{x,j}+[u_2\partial_x,\dot\Delta_{j}\Lambda^{2\beta-1}]\delta a.\nonumber
\end{align}
Taking the $L^2$ inner product in (\ref{344}), we have
\begin{align*}
\frac{1}{2}\frac{d}{dt}\|\Lambda^{2\beta-1}\delta a_j\|_{L^2}^{2}&+\|\Lambda^{\beta} \delta a_j\|_{L^2}^{2}=\frac{1}{2}\int_{\mathbb{R}}u_{2,x}(\Lambda^{2\beta-1}\delta a_{j})^{2}dx-\int_{\mathbb{R}}\Lambda ^{2\beta-1}\widetilde w_{x,j}\Lambda^{2\beta-1}\delta a_{j}dx\\\nonumber
&-\int_{\mathbb{R}}\left(\Lambda^{2\beta-1}(\delta au_{2,x})_{j}+\Lambda^{2\beta-1}(a_1\delta u)_{x,j}+[u_2\partial_x,\dot\Delta_{j}\Lambda^{2\beta-1}]\delta a\right)\Lambda^{2\beta-1}\delta a_jdx.
\end{align*}
By virtue of Hölder's inequality, we have
\begin{align*}
&\frac{1}{2}\frac{d}{dt}\|\Lambda^{2\beta-1}\delta a_j\|_{L^2}^{2}+\|\Lambda^{\beta}\delta a_j\|_{L^2}^{2}\\\nonumber
\le& C\|u_{2,x}\|_{L^{\infty}}\|\Lambda^{2\beta-1}\delta a_{j}\|_{L^{2}}^{2}+\|\Lambda ^{2\beta-1}\widetilde w_{x,j}\|_{L^2}\|\Lambda^{2\beta-1}\delta a_{j}\|_{L^{2}}\\\nonumber
&+\left(\|\Lambda^{2\beta-1}(\delta au_{2,x})_{j}\|_{L^2}+\|\Lambda^{2\beta-1}(a_1\delta u)_{x,j}\|_{L^2}+\|[u_2\partial_x,\dot\Delta_{j}\Lambda^{2\beta-1}]\delta a\|_{L^{2}}\right)\|\Lambda^{2\beta-1}\delta a_j\|_{L^2}.
\end{align*}
Then we achieve
\begin{align}\label{345}
\frac{1}{2}\frac{d}{dt}\|\Lambda^{2\beta-1}\delta a_j\|_{L^2}&+c\|\Lambda \delta a_j\|_{L^2}\le C\|u_{2,x}\|_{L^{\infty}}\|\Lambda^{2\beta-1}\delta a_{j}\|_{L^{2}}+\|\Lambda ^{2\beta-1}\widetilde w_{x,j}\|_{L^2}\\\nonumber
&+\|\Lambda^{2\beta-1}(\delta au_{2,x})_{j}\|_{L^2}+\|\Lambda^{2\beta-1}(a_1\delta u)_{x,j}\|_{L^2}+\|[u_2\partial_x,\dot\Delta_{j}\Lambda^{2\beta-1}]\delta a\|_{L^{2}}.
\end{align}
Combining (\ref{344}) and (\ref{345}), we obtain
\begin{align*}
&\frac{1}{2}\frac{d}{dt}[\|\widetilde w_j\|_{L^2}+\epsilon\|\Lambda^{2\beta-1}\delta a_j\|_{L^2}]+c\epsilon\|\Lambda \delta a_j\|_{L^2}+c2^{2j\beta}\|\widetilde w_j\|_{L^2}\\\nonumber
\le& C\epsilon\|u_{2,x}\|_{L^{\infty}}\|\Lambda^{2\beta-1}\delta a_{j}\|_{L^{2}}+\epsilon\|\Lambda ^{2\beta-1}\widetilde w_{x,j}\|_{L^2}\\\nonumber
&+\epsilon\|\Lambda^{2\beta-1}(\delta au_{2,x})_{j}\|_{L^2}+\epsilon\|\Lambda^{2\beta-1}(a_1\delta u)_{x,j}\|_{L^2}+\epsilon\|[u_2\partial_x,\dot\Delta_{j}\Lambda^{2\beta-1}]\delta a\|_{L^{2}}\\\nonumber
&+\|\Lambda^{-2\beta}\Psi_{j,x}+\Phi_j\|_{L^2}+\|\Lambda^{2-2\beta}\widetilde w_j-\Lambda^{2-4\beta}\delta a_{j,x}\|_{L^2}..
\end{align*}
It follows from a similar derivation of (\ref{315}) that
\begin{align*}
&\frac{d}{dt}\left[\|\widetilde w_j\|_{L^2}+\|\Lambda^{2\beta-1}\delta a_j\|_{L^2}\right]+c_0\|\Lambda\delta a_j\|_{L^2}+c_02^{2j\beta}\|\widetilde w_j\|_{L^2}\\\nonumber
\le& C\epsilon\|u_{2,x}\|_{L^{\infty}}\|\Lambda^{2\beta-1}\delta a_{j}\|_{L^{2}}+\epsilon\|\Lambda^{2\beta-1}(\delta au_{2,x})_{j}\|_{L^2}+\epsilon\|\Lambda^{2\beta-1}(a_1\delta u)_{x,j}\|_{L^2} \\\nonumber
&+\epsilon\|[u_2\partial_x,\dot\Delta_{j}\Lambda^{2\beta-1}]\delta a\|_{L^{2}}+2^{-2j\beta+j}\|\Psi_{j}\|_{L^2}+\|\Phi_j\|_{L^2}.
\end{align*}
Recalling the definition of $\widetilde w_j$, we conclude that
\begin{align}\label{347}
&\frac{d}{dt}\left[\|\delta u_j\|_{L^2}+\|\Lambda^{2\beta-1}\delta a_j\|_{L^2}\right]+c_0\|\Lambda\delta a_j\|_{L^2}+c_02^{2j\beta}\|\delta u_j\|_{L^2}\\\nonumber
\le& C\epsilon\|u_{2,x}\|_{L^{\infty}}\|\Lambda^{2\beta-1}\delta a_{j}\|_{L^{2}}+\epsilon\|\Lambda^{2\beta-1}(\delta au_{2,x})_{j}\|_{L^2}+\epsilon\|\Lambda^{2\beta-1}(a_1\delta u)_{x,j}\|_{L^2}\\\nonumber
&+\epsilon\|[u_2\partial_x,\dot\Delta_{j}\Lambda^{2\beta-1}]\delta a\|_{L^{2}}+2^{-2j\beta+j}\|\Psi_{j}\|_{L^2}+\|\Phi_j\|_{L^2}.
\end{align}
Multiplying (\ref{347}) by $2^{(1-2\beta)j}$, taking the $l^1$ norm for $j\ge j_0-1$ and applying Lemma \ref{lemma2.7} once again, we have
\begin{align}\label{348}
&\frac{d}{dt}\|(\Lambda^{2\beta-1}\delta a,\delta u)\|^{h}_{\dot B^{1-2\beta}_{2,1}}+c_0\|\Lambda\delta a\|^{h}_{\dot B^{1-2\beta}_{2,1}}+c_0\|\delta u\|^{h}_{\dot B^{1}_{2,1}}\\\nonumber
\le& C\epsilon\|u_{2,x}\|_{\dot B^{\frac{1}{2}}_{2,1}}\|\delta a\|_{\dot B^{0}_{2,1}}+\epsilon\|\delta au_{2,x}\|^h_{\dot B^{0}_{2,1}}+\epsilon\|(a_1\delta u)_{x}\|^h_{\dot B^{0}_{2,1}}\\\nonumber
&+\|\Psi\|^{h}_{\dot B^{2-4\beta}_{2,1}}+\|\Phi\|^{h}_{\dot B^{1-2\beta}_{2,1}}.\nonumber
\end{align}
Next, we estimate the nonlinear terms. Taking $s_2=0$ in (\ref{201}), we have
\begin{align*}
\|\delta au_{2,x}\|^h_{\dot B^{0}_{2,1}}\le C\|\delta a\|_{\dot B^{0}_{2,1}}\|u_{2,x}\|_{\dot B^{\frac{1}{2}}_{2,1}}\le C\widetilde E D,
\end{align*}
and
\begin{align*}
\|(a_1\delta u)_{x}\|_{\dot B^{0}_{2,1}}^h\le &C\left(\|a_{1,x}\delta u\|_{\dot B^{0}_{2,1}}^h+\|a_1\delta u_{x}\|_{\dot B^{0}_{2,1}}^h\right)\\
\le &C\left(\|\delta u\|_{\dot B^{0}_{2,1}}\|a_{1,x}^l\|_{\dot B^{\frac{1}{2}}_{2,1}}+\|\delta u\|_{\dot B^{\frac{1}{2}}_{2,1}}\|a_{1,x}^h\|_{\dot B^{0}_{2,1}}+\|\delta u_{x}\|_{\dot B^{0}_{2,1}}\|a_1\|_{\dot B^{\frac{1}{2}}_{2,1}}\right)\\
\le &C\widetilde D E+C\widetilde E D+\|a_{1,x}^h\|_{\dot B^{0}_{2,1}}\|\delta u\|_{\dot B^{\frac{1}{2}}_{2,1}}.
\end{align*}
For $\Psi$, we only need to deal with the term $u_2\delta a_x$. By (\ref{3401}), Lemmas \ref{lemma2.4} and \ref{lemma2.4.2}, we have
\begin{align*}
\|u_2\delta a_x\|_{\dot B^{2-4\beta}_{2,1}}^h\le &C\left(\|u_{2}\delta a_{x}^l\|_{\dot B^{2-4\beta}_{2,1}}^h+\|u_2\delta a_x^h\|_{\dot B^{2-4\beta}_{2,1}}^h\right)\\
\le &C\left(\|u_{2}\delta a_{x}^l\|_{\dot B^{1-2\beta}_{2,1}}^h+\|T_{u_2}\delta a_x^h+T_{\delta a_x^h}u_2\|_{\dot B^{2-4\beta}_{2,1}}^h+\|R(u_2,\delta a_x^h)\|_{\dot B^{0}_{2,1}}^h\right)\\
\le &C\Big(\|\delta a^l\|_{\dot B^{1}_{2,1}}\|u_2\|_{\dot B^{\frac{3}{2}-2\beta}_{2,1}}+\|\delta a^h\|_{\dot B^{2-2\beta}_{2,1}}\|u_2\|_{\dot B^{\frac{3}{2}-2\beta}_{2,1}}+\|\delta a^h\|_{\dot B^{0}_{2,1}}\|u_2\|_{\dot B^{\frac{3}{2}}_{2,1}}\Big)\\
\le & C\widetilde DE+C\widetilde D E+C\widetilde E D,
\end{align*}
The other terms can be handled as before. Combining (\ref{341}) and the above estimates, we end up with
\begin{align}\label{349}
\frac{d}{dt}\widetilde E+c_0\widetilde D\le C\widetilde E E+C\widetilde E D+\|\delta u\|_{\dot B^{\frac{1}{2}}_{2,1}}\|a_{1,x}^h\|_{\dot B^{0}_{2,1}}.
\end{align}
By virtue of Gronwall's inequality, we derive that for any $t\in [0,T]$,
\begin{align*}
\widetilde E+c_0\int_{0}^{T}\widetilde Ddt\le E_{0}\exp\left(\int_{0}^{T}C(\widetilde D+\widetilde E)dt\right)+\exp\left(\int_{0}^{T}C(\widetilde D+\widetilde E)dt\right)\int_{0}^{T}\|\delta u\|_{\dot B^{\frac{1}{2}}_{2,1}}\|a_{1,x}^h\|_{\dot B^{0}_{2,1}}dt
\end{align*}
Thanks to $T<\infty$ and $\frac{1}{2}<\beta<\frac{3}{4}$, we obtain
\begin{align*}
\int_{0}^{T}\|\delta u\|_{\dot B^{\frac{1}{2}}_{2,1}}\|a_{1,x}^h\|_{\dot B^{0}_{2,1}}dt\le&\ C\|\delta u\|_{L_{T}^{\frac{4\beta}{4\beta-1}}\left(\dot B^{\frac{1}{2}}_{2,1}\right)}\|a_{1,x}^h\|_{L_{T}^{4\beta}\left(\dot B^{0}_{2,1}\right)}\\
\le&\ C\|\delta u\|_{L_{T}^{\frac{4\beta}{4\beta-1}}\left(\dot B^{\frac{1}{2}}_{2,1}\right)}\|a_{1}^h\|_{L_{T}^{\frac{1}{\frac{3}{2}-2\beta}}\left(\dot B^{1}_{2,1}\right)}\\
\le&\ C\left(\|\delta u\|_{L_{T}^{1}\left(\dot B^{1}_{2,1}\right)}+\|\delta u\|_{L_{T}^{\infty}\left(\dot B^{1-2\beta}_{2,1}\right)}\right)\left(\|a_{1}^h\|_{ L_{T}^{\infty}\left(\dot B^{\frac{3}{2}-2\beta}_{2,1}\right)}+\|a_{1}^h\|_{ L_{T}^{1}\left(\dot B^{\frac{5}{2}-2\beta}_{2,1}\right)}\right)\\
\le&\ cC\left(\|\delta u\|_{L_{T}^{1}\left(\dot B^{1}_{2,1}\right)}+\|\delta u\|_{L_{T}^{\infty}\left(\dot B^{1-2\beta}_{2,1}\right)}\right).
\end{align*}
For sufficiently small $T>0$, we have
\begin{align*}
\sup_{0\le t\le T}\widetilde E+c_0\int_{0}^{T}\widetilde Ddt\le E_{0}\exp\left(\int_{0}^{T}C(\widetilde D+\widetilde E)dt\right),
\end{align*}
which implies that
\begin{align*}
\delta a=\delta u=0, \quad \text{in}\  [0,T].
\end{align*}
This completes the proof of uniqueness of solutions to system (\ref{eq1}) when $\frac{1}{2}<\beta<\frac{3}{4}$. Thus, we finish the proof of Proposition \ref{prop3.3}.
\end{proof}
Next, we turn to the case $\beta=\frac{1}{2}$. We have the following proposition.
\begin{prop}\label{prop3.4}
Let $\beta=\frac{1}{2}$. Let $(a,u)\in\mathcal{E}_{T}$ be a local strong solution of system (\ref{eq1}) under the assumptions of Theorem \ref{theo1}.   
We have the following energy inequality:
\begin{align*}
\frac{d}{dt}\|(a,u)\|_{\dot B^{\frac{1}{2}}_{2,1}}+c_0\|(a,u)\|_{\dot B^{\frac{3}{2}}_{2,1}}
\le 0.
\end{align*}
where $c_0>0$ is a small enough constant. Furthermore, the system (\ref{eq1}) is globally well-posed in $\dot B^{\frac{1}{2}}_{2,1}$.
\end{prop}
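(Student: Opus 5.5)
When $\beta=\frac12$ the linear operator $L$ has eigenvalues $\frac{-1\pm i\sqrt3}{2}|\xi|$, so $a$ and $u$ are damped at the same rate $|\xi|$ at every frequency. The plan is therefore to run the low-frequency argument of Step~1 of Lemma~\ref{lemma3.1} on the whole frequency range, with no splitting and no effective velocity, but with the convection term in the $a$ equation handled by the commutator of Lemma~\ref{lemma2.7}.

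\textbf{A priori estimate.} Apply $\dot\Delta_j$ to (\ref{eq1}) and write $\partial_t a_j+u\partial_x a_j+u_{j,x}=-(au_x)_j+[u\partial_x,\dot\Delta_j]a$. With $\beta=\frac12$ we have $\Lambda^{2\beta-2}\partial_x=\Lambda^{-1}\partial_x$, which is of order zero. We form the same functional as in Step~1:
$$L_j^2=2\|(a_j,u_j)\|_{L^2}^2+2\langle\Lambda^{-1}a_{j,x},u_j\rangle+\|\Lambda^{-1}a_{j,x}\|_{L^2}^2.$$
Since $\|\Lambda^{-1}a_{j,x}\|_{L^2}=\|a_j\|_{L^2}$, the Cauchy--Schwarz inequality gives $L_j^2\approx\|(a_j,u_j)\|_{L^2}^2$ uniformly in $j\in\mathbb Z$. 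The linear computation (\ref{302})--(\ref{304}) then gives dissipation $\|\Lambda^{1/2}(a_j,u_j)\|_{L^2}^2\approx 2^j L_j^2$ for all $j$.

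Terms without derivative loss are handled directly. In the $L^2$ pairing of the transport term we integrate by parts, which leaves $\frac12\int u_x a_j^2$, bounded by $\|u_x\|_{L^\infty}L_j^2$. Wherever $u\partial_x a_j$ is paired against $u_j$ or against $\Lambda^{-1}a_{j,x}$, a full derivative falls on $a_j$ and cannot be removed by symmetry. We instead write $\langle u\partial_xa_j,\Lambda^{-1}\partial_x u_j\rangle$ and move $\Lambda^{-1}\partial_x$ across, using $[\Lambda^{-1}\partial_x,u]\partial_x$, which is of order zero and is controlled by $\|u_x\|_{L^\infty}$. The aim is the bound
$$\frac{d}{dt}L_j+c_02^jL_j\le C\|u_x\|_{L^\infty}\|(a_j,u_j)\|_{L^2}+C\|[u\partial_x,\dot\Delta_j]a\|_{L^2}+C\|((au_x)_j,H_j)\|_{L^2}.$$
We multiply by $2^{j/2}$ and sum over $j\in\mathbb Z$. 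Lemma~\ref{lemma2.7} with $\alpha=0$, $s=\frac12$ controls the commutator. Lemma~\ref{lemma2.5} and Lemma~\ref{lemma2.6} give
$$\|au_x\|_{\dot B^{1/2}_{2,1}}+\|uu_x\|_{\dot B^{1/2}_{2,1}}+\|K(a)a_x\|_{\dot B^{1/2}_{2,1}}+\|G(a)\Lambda u\|_{\dot B^{1/2}_{2,1}}\le C\|(a,u)\|_{\dot B^{1/2}_{2,1}}\|(a,u)\|_{\dot B^{3/2}_{2,1}}.$$
We then obtain $\frac{d}{dt}E+c_0D\le CED$ with $E=\|(a,u)\|_{\dot B^{1/2}_{2,1}}$ and $D=\|(a,u)\|_{\dot B^{3/2}_{2,1}}$. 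The bootstrap used in Lemma~\ref{lemma3.2} yields the stated inequality while $E\le c$. Global existence follows from the a priori bound by the usual Friedrichs approximation and compactness argument.

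\textbf{Uniqueness and continuous dependence.} Write the difference system (\ref{eq5}) for two solutions. We estimate $(\delta a,\delta u)$ in $\dot B^{-1/2}_{2,\infty}$, one derivative below the data, using the same functional $L_j$. This is possible because $\dot B^{-1/2}_{2,\infty}$ admits the product law (\ref{202}) with $s=\frac12$, the endpoint commutator in Lemma~\ref{lemma2.7}, and the endpoint of Lemma~\ref{lemma2.6}. For example,
$$\|(a_1\delta u)_x\|_{\dot B^{-1/2}_{2,\infty}}\lesssim\|a_1\|_{\dot B^{1/2}_{2,1}}\|\delta u\|_{\dot B^{1/2}_{2,\infty}}+\|a_{1,x}\|_{\dot B^{1/2}_{2,1}}\|\delta u\|_{\dot B^{-1/2}_{2,\infty}}.$$
The first term is absorbed by the $\widetilde L^1(\dot B^{1/2}_{2,\infty})$ dissipation, since $\|a_1\|_{\dot B^{1/2}_{2,1}}$ is small. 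Because the summation index is $\infty$, Gronwall must be applied with the usual logarithmic interpolation between $\dot B^{-1/2}_{2,\infty}$ and $\dot B^{-1/2}_{2,1}$, which is an Osgood-type argument. This gives uniqueness and Lipschitz dependence in the weak norm. Continuous dependence in $\dot B^{1/2}_{2,1}$ itself follows from a Bona--Smith type argument: regularize the data, use the uniform bounds for the smooth solutions, and interpolate with the weak-norm stability.

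\textbf{Main obstacle.} I expect the hardest step to be the cross term $\langle\Lambda^{-1}\partial_x(u\partial_x a)_j,u_j\rangle$. Its loss of one derivative must be recovered without the damping structure available when $\beta=1$, using only the order-zero commutator $[\Lambda^{-1}\partial_x,u]\partial_x$ and the fact that $u$ is dissipated at the same order as $a$. If this fails, the fallback is to introduce $w=u+\Lambda^{-1}a_x$ as in Step~2 and estimate $w$ and $a$ separately, where now no smallness of $2^{j_0(2-4\beta)}$ is available.
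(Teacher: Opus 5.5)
Your a priori estimate uses the same functional $L_j$ and the same dyadic summation as the paper. The extra transport/commutator machinery is unnecessary, however, because you have misdiagnosed where the difficulty lies. For $\beta=\frac12$ the density is dissipated at order one at \emph{every} frequency. So $\|a\|_{\dot B^{\frac32}_{2,1}}$ is part of $D$, and the derivative in the convection term is paid for by dissipation. The paper never splits off $u\partial_x a_j$; it bounds $\|(F,H)\|_{\dot B^{\frac12}_{2,1}}$ directly through the algebra property:
\[
\|(au)_x\|_{\dot B^{\frac12}_{2,1}}\le C\|a\|_{\dot B^{\frac12}_{2,1}}\|u\|_{\dot B^{\frac32}_{2,1}}+C\|u\|_{\dot B^{\frac12}_{2,1}}\|a\|_{\dot B^{\frac32}_{2,1}}\le CED .
\]
Your ``main obstacle'' is harmless for the same reason. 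Indeed $|\langle\Lambda^{-1}\partial_x(u\partial_xa_j),u_j\rangle|\le C\|u\|_{L^\infty}2^j\|a_j\|_{L^2}\|u_j\|_{L^2}$. After dividing by $L_j$, weighting by $2^{j/2}$ and summing, this is at most $C\|u\|_{L^\infty}\|a\|_{\dot B^{\frac32}_{2,1}}\le CED$.

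Your displayed block inequality is not actually justified by the order-zero commutator $[\Lambda^{-1}\partial_x,u]\partial_x$ alone. Moving $\Lambda^{-1}\partial_x$ across leaves the principal part $\langle u\,\Lambda^{-1}\partial_x^2a_j,u_j\rangle$. That part cancels only if you also extract $u\partial_xu_j$ from the momentum equation. The term you dropped is exactly the one just estimated, so the existence part survives.

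The same observation makes your weak-norm uniqueness argument unnecessary. The paper closes the difference estimate in $\dot B^{\frac12}_{2,1}$ itself, with $\widetilde E=\|(\delta a,\delta u)\|_{\dot B^{\frac12}_{2,1}}$ and $\widetilde D=\|(\delta a,\delta u)\|_{\dot B^{\frac32}_{2,1}}$. Every term of $\Psi,\Phi$ is bounded by $C\widetilde ED+C\widetilde DE$. For instance, $\|u_2\delta a_x\|_{\dot B^{\frac12}_{2,1}}\le C\|u_2\|_{\dot B^{\frac12}_{2,1}}\|\delta a\|_{\dot B^{\frac32}_{2,1}}$, and the composition differences follow from Lemma~\ref{lemma2.6} with $s=\frac12$, $r=1$. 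The $\widetilde DE$ part is absorbed by smallness. Gronwall then gives $\sup_{t\le T}\widetilde E(t)+c_0\int_0^T\widetilde D\,dt\le \exp(C\int_0^TD\,dt)\,\widetilde E(0)$. This yields uniqueness and Lipschitz dependence in $\dot B^{\frac12}_{2,1}$ at once.

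Your detour through $\dot B^{-\frac12}_{2,\infty}$, Osgood and Bona--Smith is plausible for uniqueness, but it buys nothing here. As sketched, it also does not yet give continuity in $\dot B^{\frac12}_{2,1}$. Interpolating weak-norm stability against uniform $\dot B^{\frac12}_{2,1}$ bounds only yields continuity in $\dot B^{s}_{2,1}$ for $s<\frac12$. The Bona--Smith step additionally requires a top-norm estimate for the difference, and that estimate is precisely the paper's direct one.
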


\begin{proof}
 Applying $\dot\Delta_{j}$ to system (\ref{eq1}), we have
\begin{align}\label{31801}
\left\{\begin{aligned} 
&\partial_{t}a_j+u_{j,x}=F_j,\\ 
&\partial_{t}u_j+(-\Delta)^{\frac{1}{2}}u_j+a_{j,x}=H_{j}.
\end{aligned}\right.
\end{align}
Taking the standard $L^2$ inner product in (\ref{31801}), we have
\begin{align*}
\frac{1}{2}\frac{d}{dt}\|(a_j,u_j)\|_{L^2}^{2}+\|\Lambda^{\frac{1}{2}}u_j\|_{L^{2}}^{2}=&\langle F_j,a_j\rangle+\langle H_j,u_j\rangle,\\
\frac{d}{dt}\langle\Lambda^{-1}a_{j,x},u_j\rangle+\|\Lambda^{\frac{1}{2}}a_j\|_{L^{2}}^{2}=&\langle \Lambda^{-1}F_{j,x},u_j\rangle+\langle\Lambda^{-1} H_j,a_{j,x}\rangle\\\nonumber
&+\|\Lambda^{\frac{1}{2}}u_j\|_{L^{2}}^{2}-\langle u_j,a_{j,x}\rangle,
\end{align*}
and
\begin{align*}
\frac{1}{2}\frac{d}{dt}\|\Lambda^{-1}a_{j,x}^{l}\|_{L^2}^{2}=\langle \Lambda^{-1}F_{j,x}^{l},\Lambda^{-1}a_{j,x}^{l}\rangle-\langle \Lambda^{-1}u_{j,xx}^{l},\Lambda^{-1}a_{j,x}^{l}\rangle.
\end{align*}
Combining above inequalities, we have
\begin{align}
\frac{1}{2}\frac{d}{dt}&\underbrace{\left[2\|(a_j,u_j)\|_{L^2}^{2}+2\langle\Lambda^{-1}a_{j,x},u_j\rangle+\|\Lambda^{-1}a_{j,x}\|_{L^2}^{2}\right]}_{L_{j}^{2}}+
\|\Lambda^{\frac{1}{2}}(a_j,u_j)\|_{L^{2}}^{2}\\\nonumber
=&2\langle F_j,a_j\rangle+2\langle H_j,u_j\rangle+\langle \Lambda^{-1}F_{j,x},u_j\rangle+\langle\Lambda^{-1} H_j,a_{j,x}\rangle+\langle \Lambda^{-1}F_{j,x},\Lambda^{-1}a_{j,x}\rangle.
\end{align}
By Lemma \ref{lemma2.2}, it is easy to check
\begin{align*}
L_{j}^{2}\approx \|(a_j,u_j)\|_{L^2}^{2},\quad \|\Lambda^{\frac{1}{2}}(a_j,u_j)\|_{L^{2}}^{2}\approx 2^{j}\|(a_j,u_j)\|_{L^2}^{2}.
\end{align*}
We end up with 
\begin{align*}
\frac{1}{2}\frac{d}{dt}L_{j}^{2}+c_{0}2^{j}L_{j}^{2}\le CL_j\|(F_{j}^{l},H_{j}^{l})\|_{L^2},
\end{align*}
which gives rise to
\begin{align}\label{31805}
\frac{d}{dt}L_{j}+c_{0}2^{j}L_{j}\le C\|(F_{j}^{l},H_{j}^{l})\|_{L^2}.
\end{align}
Multiplying (\ref{31805}) by $2^{\frac{1}{2}j}$ and taking the $l^{1}$ norm, we have
\begin{align}\label{31806}
\frac{d}{dt}\|(a,u)\|_{\dot B^{\frac{1}{2}}_{2,1}}+c_{0}\|(a,u)\|_{\dot B^{\frac{3}{2}}_{2,1}}\le C\|(F,H)\|_{\dot B^{\frac{1}{2}}_{2,1}}.
\end{align}
We next deal with the nonlinear terms. For $F$, since $\dot B_{2,1}^{\frac{1}{2}}$ is an algebra, we obtain
\begin{align*}
\|(au)_x\|_{\dot B_{2,1}^{\frac{1}{2}}}\le C\|a\|_{\dot B_{2,1}^{\frac{1}{2}}}\|u_x\|_{\dot B_{2,1}^{\frac{1}{2}}}+C\|u\|_{\dot B_{2,1}^{\frac{1}{2}}}\|a_x\|_{\dot B_{2,1}^{\frac{1}{2}}}.
\end{align*}
For $H$, we have
\begin{align*}
\|K(a)a_x\|_{\dot B_{2,1}^{\frac{1}{2}}}&\le C\|a\|_{\dot B_{2,1}^{\frac{1}{2}}}\|a_x\|_{\dot B_{2,1}^{\frac{1}{2}}},\\
\|uu_x\|_{\dot B_{2,1}^{\frac{1}{2}}}&\le C\|u\|_{\dot B_{2,1}^{\frac{1}{2}}}\|u_x\|_{\dot B_{2,1}^{\frac{1}{2}}},\\
\|G(a)\Lambda u\|_{\dot B_{2,1}^{\frac{1}{2}}}&\le C\|a\|_{\dot B_{2,1}^{\frac{1}{2}}}\|\Lambda u\|_{\dot B_{2,1}^{\frac{1}{2}}}.
\end{align*}
Inserting the above estimates into (\ref{31806}), we obtain
\begin{align}\label{31807}
\frac{d}{dt}\|(a,u)\|_{\dot B^{\frac{1}{2}}_{2,1}}+c_0\|(a,u)\|_{\dot B^{\frac{3}{2}}_{2,1}}
\le 0.
\end{align}
In this part, we consider the continuous dependence for system (\ref{eq1}) with $\beta=\frac{1}{2}$. As before, let $(\delta a,\delta u)=(a_1-a_2,u_1-u_2)$.
It is clear that $(\delta a,\delta u)$ satisfies the following equations:
\begin{align}\label{eq6}
\left\{\begin{aligned} 
&\partial_{t}\delta a+\delta u_x=\Psi,\\ 
&\partial_{t}\delta u+(-\Delta)^{\frac{1}{2}}\delta u+\delta a_x=\Phi,
\end{aligned}\right.
\end{align}
where 
\begin{align*}
\Psi=&-(u_{2}\delta a)_x-(a_1\delta u)_x,\\
\Phi=&(K(a_1)-K(a_2))a_{2,x}+K(a_1)\delta a_x-u_{2,x}\delta u-u_1\delta u_x\\
&+(G(a_1)-G(a_2))(-\Delta)^{\frac{1}{2}}u_{2}+G(a_1)(-\Delta)^{\frac{1}{2}}\delta u.
\end{align*}
Let
\begin{align*}
\widetilde E(t)=\|(\delta a,\delta u)\|_{\dot B^{\frac{1}{2}}_{2,1}},\quad
\widetilde D(t)=\|(\delta a,\delta u)\|_{\dot B^{\frac{3}{2}}_{2,1}}.
\end{align*}
Standard energy estimates impliy that
\begin{align}\label{350}
\frac{d}{dt}\|(\delta a,\delta u)\|_{\dot B^{\frac{1}{2}}_{2,1}}+\|(\delta a,\delta u)\|_{\dot B^{\frac{3}{2}}_{2,1}}\le C\|(\Psi,\Phi)\|_{\dot B^{\frac{1}{2}}_{2,1}}.
\end{align}
For $\Psi$, note $\dot B^{\frac{1}{2}}_{2,1} $ is an algebra. We then have
\begin{align*}
\|(u_{2}\delta a)_x\|_{\dot B^{\frac{1}{2}}_{2,1}}&\le C\|u_{2,x}\delta a\|_{\dot B^{\frac{1}{2}}_{2,1}}+C\|u_{2}\delta a_x\|_{\dot B^{\frac{1}{2}}_{2,1}}\\
&\le C\|\delta a\|_{\dot B^{\frac{1}{2}}_{2,1}}\|u_{2}\|_{\dot B^{\frac{3}{2}}_{2,1}}+C\|\delta a\|_{\dot B^{\frac{3}{2}}_{2,1}}\|u_{2}\|_{\dot B^{\frac{1}{2}}_{2,1}}\\
&\le C\widetilde E D+C\widetilde D E.
\end{align*}
Similarly, we derive that
\begin{align*}
\|(a_{1}\delta u)_x\|_{\dot B^{\frac{1}{2}}_{2,1}}&\le C\|a_{1}\delta u_x\|_{\dot B^{\frac{1}{2}}_{2,1}}+C\|a_{1,x}\delta u\|_{\dot B^{\frac{1}{2}}_{2,1}}\\
&\le C\|a_1\|_{\dot B^{\frac{1}{2}}_{2,1}}\|\delta u\|_{\dot B^{\frac{3}{2}}_{2,1}}+C\|a_1\|_{\dot B^{\frac{3}{2}}_{2,1}}\|\delta u\|_{\dot B^{\frac{1}{2}}_{2,1}}\\
&\le C\widetilde D E+C\widetilde E D.
\end{align*}
Combining above inequalities, we obtain
\begin{align}\label{351}
\|\Psi\|_{\dot B^{\frac{1}{2}}_{2,1}}\le C\widetilde D E+C\widetilde E D.
\end{align}
The remaining terms can be estimated using the fact that $\dot B^{\frac{1}{2}}_{2,1} $ is an algebra and Lemma \ref{lemma2.6}. We handle the nonlinear term $(K(a_2)-K(a_1))a_{2,x}$ as an example:
\begin{align*}
\|L_1\|_{\dot B^{\frac{1}{2}}_{2,1}}&\le C\|a_{2,x}\|_{\dot B^{\frac{1}{2}}_{2,1}}\|\delta a\|_{\dot B^{\frac{1}{2}}_{2,1}}\le C\widetilde E D,
\end{align*}
The other terms can be bounded in the same way. Finally, we conclude that
\begin{align}\label{352}
\|\Phi\|_{\dot B^{\frac{1}{2}}_{2,1}}&\le C\widetilde E D+C\widetilde D E.
\end{align}
Substituting (\ref{351}) and (\ref{352}) into (\ref{350}), it is clear that
\begin{align*}
\frac{d}{dt}\|(\delta a,\delta u)\|_{\dot B^{\frac{1}{2}}_{2,1}}+c_0\|(\delta a,\delta u)\|_{\dot B^{\frac{3}{2}}_{2,1}}\le C\widetilde E D+C\widetilde D E,
\end{align*}
which results in
\begin{align*}
\frac{d}{dt}\|(\delta a,\delta u)\|_{\dot B^{\frac{1}{2}}_{2,1}}+c_0\|(\delta a,\delta u)\|_{\dot B^{\frac{3}{2}}_{2,1}}\le C\widetilde E D,
\end{align*}
By virtue of Gronwall's inequality again, we thus have
\begin{align*}
\sup_{0\le t\le T}\widetilde E(t)+c_0\int_{0}^{T}\widetilde D(t)dt\le \exp \left(C\int_{0}^{T}D(t)dt\right) \widetilde E(0)\le  C\widetilde E(0).
\end{align*}
We complete the proof of continuous dependence for system (\ref{eq1}) when $\beta=\frac{1}{2}$.
\end{proof}
\begin{coro}\label{coro3.1}
With slight modifications to the proofs of Lemmas \ref{lemma3.1}-\ref{lemma3.2} and Proposition \ref{prop3.4}, we can prove that
\begin{align*}
&(a,u)^l\in\widetilde L^\infty\left(\dot B^{\frac{3}{2}-2\beta}_{2,1}\right)\cap \widetilde L^1\left(\dot B^{\frac{3}{2}}_{2,1}\right),\quad 
a^{h}\in \widetilde L^\infty\left(\dot B^{\frac{1}{2}}_{2,1}\right)\cap \widetilde L^1\left(\dot B^{\frac{5}{2}-2\beta}_{2,1}\right),\\
\quad 
&u^{h}\in\widetilde L^\infty\left(\dot B^{\frac{3}{2}-2\beta}_{2,1}\right)\cap \widetilde L^1\left(\dot B^{\frac{3}{2}}_{2,1}\right).
\end{align*}
\end{coro}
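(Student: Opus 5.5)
The plan is to redo the frequency-localized estimates of Lemma \ref{lemma3.1} and Proposition \ref{prop3.4} in a different order: integrate each dyadic inequality in time \emph{before} summing over $j$, rather than summing first and then integrating the resulting differential inequality. This is exactly what produces the $\widetilde L^\infty_T(L^2)$ and $\widetilde L^1_T(L^2)$ norms.

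\textbf{Low frequencies.} For $j\le j_0$ I start from (\ref{306}) (and from (\ref{31805}) when $\beta=\frac12$). Integrating on $[0,t]$ and taking the supremum over $t\in[0,T]$ gives
\begin{align*}
\|L_j\|_{L^\infty_T}+c_02^{2j\beta}\|L_j\|_{L^1_T}\le L_j(0)+C\|(F_j,H_j)\|_{L^1_T(L^2)}.
\end{align*}
Since $L_j\approx\|(a_j,u_j)\|_{L^2}$, multiplying by $2^{j(\frac32-2\beta)}$ and summing over $j\le j_0$ yields the $\widetilde L^\infty(\dot B^{\frac32-2\beta}_{2,1})\cap\widetilde L^1(\dot B^{\frac32}_{2,1})$ bound for $(a,u)^l$. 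The right-hand side is $\|(F,H)^l\|_{L^1_T(\dot B^{\frac32-2\beta}_{2,1})}$, using that $L^1_T(\dot B^s_{2,1})$ and $\widetilde L^1_T(\dot B^s_{2,1})$ coincide.

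\textbf{High frequencies.} For $j\ge j_0-1$ I do the same with (\ref{316}). The coefficient $\|u_x\|_{L^\infty}$ multiplying $\|\Lambda^{2\beta-1}a_j\|_{L^2}$ causes no difficulty. After integrating in time, this term is bounded by
\begin{align*}
\|\Lambda^{2\beta-1}a_j\|_{L^\infty_T(L^2)}\,\|u_x\|_{L^1_T(L^\infty)}.
\end{align*}
After summing, it is absorbed on the left because $\|u\|_{L^1_T(\dot B^{\frac32}_{2,1})}$ is small by Lemma \ref{lemma3.2}. The commutator from Lemma \ref{lemma2.7} and the product terms are handled the same way: they are bounded by time integrals of $\|u_x\|_{\dot B^{\frac12}_{2,1}}\|a\|_{\dot B^{\frac12}_{2,1}}$, which are at most $\|u\|_{L^1_T(\dot B^{\frac32}_{2,1})}\|a\|_{L^\infty_T(\dot B^{\frac12}_{2,1})}$. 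Summing with weight $2^{j(\frac32-2\beta)}$ then gives control of $\|\Lambda^{2\beta-1}a\|^h_{\widetilde L^\infty(\dot B^{\frac32-2\beta}_{2,1})}$, $\|\Lambda a\|^h_{\widetilde L^1(\dot B^{\frac32-2\beta}_{2,1})}$ and $\|u^h\|_{\widetilde L^\infty(\dot B^{\frac32-2\beta}_{2,1})\cap\widetilde L^1(\dot B^{\frac32}_{2,1})}$. These are exactly $a^h\in\widetilde L^\infty(\dot B^{\frac12}_{2,1})\cap\widetilde L^1(\dot B^{\frac52-2\beta}_{2,1})$, by Lemma \ref{lemma2.2}, together with the stated bound on $u^h$.

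\textbf{Nonlinear terms.} The estimates of Lemma \ref{lemma3.2} are all of the form (energy)$\times$(dissipation), and each transfers to the $L^1_T$ setting by Hölder in time. This gives
\begin{align*}
\mathcal X(T)\le C\mathcal X(0)+C\mathcal X(T)^2,
\end{align*}
where $\mathcal X$ is the sum of the tilde norms above. A continuity argument then closes the bound when the data are small. I would also check that $L^\infty_T(\dot B^s_{2,1})$ dominates nothing problematic here: the tilde norm is the larger one, and the nonlinear estimates only use the smaller $L^\infty_T$ norms, which are controlled by the tilde norms.

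\textbf{Main obstacle.} The delicate point is the paraproduct estimate for $a_x^hu$ in $\dot B^{\frac52-4\beta}_{2,1}$ and the $K(a)a_x$ term, where the time integrability must be split correctly: one factor in $L^\infty_T$, the other in $L^1_T$. For instance, $\|a^h_x\|_{L^1_T(\dot B^{\frac32-2\beta}_{2,1})}$ is paired with $\|u\|_{L^\infty_T(\dot B^{\frac32-2\beta}_{2,1})}$. Each such pairing is available, so the slight modifications claimed in Corollary \ref{coro3.1} suffice. The case $\beta=\frac12$ is identical, without any frequency splitting.
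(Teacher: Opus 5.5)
Your proposal is correct and is exactly the ``slight modification'' the paper intends: integrate the dyadic inequalities (\ref{306}), (\ref{315})--(\ref{316}) and (\ref{31805}) in time before summing over $j$, passing from $w$ to $u$ only after the time integration. You then reuse the energy$\times$dissipation bounds of Lemma \ref{lemma3.2} via H\"older in time, using $\widetilde L^1_T=L^1_T$ for $r=1$. As a small simplification, Lemma \ref{lemma3.2} already gives $\sup_t E(t)\le E(0)$ and $c_0\int_0^\infty D\,dt\le E(0)$, and Tonelli reduces every summed right-hand side to $\int_0^T(\cdots)\,dt$ of non-tilde quantities bounded by $\sup_t E\int_0^T D\,dt$, so neither the absorption step nor the continuity argument is actually needed.
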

\textbf{Proof of Theorem \ref{theo1}:}

We briefly sketch the construction of a local strong solution to system (\ref{eq1}), since it is similar to that of Theorem 10.2 in \cite{Bahouri2011}. We only consider the case $\frac{1}{2}<\beta<1$. The endpoint case $\beta=\frac{1}{2}$ is treated separately in Proposition \ref{prop3.4}. Let $L_n^2$ be the set of $L^2$ functions spectrally supported in the annulus $C_n=\left\{\frac{1}{n}\le|\xi|\le n\right\}$. The space $L_n^2$ is endowed with the standard $L^2$ topology. Let $\Omega_n$ be the set of functions $(a,u) \in (L_n^2)^{2}$ satisfying $\inf_{x \in \mathbb{R}} a(x) > -1$. Define $\mathscr{F}{\mathbb{E}}_n U(\xi) = \mathbf{1}_{C_n}(\xi) \mathscr{F} U(\xi)$. Let $u_L$ be the solution of the following linear heat equation
\begin{align*}
\partial_t u+(-\Delta)^{\beta} u=0,\quad u|_{t=0}=u_0.
\end{align*}
Let $u_L^n=\mathbb{E}_n u_L$. Let $(a^n,\bar u^n)$ be the solutions of the following system
\begin{align}\label{eq7}
\frac{d}{dt}
\begin{pmatrix}
        a^n \\ \bar{u}^n
    \end{pmatrix}
    =
    \begin{pmatrix}
        F_n(a^n,\bar{u}^n) \\ H_n(a^n,\bar{u}^n)
    \end{pmatrix}
    ,
    \qquad
\begin{pmatrix}
a^n \\
\bar{u}^n
\end{pmatrix}\bigg|_{t=0}
=
\begin{pmatrix}
{\mathbb{E}}_n a_0 \\
0
\end{pmatrix}.
\end{align}
where 
$$F_n(a^n,\bar{u}^n)=-\mathbb{E}_n [((1+a^n)u^n)_x],$$ 
and 
$$H_n(a^n, \bar{u}^n) = -{\mathbb{E}}_n(-\Delta)^{\beta} \bar{u}^n-{\mathbb{E}}_n(u^nu^n_x)+\mathbb{E}_n(G(a^n)(-\Delta)^{\beta}u^n) - {\mathbb{E}}_n \left(\frac{P'(1+a^n)}{1+a^n}a^n_x\right),$$ 
and
$$
u^n=\mathbb{E}_n u_L+\bar{u}^n.
$$
Then we have $u^n=\bar{u}^n+u_L^n$. The Cauchy–Lipschitz theorem implies that (\ref{eq7}) admits a unique maximal solution $(a^n,\bar{u}^n)$ in the space $C^1([0,T_n^*[;\Omega_n)$.

It is not difficult to check that the solution $(a^n,u^n)$ satisfies the global energy estimate (\ref{324}) and $T_n^*=\infty$. Hence, for any $T>0$, $\{u^n\}$ is uniformly bounded in
$\widetilde L^\infty_{T} \left(\dot B^{\frac{3}{2}-2\beta}_{2,1}\right)\cap \widetilde L^1_{T}\left(\dot B^{\frac{3}{2}}_{2,1}\right)$. Interpolation inequality implies that $\{u^n\}_{n\ge1}$ is uniformly bounded in $\widetilde L^{r}_{T}\left(\dot B^{\frac{3}{2}-2\beta+\frac{2\beta}{r}}_{2,1}\right)$ for any $1\le r\le+\infty$. We also deduce that  $\{a^{l,n}\}_{n\ge1}$ is uniformly bounded in $\widetilde L^\infty_{T}\left(\dot B^{\frac{3}{2}-2\beta}_{2,1}\right)\cap \widetilde L^{1}_{T}\left(\dot B^{\frac{3}{2}}_{2,1}\right)$ and $\{a^{h,n}\}_{n\ge1}$ is uniformly bounded in $\widetilde L^\infty_{T}\left(\dot B^{\frac{1}{2}}_{2,1}\right)\cap \widetilde L^{1}_{T}\left(\dot B^{\frac{5}{2}-2\beta}_{2,1}\right)$. Let $\bar{a}^n=a^n-{\mathbb{E}}_n a_0$. It is easy to see $\bar{a}^n$ satisfies
\begin{align*}
\partial_t \bar{a}^n=-{\mathbb{E}}_n[((1+a^n)u^n)_x].
\end{align*}
By virtue of Lemma \ref{lemma2.5}, we have
\begin{align*}
\|(u^n+a^nu^n)_x\|_{\dot B_{2,1}^{-\frac{1}{2}}}\le C\|a^{n}u^n\|_{\dot B_{2,1}^{\frac{1}{2}}}+C\|u^n\|_{\dot B_{2,1}^{\frac{1}{2}}}\le C\left(1+\|a^{n}\|_{\dot B_{2,1}^{\frac{1}{2}}}\right)\|u^n\|_{\dot B_{2,1}^{\frac{1}{2}}}.
\end{align*}
By using the uniform bound for $\{a^n\}_{n\ge1}$  and $\{u^n\}_{n\ge1}$, we readily derive from the above estimate that $\{\partial_t \bar{a}^n\}_{n\ge1}$ is uniformly bounded in $L_{T}^{\frac{2\beta}{2\beta-1}}\left(\dot B_{2,1}^{-\frac{1}{2}}\right)$. As regards $\bar{u}^n$, it satisfies the following
\begin{align*}
\partial_t \bar{u}^n = -{\mathbb{E}}_n \left( u^nu^n_x - G(a^n) (-\Delta)^{\beta} u^n +(-\Delta)^{\beta} \bar{u}^n + \left(\frac{P'(1+a^n)}{1+a^n}a_{x}^{n}\right) \right).
\end{align*}
By virtue of Lemmas \ref{lemma2.5} and \ref{lemma2.6} again, we have 
\begin{align*}
\|u^nu_{x}^{n}\|_{\dot B_{2,\infty}^{-\frac{1}{2}}}
&\le C\|u^{n}\|_{\dot B_{2,1}^{\frac{1}{2}}}^{2},
\end{align*}
and
\begin{align*}
\|G(a^n) (-\Delta)^{\beta} u^n\|_{\dot B_{2,\infty}^{-\frac{1}{2}}}
\le C\|u^{n}\|_{\dot B_{2,1}^{2\beta-\frac{1}{2}}}\|a^n\|_{\dot B_{2,1}^{\frac{1}{2}}}.
\end{align*}
Using Lemma \ref{lemma2.3}, we deduce
\begin{align*}
\|(-\Delta)^{\beta} \bar{u}^n\|_{\dot B_{2,\infty}^{-\frac{1}{2}}}
\le C\|\bar{u}^{n}\|_{\dot B_{2,1}^{2\beta-\frac{1}{2}}}.
\end{align*}
For the last term, thanks to Lemma \ref{lemma2.6}, we deduce 
\begin{align*}
\left\|\left(\frac{P'(1+a^n)}{1+a^n}a_{x}^{n}\right)\right\|_{\dot B_{2,\infty}^{-\frac{1}{2}}}=\|(A(a^n))_{x}\|_{\dot B_{2,\infty}^{-\frac{1}{2}}}
&\le C\|a^{n}\|_{\dot B_{2,1}^{\frac{1}{2}}},
\end{align*}
where $A$ is a smooth function vanishing at $0$. Combining the above estimates and the uniform boundedness of $\{a^n\}_{n\ge1}$ and $\{u^n\}_{n\ge1}$, we deduce that $\{\partial_t \bar{u}^n\}_{n\ge1}$ is uniformly bounded in $L_{T}^{\frac{\beta}{2\beta-1}}\left(\dot B_{2,\infty}^{-\frac{1}{2}}\right)$. Hence, we have the following conclusion:
\begin{align*}
&\{\bar{a}^n\}_{n\ge1} \quad\text{is uniformly bounded in}\quad L^\infty_{T}\left(\dot B^{\frac{1}{2}}_{2,1}\right)\cap L^\infty_{T}\left(\dot B^{\frac{3}{2}-2\beta}_{2,1}\right)\cap C_{T}^{\frac{1}{2\beta}}\left(\dot B_{2,1}^{-\frac{1}{2}}\right),\\\nonumber
&\{\bar{u}^n\}_{n\ge1} \quad\text{is uniformly bounded in}\quad L^\infty_{T}\left(\dot B^{\frac{3}{2}-2\beta}_{2,1}\right)\cap L^1_{T}\left(\dot B^{\frac{3}{2}}_{2,1}\right)\cap C_{T}^{\frac{1-\beta}{\beta}}\left(\dot B_{2,\infty}^{-\frac{1}{2}}\right).\nonumber
\end{align*}
Let $(\phi_j)_{j\in\mathbb{N}}$ be a sequence of smooth functions with values in $[0,1]$, supported in the ball $B(0,j+1)$ and equal to 1 on $B(0,j)$. The above conclusion implies that
\begin{align*}
(\phi_j\bar{a}^n,\phi_j\bar{u}^n) \quad\text{is equicontinuous in}\quad  C_{T}\left(\dot B_{2,1}^{-\frac{1}{2}}\times \dot B_{2,\infty}^{-\frac{1}{2}}\right).
\end{align*}
Hence, Ascoli's theorem and Cantor diagonal process ensure that there exist functions $(\bar{a},\bar{u})$ such that
\begin{align*}
(\phi_j\bar{a}^n,\phi_j\bar{u}^n) \longrightarrow(\phi_j\bar{a},\phi_j\bar{u}) \quad \text{in} \quad  C_{T}\left( B_{2,1}^{-\frac{1}{2}}\times  B_{2,\infty}^{-\frac{1}{2}}\right).
\end{align*}
Using Besov space interpolation, we have
\begin{align*}
&\phi_j\bar{a}^n \longrightarrow \phi_j\bar{a} \quad \text{in} \quad  C_{T}\left(B_{2,1}^{-\frac{1}{2}+\varepsilon}\right),\quad 0\le\varepsilon<1.\\
&\phi_j\bar{u}^n \longrightarrow   \phi_j\bar{u} \quad \text{in} \quad  C_{T}\left(   B_{2,1}^{-\frac{1}{2}+\varepsilon}\right),\quad 0<\varepsilon<2-2\beta.
\end{align*}
It is not difficult to check that $\bar u$ is in $\widetilde L^{1}_{T}\left(\dot B^{\frac{3}{2}}_{2,1}\right)$. From the definition of $(a^n,u^n)$, we see that
\begin{align*}
&\phi_j{a}^n \longrightarrow  \phi_j{a} \quad \text{in} \quad  C_{T}\left(B_{2,1}^{-\frac{1}{2}+\varepsilon}\right),\quad 0\le\varepsilon<1.\\
&\phi_j{u}^n \longrightarrow  \phi_j{u} \quad \text{in} \quad  C_{T}\left(   B_{2,1}^{-\frac{1}{2}+\varepsilon}\right),\quad 0<\varepsilon<2-2\beta,
\end{align*}
where $(a,u)=(a_0+\bar{a},u_L+\bar{u})$. It remains to check that $(a,u)$ solves (\ref{eq1}). We take $\mathbb{E}_n [((1+a^n)u^n)_x]$ and $\mathbb{E}_n(G(a^n)(-\Delta)^{\beta}u^n)$ as examples to analyze. Fix some $\varphi\in D(\mathbb{R})$, taking $p$ big enough, such that $\phi_p\equiv 1$ in supp$\varphi$. We first write
\begin{align*}
\Big\langle\mathbb{E}_n [((1+a^n)u^n)_x]&-((1+a)u)_{x},\varphi\Big\rangle=-\Big\langle ((1+a^n)u^n),(\mathbb{E}_n-\mathrm{Id})\varphi_{x}\Big\rangle\\
&-\Big\langle ((1+\phi_p a^n)\phi_p u^n)-(1+\phi_p a)\phi_p u,\varphi_x\Big\rangle.
\end{align*}
Combining the uniform bounds for $a^n$ and $u^n$, we conclude that  
\begin{align*}
&\int_{0}^{T}\Big\langle ((1+a^n)u^n),(\mathbb{E}_n-\mathrm{Id})\varphi_x\Big\rangle dt\\
\le&\ C\left(1+\|a^n\|_{L_{T}^\infty\left(\dot B^{\frac{1}{2}}_{2,1}\right)}\right)\|u^n\|_{L_{T}^\infty\left(\dot B^{\frac{3}{2}-2\beta}_{2,1}\right)}\|(\mathbb{E}_n-\mathrm{Id})\varphi_x\|_{\dot B^{2\beta-\frac{3}{2}}_{2,1}}\to0,\ (n\to\infty).
\end{align*}
For the other terms, we have
\begin{align*}
&\int_{0}^{T}\Big\langle (\phi_p a^n-\phi_p a)\phi_p u^n,\varphi_x\Big\rangle dt\\
\le&\ C\|\phi_p a^n-\phi_p a\|_{L_{T}^{2}\left( B^{\frac{3}{2}-2\beta}_{2,1}\right)}\|\phi_p u^n\|_{L_{T}^{2}\left( B^{\frac{1}{2}}_{2,1}\right)}\|\varphi_x\|_{ B^{\beta-\frac{1}{2}}_{2,1}}\to0,\ (n\to\infty),
\end{align*}
and
\begin{align*}
&\int_{0}^{T}\Big\langle (\phi_p u^n-\phi_p u)\phi_p a,\varphi_x\Big\rangle dt\\
\le&\ C\|\phi_p u^n-\phi_p u\|_{L_{T}^2\left( B^{\frac{1}{2}}_{2,1}\right)}\|\phi_p a\|_{L_{T}^2\left( B^{\frac{3}{2}-2\beta}_{2,1}\right)}\|\varphi_x\|_{ B^{2\beta-\frac{3}{2}}_{2,1}}\to0,\ (n\to\infty).
\end{align*}
Combining the above, we derive that
\begin{align*}
\int_{0}^{T}\Big\langle\mathbb{E}_n [((1+a^n)u^n)_x]&-((1+a)u)_{x},\varphi\Big\rangle dt\to0,\ (n\to\infty).
\end{align*}
Proceeding in the same way, we write
\begin{align*}
\Big\langle\mathbb{E}_n [G(a^n)(-\Delta)^{\beta}u^n]&-G(a)(-\Delta)^{\beta}u,\varphi\Big\rangle=\Big\langle G(a^n)(-\Delta)^{\beta}u^n,(\mathbb{E}_n-\mathrm{Id})\varphi\Big\rangle\\
&+\Big\langle G(\phi_p a^n)(-\Delta)^{\beta} u^n-G(\phi_p a)(-\Delta)^{\beta} u,\varphi\Big\rangle.
\end{align*}
Combining the uniform bounds for $(a^n,u^n)$ and Lemma \ref{lemma2.6}, we have
\begin{align*}
-&\int_{0}^{T}\Big\langle G(a^n)(-\Delta)^{\beta}u^n,(\mathbb{E}_n-\mathrm{Id})\varphi\Big\rangle dt\\
\le&\ C\left(1+\|a^n\|_{L_{T}^\infty\left(\dot B^{\frac{1}{2}}_{2,1}\right)}\right)\|u^n\|_{L_{T}^1\left(\dot B^{\frac{3}{2}}_{2,1}\right)}\|(\mathbb{E}_n-\mathrm{Id})\varphi\|_{\dot B^{2\beta-\frac{3}{2}}_{2,1}}\to0,\ (n\to\infty).
\end{align*}
Next, we estimate the second term, taking $s=\frac{1}{2}$ in (\ref{202}) and using Lemma \ref{lemma2.6} again, there holds
\begin{align*}
\int_{0}^{T}&\Big\langle (G(\phi_p a^n)-G(\phi_p a))(-\Delta)^{\beta} u^n,\varphi\Big\rangle dt\\
&\le C\|\phi_p a^n-\phi_p a\|_{L_{T}^\infty\left(B^{-\frac{1}{2}}_{2,\infty}\right)}\| u^n\|_{L_{T}^{2}\left( B^{\frac{1}{2}}_{2,1}\right)}\|\varphi\|_{B^{\frac{1}{2}}_{2,1}}\to0,\ (n\to\infty).
\end{align*}
Note that
\begin{align*}
\|\varphi G(\phi_{p}a)\|_{L_{T}^{\frac{\beta}{1-\beta}}\left(B^{\frac{1}{2}}_{2,1}\right)}\le C\|\phi_p a\|_{L_{T}^\infty\left( B^{\frac{1}{2}}_{2,1}\right)}\|\varphi\|_{ B^{\frac{1}{2}}_{2,1}}.
\end{align*}
It turns out from the above fact and Lemmas \ref{lemma2.4}-\ref{lemma2.4.2} and \ref{lemma2.6} that
\begin{align*}
\int_{0}^{T}&\Big\langle (-\Delta)^{\beta}(u^n-u)G(\phi_p a),\varphi\Big\rangle dt\to 0,\ (n\to\infty),
\end{align*}
where we use the fact that $u^{n}$ converge weakly to $u$ in $L_{T}^{\frac{\beta}{2\beta-1}}\left(\dot B_{2,1}^{2\beta-\frac{1}{2}}\right)$. Therefore, we arrive at
\begin{align*}
\int_{0}^{T}\Big\langle\mathbb{E}_n [G(a^n)(-\Delta)^{\beta}u^n]&-G(a)(-\Delta)^{\beta}u,\varphi\Big\rangle dt\to0,\ (n\to\infty).
\end{align*}
The other terms can be handled in a similar way. Hence, we prove that $(a,u)$ is a solution of system (\ref{eq1}) in the sense of distributions. Thanks to Fatou's lemma and (\ref{324}), we derive that
  \begin{align*}
a\in\widetilde L_{T}^\infty\left(\dot B^{\frac{3}{2}-2\beta}_{2,1}\right)\cap \widetilde L_{T}^\infty\left(\dot B^{\frac{1}{2}}_{2,1}\right),\quad u\in\widetilde L_{T}^\infty\left(\dot B^{\frac{3}{2}-2\beta}_{2,1}\right)\cap \widetilde L_{T}^1\left(\dot B^{\frac{3}{2}}_{2,1}\right).
\end{align*}
Finally, we observe
\begin{align*}
\partial_{t}a+ua_{x}\in L^{1}_{T}\left(\dot B_{2,1}^{\frac{3}{2}-2\beta}\right)\cap L^{1}_{T}\left(\dot B_{2,1}^{\frac{1}{2}}\right),\quad \partial_{t}u\in L_{T}^{1}\left(\dot B_{2,1}^{\frac{3}{2}-2\beta}\right),
\end{align*}
which ensures that $a\in C_{T}\left(\dot B_{2,1}^{\frac{3}{2}-2\beta}\right)\cap C_{T}\left(\dot B_{2,1}^{\frac{1}{2}}\right)$ and $u\in C_{T}\left(\dot B_{2,1}^{\frac{3}{2}-2\beta}\right)$.
Thanks to Lemmas \ref{lemma3.1}-\ref{lemma3.2}, Propositions \ref{prop3.3}-\ref{prop3.4} and Corollary \ref{coro3.1}, this completes the proof of Theorem \ref{theo1}.
\hfill\qedsymbol

\section{Time decay rates to the CNS equations}\label{section4}

In this section, we consider the time decay rates for (\ref{eq1}). We first prove the propagation of low-frequency regularity for the solutions, which is crucial for the proof of Theorem \ref{theo2}.
\begin{lemm}\label{lemma4.1}
Let $\frac{1}{2}\le\beta<1$. Let $(a,u)$ be the global solution under the assumptions of Theorem \ref{theo1}. If additionally $(a_0,u_0)^{l}\in \dot B_{2,\infty}^{-\sigma}$ with $2\beta-\frac{3}{2}<\sigma\le\frac{1}{2}$ and
\begin{align*}
\|(a_0,u_0)\|^{l}_{\dot B_{2,\infty}^{-\sigma}}\le C,
\end{align*}
 then we have
\begin{align*}
\|(a,u)\|^{l}_{\dot B_{2,\infty}^{-\sigma}}\le C.
\end{align*}
\end{lemm}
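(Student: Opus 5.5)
We will propagate the negative low-frequency regularity by redoing the low-frequency energy estimate of Step 1 of Lemma \ref{lemma3.1}, this time in $\dot B^{-\sigma}_{2,\infty}$. The starting point is the localized inequality (\ref{306}), $\frac{d}{dt}L_j+c_02^{2j\beta}L_j\le C\|(F_j^l,H_j^l)\|_{L^2}$, valid for $j\le j_0$ with $L_j\approx\|(a_j^l,u_j^l)\|_{L^2}$. For $\beta=\frac12$ we use (\ref{31805}) restricted to $j\le j_0$. We drop the damping term, integrate in time, multiply by $2^{-j\sigma}$ and take the supremum over $j\le j_0$. This gives
\begin{align*}
\|(a,u)(t)\|^{l}_{\dot B^{-\sigma}_{2,\infty}}\le C\|(a_0,u_0)\|^{l}_{\dot B^{-\sigma}_{2,\infty}}+C\int_0^t\|(F,H)\|^{l}_{\dot B^{-\sigma}_{2,\infty}}\,d\tau .
\end{align*}
It therefore suffices to show $\|(F,H)\|^{l}_{\dot B^{-\sigma}_{2,\infty}}\le C\,D(\tau)\big(1+\|(a,u)\|^{l}_{\dot B^{-\sigma}_{2,\infty}}\big)$, or an estimate of the same form with $D$ replaced by an integrable quantity. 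Here $D$ is the dissipation from Lemma \ref{lemma3.2}, so $\int_0^\infty D\,d\tau\le Cc$. Gronwall's inequality then closes the bound.

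For the nonlinear terms the main tool is (\ref{202}): $\|fg\|_{\dot B^{-\sigma}_{2,\infty}}\le C\|f\|_{\dot B^{\frac12}_{2,1}}\|g\|_{\dot B^{-\sigma}_{2,\infty}}$, which holds since $-\frac12<\sigma\le\frac12$. Combined with Lemma \ref{lemma2.6} for $K(a)$ and $G(a)$, this lets us place one factor in a dissipated norm. For $uu_x$ and $K(a)a_x$ we write them as derivatives, for instance $\frac12(u^2)_x$. At low frequencies the derivative costs $2^{j}\le 2^{j_0}$, so we need $\|u^2\|_{\dot B^{1-\sigma}_{2,\infty}}$. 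For $\sigma\le\frac12$ this is bounded by $\|u\|_{\dot B^{\frac12}_{2,1}}\|u\|_{\dot B^{\frac32-\sigma}_{2,1}}$, or alternatively handled through the Bony decomposition, Lemmas \ref{lemma2.4}--\ref{lemma2.4.2}. The same treatment applies to $F=-(au)_x$. A cleaner route is to split each product into low and high parts. Products involving a low-frequency factor $u^l$ or $a^l$ are estimated by (\ref{202}) as $\|\cdot\|_{\dot B^{\frac32}_{2,1}}\times\|(a,u)^l\|_{\dot B^{-\sigma}_{2,\infty}}$, after moving the derivative onto the smooth factor. Terms made only of high-frequency pieces are estimated directly in $\dot B^{-\sigma}_{2,\infty}$, using the embedding $\dot B^{s}_{2,1}\hookrightarrow\dot B^{-\sigma}_{2,\infty}$ at high frequencies for $s\ge-\sigma$. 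Here the critical condition $\sigma>2\beta-\frac32$ enters, because it guarantees $\frac32-2\beta>-\sigma$, so that the high-frequency energy norms dominate $\dot B^{-\sigma}$. For example, $\|a^hu_x\|\le C\|a^h\|_{\dot B^{\frac12}_{2,1}}\|u_x^h\|_{\dot B^{-\sigma}_{2,\infty}}\le C\|a\|_{\dot B^{\frac12}_{2,1}}\|u\|^h_{\dot B^{\frac32}_{2,1}}$. The term $G(a)(-\Delta)^\beta u$ is handled the same way: at low frequency $\|(-\Delta)^\beta u^l\|_{\dot B^{-\sigma}_{2,\infty}}\le C\|u^l\|_{\dot B^{-\sigma}_{2,\infty}}$, up to a power of $2^{j_0}$, and at high frequency we bound it by $\|u\|^h_{\dot B^{\frac32}_{2,1}}$.

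The main obstacle will be the quadratic terms in which no factor is low frequency and only the undissipated high-frequency norm $\|a^h\|_{\dot B^{\frac12}_{2,1}}$ of $a$ appears, notably $(a^hu)_x$ and $K(a)a^h_x$. For these we will pair $a_x^h$ with the dissipation $\|\Lambda a\|^h_{\dot B^{\frac32-2\beta}_{2,1}}$, which belongs to $D$. We will use $\|fg\|_{\dot B^{-\sigma}_{2,\infty}}\le C\|f\|_{\dot B^{\frac12}_{2,1}}\|g\|_{\dot B^{-\sigma}_{2,\infty}}$ with $g=a^h_x\in \dot B^{\frac52-2\beta}$, which is acceptable at high frequency since $\frac52-2\beta>-\sigma$. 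For a term such as $(a^hu^h)_x$ we move the derivative off, using that the output is localized at low frequency. After these bounds the inequality becomes linear in $X(t)=\sup_{\tau\le t}\|(a,u)(\tau)\|^{l}_{\dot B^{-\sigma}_{2,\infty}}$ with integrable coefficients, and Gronwall's inequality gives $X(t)\le C$ uniformly in $t$.
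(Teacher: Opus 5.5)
Your overall scheme is sound, but one estimate fails as written.

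\textbf{The gap: the term $G(a)(-\Delta)^{\beta}u^l$.} You bound $\|(-\Delta)^{\beta}u^l\|_{\dot B^{-\sigma}_{2,\infty}}\lesssim 2^{2\beta j_0}\|u^l\|_{\dot B^{-\sigma}_{2,\infty}}$. Then (\ref{202}) forces $G(a)$ into $\dot B^{\frac12}_{2,1}$, so the Gronwall coefficient becomes $\|G(a)\|_{\dot B^{\frac12}_{2,1}}\approx\|a\|_{\dot B^{\frac12}_{2,1}}$. The high-frequency part of this is dominated by $D$, but $\|a^l\|_{\dot B^{\frac12}_{2,1}}$ is not:
\begin{itemize}
\item at low frequencies $\|a^l\|_{\dot B^{\frac32}_{2,1}}$ is the weaker norm;
\item the a priori bounds only give $a^l\in L^r_t(\dot B^{\frac12}_{2,1})$ for $r\ge\frac{2\beta}{2\beta-1}$ (only $r=\infty$ when $\beta=\frac12$);
\item even the linear flow decays only like $t^{-\frac{1/2+\sigma}{2\beta}}$, with $\frac{1/2+\sigma}{2\beta}\le 1$.
\end{itemize}
So Gronwall yields $\exp\big(C\int_0^t\|a\|_{\dot B^{\frac12}_{2,1}}d\tau\big)$, which is not uniform in $t$.

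The repair is to swap the roles of the two factors:
\begin{align*}
\|G(a)(-\Delta)^{\beta}u^l\|_{\dot B^{-\sigma}_{2,\infty}}&\le C\|\Lambda^{2\beta}u^l\|_{\dot B^{\frac12}_{2,1}}\|G(a)\|_{\dot B^{-\sigma}_{2,\infty}}\\
&\le C2^{j_0(2\beta-1)}\|u^l\|_{\dot B^{\frac32}_{2,1}}\big(1+\|a\|_{\dot B^{\frac12}_{2,1}}\big)\|a\|_{\dot B^{-\sigma}_{2,\infty}}\le CD\,(X+E).
\end{align*}
This uses Lemma \ref{lemma2.6} in $\dot B^{-\sigma}_{2,\infty}$ (the endpoint $\sigma=\frac12$ is allowed since $r=\infty$) and $\|a\|^h_{\dot B^{-\sigma}_{2,\infty}}\lesssim\|a\|^h_{\dot B^{\frac12}_{2,1}}$. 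The paper's display for this term makes the same pairing as you, and its final $\le CDE_0$ is not justified for the $a^l$ part; the same swap fixes it.

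Your first, abandoned route via $\frac12(u^2)_x$ has the same defect: it leaves an undissipated coefficient $\|u\|_{\dot B^{\frac12}_{2,1}}$. Moreover, the bound $\|u^2\|_{\dot B^{1-\sigma}_{2,\infty}}\lesssim\|u\|_{\dot B^{\frac12}_{2,1}}\|u\|_{\dot B^{\frac32-\sigma}_{2,1}}$ is off by half a derivative (it violates scaling). Drop that route.

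\textbf{Comparison with the paper on $a^h_xu$.} With the repair your argument closes, and it differs from the paper's at this critical term.

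The paper proceeds as follows. It estimates $a^h_xu$ in $\dot B^{-\frac12}_{2,\infty}$, which is allowed at low frequencies since $-\sigma\ge-\frac12$. It then bounds it by $\|a^h\|_{\dot B^{\frac12}_{2,1}}\|u\|_{\dot B^{\frac12}_{2,1}}$ and treats it as a forcing term, integrable by H\"older in time in $L^{2\beta}_t\times L^{\frac{2\beta}{2\beta-1}}_t$. That forcing is finite but not small.

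You instead split $u=u^l+u^h$:
\begin{itemize}
\item For $a^h_xu^l$ you combine $\|u^l\|_{\dot B^{\frac12}_{2,1}}\lesssim E$ with $\|a^h_x\|_{\dot B^{-\sigma}_{2,\infty}}\lesssim\|\Lambda a\|^h_{\dot B^{\frac32-2\beta}_{2,1}}\le D$. This is exactly where $\sigma>2\beta-\frac32$ is needed. Your stated condition $\frac52-2\beta>-\sigma$ is an index slip, since $a^h_x$ is only in $L^1_t(\dot B^{\frac32-2\beta}_{2,1})$.
\item For $(a^hu^h)_x$ you remove the derivative at cost $2^{j_0}$ and get $\|a^h\|_{\dot B^{\frac12}_{2,1}}\|u^h\|_{\dot B^{-\sigma}_{2,\infty}}\lesssim E\,\|u\|^h_{\dot B^{\frac32}_{2,1}}$.
\end{itemize}
Keep this split explicit. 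The pairing of (\ref{202}) with $g=a^h_x$ is legitimate for $f=u^l$ or $f=K(a)$, but not for $f=u^h$ when $\beta>\frac12$, because $\|u^h\|_{\dot B^{\frac12}_{2,1}}$ is not bounded in time.

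What your version buys is that every nonlinear term is bounded by $CD\,(X+E)$. The conclusion then follows from a plain linear Gronwall with $\int_0^\infty D\lesssim c$, rather than the paper's quadratic inequality with an $O(1)$ forcing.
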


\begin{proof}
Standard $L^2$ energy estimates give
\begin{align*}
\frac{1}{2}\frac{d}{dt}\|(a_j,u_j)\|_{L^2}^{2}+\|\Lambda^{\beta}u_j\|_{L^{2}}^{2}=\langle F_j,a_j\rangle+\langle H_j,u_j\rangle,
\end{align*}
It follows from a similar derivation of (\ref{307}) that
\begin{align*}
\left(\|(a,u)\|_{\dot B^{-\sigma}_{2,\infty}}^{l}\right)^{2}\le C\left(\|(a_0,u_0)\|_{\dot B^{-\sigma}_{2,\infty}}^{l}\right)^{2}+\int_{0}^{t}\|(F,H)\|_{\dot B^{-\sigma}_{2,\infty}}^{l}\|(a,u)\|_{\dot B^{-\sigma}_{2,\infty}}^{l}dt^{\prime}.
\end{align*}
Let
$$
E_{0}(t)=\|(a,u)\|_{\dot B^{-\sigma}_{2,\infty}}^{l}.
$$
By Lemma \ref{lemma2.5}, we conclude that for any $2\beta-\frac{3}{2}<\sigma\le\frac{1}{2}$, there holds
\begin{align}\label{401}
\dot B^{-\sigma}_{2,\infty}\times \dot B^{\frac{1}{2}}_{2,1}\longrightarrow \dot B^{-\sigma}_{2,\infty}.
\end{align}
We next estimate the nonlinear terms. For $F$, due to the above product law, we conclude that
\begin{align*}
\|a_xu\|^l_{\dot B^{-\sigma}_{2,\infty}}
\le &C\left(\|a_x^lu^l\|_{\dot B^{-\sigma}_{2,\infty}}+\|a^l_xu^h\|_{\dot B^{-\sigma}_{2,\infty}}+\|a_x^hu\|_{\dot B^{-\frac{1}{2}}_{2,\infty}}\right)\\
\le &C\left(\|a_x^l\|_{\dot B^{\frac{1}{2}}_{2,1}}\|u^l\|_{\dot B^{-\sigma}_{2,\infty}}+\|a_x^l\|_{\dot B^{\frac{1}{2}}_{2,1}}\|u^h\|_{\dot B^{-\sigma}_{2,\infty}}+\|a_x^h\|_{\dot B^{-\frac{1}{2}}_{2,\infty}}\|u\|_{\dot B^{\frac{1}{2}}_{2,1}}\right)\\
\le &CE_{0}D+CED+\|a_x^h\|_{\dot B^{-\frac{1}{2}}_{2,\infty}}\|u\|_{\dot B^{\frac{1}{2}}_{2,1}}.
\end{align*}
For the other terms, we also have
\begin{align*}
\|au_x\|^l_{\dot B^{-\sigma}_{2,\infty}}
\le C\|u\|_{\dot B^{\frac{3}{2}}_{2,1}}\left(\|a^l\|_{\dot B^{-\sigma}_{2,\infty}}+\|a^h\|_{\dot B^{-\sigma}_{2,\infty}}\right)\le CDE_{0}+CDE.
\end{align*}
For $H$, 
Applying (\ref{401}) and Lemma \ref{lemma2.6} again leads to
\begin{align*}
\|a_xK(a)\|^l_{\dot B^{-\sigma}_{2,\infty}}
\le &C\left(\|a_x^lK(a)\|_{\dot B^{-\sigma}_{2,\infty}}+\|a^h_xK(a)\|_{\dot B^{-\sigma}_{2,\infty}}\right)\\
\le &C\left(\|a_x^l\|_{\dot B^{\frac{1}{2}}_{2,1}}\|a\|^{l}_{\dot B^{-\sigma}_{2,\infty}}+\|a_x^l\|_{\dot B^{\frac{1}{2}}_{2,1}}\|a\|^{h}_{\dot B^{-\sigma}_{2,\infty}}+\|a^h\|_{\dot B^{1-\sigma}_{2,\infty}}\|a\|_{\dot B^{\frac{1}{2}}_{2,1}}\right)\\
\le &CDE_{0}+CDE,
\end{align*}
and
\begin{align*}
\|uu_x\|^l_{\dot B^{-\sigma}_{2,\infty}}
\le &C\left(\|u^lu_{x}\|_{\dot B^{-\sigma}_{2,\infty}}+\|u^hu_{x}\|_{\dot B^{-\sigma}_{2,\infty}}\right)\\
\le &C\left(\|u_x\|_{\dot B^{\frac{1}{2}}_{2,1}}\|u^l\|_{\dot B^{-\sigma}_{2,\infty}}+\|u_x\|_{\dot B^{\frac{1}{2}}_{2,1}}\|u^h\|_{\dot B^{-\sigma}_{2,\infty}}\right)\\
\le &CDE_{0}+CDE,
\end{align*}
and
\begin{align*}
\|G(a)(-\Delta)^{\beta}u\|_{\dot B^{-\sigma}_{2,\infty}}^l&\le C\|a\|_{\dot B^{\frac{1}{2}}_{2,1}}\|u\|_{\dot B^{2\beta-\sigma}_{2,\infty}}\\
&\le C\left(\|a\|_{\dot B^{\frac{1}{2}}_{2,1}}\|u^l\|_{\dot B^{-\sigma}_{2,\infty}}+\|a\|_{\dot B^{\frac{1}{2}}_{2,1}}\|u^{h}\|_{\dot B^{\frac{3}{2}}_{2,1}}\right)\\
&\le CDE_{0}+CDE.
\end{align*}
Combining the above, we have
\begin{align*}
E_0^{2}(t)\le CE_{0}^{2}(0)+C\int_{0}^{t}\left(E_{0}D+ED+\|a_x^h\|_{\dot B^{-\frac{1}{2}}_{2,\infty}}\|u\|_{\dot B^{\frac{1}{2}}_{2,1}}\right)E_{0}dt^{\prime}.
\end{align*}
Thanks to Lemma \ref{lemma2.3} and (\ref{324}), we obtain
\begin{align*}
\int_{0}^{t}\left(\|a_x^h\|_{\dot B^{-\frac{1}{2}}_{2,\infty}}\|u\|_{\dot B^{\frac{1}{2}}_{2,1}}\right)dt^{\prime}\le &C\|a^h\|_{L^{2\beta}\left(\dot B^{\frac{1}{2}}_{2,1}\right)}\|u\|_{L^{\frac{2\beta}{2\beta-1}}\left(\dot B^{\frac{1}{2}}_{2,1}\right)}\\
\le &C\|a^h\|_{\widetilde L^{2\beta}\left(\dot B^{\frac{1}{2}}_{2,1}\right)}\|u\|_{\widetilde L^{\frac{2\beta}{2\beta-1}}\left(\dot B^{\frac{1}{2}}_{2,1}\right)}<\infty,
\end{align*}
where we used the fact that $a^h$ is in $\widetilde L^{\infty}\left(\dot B^{\frac{1}{2}}_{2,1}\right)\cap\widetilde L^{1}\left(\dot B^{\frac{1}{2}}_{2,1}\right)$. Combining the above two lines gives rise to
\begin{align}
E_0(t)\le C.
\end{align}
This proves Lemma \ref{lemma4.1}.
\end{proof}

\textbf{Proof of Theorem \ref{theo2}:}

By interpolation between Besov spaces, we obtain
\begin{align}\label{420}
\|(a,u)^l\|_{\dot B^{\frac{3}{2}-2\beta}_{2,1}}\le C \left(\|(a,u)\|_{\dot B^{-\sigma}_{2,\infty}}^{l}\right)^{\frac{4\beta}{3+2\sigma}}\left(\|(a,u)^l\|_{\dot B^{\frac{3}{2}}_{2,1}}\right)^{\frac{3+2\sigma-4\beta}{3+2\sigma}}.
\end{align}
By Lemma \ref{lemma4.1}, we derive that
\begin{align}\label{421}
\|(a,u)^l\|_{\dot B^{\frac{3}{2}}_{2,1}}\ge C\left(\|(a,u)^l\|_{\dot B^{\frac{3}{2}-2\beta}_{2,1}}\right)^{\frac{3+2\sigma}{3+2\sigma-4\beta}}.
\end{align}
On the other hand, due to Theorem \ref{theo1}, we have
\begin{align}\label{422}
\|\Lambda a\|^{h}_{\dot B^{\frac{3}{2}-2\beta}_{2,1}}\ge C\left(\|a\|^{h}_{\dot B^{\frac{1}{2}}_{2,1}}\right)^{\frac{3+2\sigma}{3+2\sigma-4\beta}},\quad \|u\|^{h}_{\dot B^{\frac{3}{2}}_{2,1}}\ge C\left(\|u\|^{h}_{\dot B^{\frac{3}{2}-2\beta}_{2,1}}\right)^{\frac{3+2\sigma}{3+2\sigma-4\beta}}.
\end{align}
Combining (\ref{420})-(\ref{422}), we have
\begin{align*}
\frac{d}{dt}E+CE^{\frac{3+2\sigma}{3+2\sigma-4\beta}}\le 0.
\end{align*}
A direct calculation yields
\begin{align}
E\le C(1+t)^{-\frac{3+2\sigma-4\beta}{4\beta}}.
\end{align}
Hence, we deduce that
\begin{align}
\|(a,u)\|_{\dot B^{\frac{3}{2}-2\beta}_{2,1}}\le C(1+t)^{-\frac{3+2\sigma-4\beta}{4\beta}}.
\end{align}
Therefore, for any $-\sigma< s\le \frac{3}{2}-2\beta$, 
\begin{align}
\|(a,u)\|_{\dot B^{s}_{2,1}}\le &C\left(\|(a,u)\|_{\dot B^{\frac{3}{2}-2\beta}_{2,\infty}}\right)^{\frac{2s+2\sigma}{3-4\beta+2\sigma}}\left(\|(a,u)\|_{\dot B^{-\sigma}_{2,\infty}}\right)^{\frac{3-4\beta-2s}{3-4\beta+2\sigma}}\\\nonumber
\le &C(1+t)^{-\frac{s+\sigma}{2\beta}}\nonumber.
\end{align}
Moreover, we obtain
\begin{align}
\|\Lambda^s(a,u)\|_{L^2}\le C(1+t)^{-\frac{s+\sigma}{2\beta}}\nonumber.
\end{align}
As a consequence, we complete the proof of Theorem \ref{theo2}.
\hfill\qedsymbol
 \vskip .1in
 \noindent{\bf Acknowledgement} Authors would like to thank Professor Jiang Xu for the suggestion of this topic. This work was partially supported by the National Natural Science Foundation of
  China (No.12571261).

 \vskip .1in
\noindent{\bf Data Availability Statement} Data sharing is not applicable to this article as no
data sets were generated or analyzed during the current study.

\vskip .1in

\noindent{\bf Conflict of Interest} The authors declare that they have no conflict of interest. The
authors also declare that this manuscript has not been previously published, and
will not be submitted elsewhere before your decision.

\phantomsection
\bibliographystyle{abbrv}
\bibliography{Manuscript1209}


  

\end{document}